\newtheorem{theorem}{Theorem}[section]
\newtheorem{lemma}[theorem]{Lemma}
\newtheorem{prop}[theorem]{Proposition}
\newtheorem{defi}[theorem]{Definition}
\newtheorem{rem}[theorem]{Remark\/}
\newtheorem{ex}{Example\/}
\newcommand{\R}{\mathbb{R}}
\newcommand{\dsum}{\displaystyle\sum}
\numberwithin{equation}{section}
\begin{document}

\title[Singular anisotropic elliptic equations]{Singular anisotropic elliptic equations with gradient-dependent lower order terms}

\author{Barbara Brandolini}
\address[Barbara Brandolini]{Dipartimento di Matematica e Informatica, Universit\`a degli Studi di Palermo, via Archirafi 34, 90123 Palermo, Italy}
\email{barbara.brandolini@unipa.it}

\author{Florica C. C\^irstea}
\address[Florica C. C\^irstea]{School of Mathematics and Statistics, The University of Sydney,
	NSW 2006, Australia}
\email[Corresponding author]{florica.cirstea@sydney.edu.au}

\keywords{Leray--Lions operators; anisotropic operators; boundary singularity; summable data}

\subjclass[2010]{ 35J75, 35J60, 35Q35}

\begin{abstract}
We  prove 
the existence of weak solutions for a general class of Dirichlet anisotropic elliptic problems 
of the form $$\mathcal A u+\Phi(x,u,\nabla u)=\Psi(u,\nabla u)+\mathfrak Bu +f $$ on a bounded open subset $\Omega\subset \mathbb R^N$ $(N\geq 2)$, where
$f\in L^1(\Omega)$ is arbitrary. 
Our models are
$ \mathcal Au=-\sum_{j=1}^N \partial_j (|\partial_j u|^{p_j-2}\partial_j u)$ and
$\Phi(u,\nabla u)=\left(1+\sum_{j=1}^N \mathfrak{a}_j |\partial_j u|^{p_j}\right)|u|^{m-2}u$,  
with
$m,p_j>1$, $\mathfrak{a}_j\geq 0$ for $1\leq j\leq N$ and $\sum_{k=1}^N (1/p_k)>1$. The main novelty is the inclusion of a possibly singular gradient-dependent term $\Psi(u,\nabla u)=\sum_{j=1}^N |u|^{\theta_j-2}u\, |\partial_j u|^{q_j}$, where $\theta_j>0$ and $0\leq q_j<p_j$ for $1\leq j\leq N$. 
Under suitable conditions, we prove the existence of solutions 
by distinguishing two cases:
1) for every $1\leq j\leq N$, we have $\theta_j> 1$
and   
2) there exists $1\leq j\leq N$ such that $\theta_j\leq 1$. 
In the latter situation, assuming that $f \ge 0$ a.e. in $\Omega$, we obtain non-negative solutions for our problem.
\end{abstract}

\maketitle


\section{Introduction and main results}\label{Sec1}

This paper is a continuation of our study initiated in \cite{baci} to obtain existence of solutions for general anisotropic elliptic equations in a bounded, open subset $\Omega\subset  \mathbb R^N$ ($N\geq 2$), subject to a homogeneous Dirichlet boundary condition, $u=0$ on $\partial \Omega$. 
We impose no smoothness assumptions on the boundary of $\Omega$. The equations under consideration feature a low summability data $f\in L^1(\Omega)$, 
a lower-order term $\Phi(x,u,\nabla u)$ satisfying a ``good sign" condition, an ``anisotropic natural growth" in the gradient and no upper bound restriction in $|u|$ (see \eqref{cond1} and \eqref{etc}). The novelty of our work here, compared with \cite{baci}, consists in the introduction of a possibly singular gradient-dependent term $\Psi(u,\nabla u)$ (as in \eqref{ass}) which cannot be incorporated in $\Phi$. 
The main contribution in this paper is to show that, under suitable assumptions, our problem \eqref{eq1} admits solutions $u$ in the anisotropic Sobolev space 
$W_0^{1,\overrightarrow{p}}(\Omega)$  such that $\Phi(x,u,\nabla u)\in L^1(\Omega)$. This answers a question we raised in \cite{baci}.

Let $W_0^{1,\overrightarrow{p}}(\Omega)$ be the closure of   
$C_c^\infty (\Omega)$ (the set of smooth functions with compact support in $\Omega$)   
with respect to the norm 
$ \|u\|_{W_0^{1,\overrightarrow{p}}(\Omega)}=\sum_{j=1}^N \|\partial_j u\|_{L^{p_j}(\Omega)}$, where 
we  
assume that 
\begin{equation} \label{IntroEq0}
1<p_j\leq p_{j+1} <\infty \quad \mbox{for every }1\leq j\leq N-1 \quad  \text{and}\quad 
p<N.
\end{equation}
Here, $p$ is the harmonic mean of $p_1,\ldots, p_N$, that is, $p:=N/\sum_{j=1}^N (1/p_j)$. We write $\partial_j u$ for the partial derivative $\partial u/\partial x_j$. 
We use $W^{-1,\overrightarrow{p}'}(\Omega)$ for the dual of $W_0^{1,\overrightarrow{p}}(\Omega)$ and $\langle \cdot, \cdot \rangle$ for 
the duality between $W^{-1,\overrightarrow{p}'}(\Omega)$ and 
$W_0^{1,\overrightarrow{p}}(\Omega)$. Since $p<N$, the embedding $W_0^{1,\overrightarrow{p}}(\Omega)\hookrightarrow L^s(\Omega)$ is continuous 
for every
$s\in [1,p^\ast]$ and compact for every $s\in [1,p^\ast)$, 
where $p^\ast:=Np/(N-p)$ stands for the anisotropic Sobolev exponent (see Remark~\ref{an-sob} in the Appendix).  

Before introducing our general problem in Section~\ref{genpr} and the main results associated with it (Theorems~\ref{mainth} and \ref{teo2}), we present a model. 
For every $(t,\xi)\in \mathbb R\times \mathbb R^N$, we define 
\begin{eqnarray}
 \Phi_0(t,\xi)&=&\left(\mathfrak a_0+ \dsum_{j=1}^N \mathfrak{a}_j |\xi_j|^{p_j}\right)|t|^{m-2} t  \mbox{ with } 
m>1,\  \mathfrak a_0>0, \  \mathfrak a_j\geq  0\ \mbox{for } 1\leq j\leq N,\label{phiz}\\
 \Psi(t,\xi)&=&\dsum_{j=1}^N  |t|^{\theta_j-2}t\, |\xi_j|^{q_j}\ 
\mbox{ with } \theta_j>0\  \mbox{and}\ 
0\leq q_j<p_j \  \mbox{for all } 1\leq j\leq N.  \label{ass}
\end{eqnarray}
Let $h\in W^{-1,\overrightarrow{p}'}(\Omega)$ and $f\in L^1(\Omega)$ be arbitrary. 
The model for our problem is as follows:
\begin{equation} \label{yvc}
\left\{  \begin{aligned}
&-\sum_{j=1}^N  \partial_j (|\partial_j u|^{p_j-2}\partial_j u)+  \Phi_0(u,\nabla u)
=\Psi(u,\nabla u)+h+f\quad \mbox{in } \Omega,\\
& u\in W_0^{1,\overrightarrow{p}}(\Omega), \quad \Phi_0(u,\nabla u) \in L^1(\Omega).
\end{aligned}\right.
\end{equation} 
 
Regarding $\{\theta_j\}_{1\leq j\leq N}$, we distinguish two cases:

\vspace{0.2cm}
{\bf Case 1:} ({\em Non-singular}) For every $1\leq j\leq N$, we have $\theta_j>1$. 

{\bf Case 2:} ({\em Mildly singular}) We have $\theta_j\leq 1$ for some $1\leq j\leq N$. In this case, we will impose some restrictions, such as  $h=0$ and $f\geq 0$ a.e. in $\Omega$, to obtain non-negative solutions of \eqref{yvc}. 
\vspace{0.2cm}

\noindent To give the notion of solution of \eqref{yvc}, for $v\in W_0^{1,\overrightarrow{p}}(\Omega)\cap L^\infty(\Omega)$ and $U_0\in W_0^{1,\overrightarrow{p}}(\Omega)$, we define
\begin{equation} \label{iuv} I_{U_0}(v):=\int_{\{|U_0|>0\}} \Psi(U_0,\nabla U_0)\,v\,dx .
\end{equation}
By a solution of \eqref{yvc} we mean a function $U_0\in W_0^{1,\overrightarrow{p}}(\Omega) $, which is non-negative in Case~2, such that 
$\Phi_0(U_0,\nabla U_0) \in L^1(\Omega)$ and for every $v\in W_0^{1,\overrightarrow{p}}(\Omega)\cap L^\infty(\Omega)$, we have $I_{U_0}(v)\in \mathbb R$ and 
	\begin{equation} \label{omin} 
	 \int_\Omega \sum_{j=1}^N |\partial_j U_0|^{p_j-2}\partial_j U_0\,\partial_j v\,dx
	+ \int_\Omega \Phi_0(U_0,\nabla U_0) \,v\,dx
=I_{U_0}(v)+
\langle h,v\rangle+\int_\Omega f v\,dx.
 \end{equation}
We leverage $\Phi_0$ to get the existence of solutions of \eqref{yvc}  for every $f\in L^1(\Omega)$. This is reflected
in a (lower bound) condition on $m>1$.  
To be precise, we define 
\begin{equation} \label{np}
\begin{aligned}
& N_{\overrightarrow{\mathfrak{a}}}:=\left\{1\leq j\leq N:\  
\mathfrak{a}_j q_j=0,\ \  \frac{\theta_j p_j}{p_j-q_j} \geq p
\right\}, \\
& P_{\overrightarrow{\mathfrak{a}}}:=\left\{1\leq j\leq N :\ 
\mathfrak{a}_j q_j>0,\ 
\mathfrak{m}_j>1\right\},\quad \mbox{where } \mathfrak{m}_j:= \frac{p_j-q_j}{q_j} \left( \frac{ \theta_j p_j}{p_j-q_j} - p \right).
\end{aligned}
\end{equation}
When either $N_{\overrightarrow{\mathfrak{a}}}$ or $P_{\overrightarrow{\mathfrak{a}}}$ is non-empty, we need $m>1$ to satisfy
 \begin{equation}\label{m}
	m> \max_{j\in N_{\overrightarrow{\mathfrak{a}}}}  \frac{\theta_j p_j}{p_j-q_j}\quad  
	\text{and} \quad  m>\min\left\{\theta_j , \mathfrak{m}_j\right\} \ \  \text{for every } j\in P_{\overrightarrow{\mathfrak{a}}}.
	\end{equation} 
	
	We first illustrate our main results for the model problem in 
\eqref{yvc}.  
 
 \begin{theorem} Let \eqref{IntroEq0}--\eqref{ass} and \eqref{m} hold. 
 Let $h\in W^{-1,\overrightarrow{p}'}(\Omega)$ and $f\in L^1(\Omega)$ be arbitrary. 
 When $f\not=0$, we assume that
 $\min_{1\leq j\leq N} \mathfrak{a}_j>0$. Assume Case 1 or Case 2 and, in the latter, let $h=0$ and $f\geq 0$ a.e. in $\Omega$.
 Then, \eqref{yvc} has a solution $U_0\in W_0^{1,\overrightarrow{p}}(\Omega) $. Moreover, for $f=0$, we have that $\Phi_0(U_0,\nabla U_0)\,U_0$ and $\Psi(U_0,\nabla U_0)\,U_0$ belong to $L^1(\Omega)$ and \eqref{omin} holds for $v=U_0$. 
	 \end{theorem}

\subsection{A brief history of the problem}

To understand how our results fit within the literature, we review
what is known in the isotropic case, where the model problem is the following:
\begin{equation}\label{iso}
\left\{\begin{array}{ll}
-\Delta_p u + \lambda |u|^{m-2} u =c(u) |\nabla u|^q +f & \mbox{in } \Omega,
\\ \\
u=0 & \mbox{on } \partial \Omega.
\end{array}
\right.
\end{equation}
Here, $-\Delta_p u=-\mbox{div}\left(|\nabla u|^{p-2}\nabla u\right)$ is the $p$-Laplacian operator with $1<p<\infty$, $\lambda\ge0$, $m>1$, $q\ge0$  and $c(\cdot)$ is a continuous, non-negative function.  We start by considering $\lambda=0$, $c(\cdot)$ constant and $f$ summable enough. 
The case $0 \le q < p-1$ is well-known. Indeed, 
the existence of a solution $u$ in $W_0^{1,p}(\Omega)$ follows easily from {\em a priori} estimates, which are obtained using $u$ as a test function. 
This is part of the general theory of pseudo-monotone operators by J. Leray and J.-L. Lions (see, for example, \cite{LL}). When $f$ has low summability, the main questions appear to be solved (see, for instance, \cite{AM}, \cite{BMMP1} and the references therein). The limiting case $q=p-1$  is more difficult since the operator $-\Delta_p u- c\,|\nabla u|^q$ is not coercive for large $c$. This difficulty has been first overcome by Bottaro and Marina in \cite{BM} when $p=2$, and by various authors in the nonlinear case (see, for example, \cites{BMMP1,DVP}).

We now focus our attention on the case $p-1<q\le p$. 
 When $q=p$, the existence of a bounded weak solution is proved in \cite{BMP} when $f\in L^r(\Omega)$ with $r>N/ p$. The case $f \in L^{N/p}(\Omega)$ is treated in \cite{FM}, which shows   
that there exists a positive constant $C=C(\beta, N,p)$ such that, if $\|f\|_{L^{N/p}(\Omega)}<C$, 
 then a solution $u\in W_0^{1,p}(\Omega)$ of problem \eqref{iso} exists such that 
$ \exp\left(\frac{\beta}{p-1}|u|\right)-1\in W_0^{1,p}(\Omega).$
 Similar results are proved in the case $p-1<q<p$ (see \cites{FMe,GMP} and the references therein). The authors of \cite{AFM} consider the case $p-1<q\le p$ and look for sharp assumptions on $f$ in order to have a solution obtained as a limit of approximations (SOLA).  
 
As far as we know, the more challenging case is $q >p$: it requires a completely different approach and it appears to be largely open (see, for instance, \cite{CDLP} and the references therein).
   
The case $\lambda=0$, $c(u)=u^\alpha$ with $\alpha \ge 0$ and $p=q=2$ is considered in the paper \cite{ADP}. Among other things, the authors prove that if $\alpha>0$ and $f\ge 0$ is sufficiently small, then there exists a positive solution in $H_0^1(\Omega)$.  
In  \cite{AGPW1} (see also \cites{GPS,CLLM,ABLP}) any value of $\alpha\in \R$ and $1<q\le 2$  is allowed. The authors prove that: if $\alpha < -1/q$ and $f\in L^1(\Omega)$, then there exists a distributional solution; if $-1/q\le \alpha < 0$ and $f\in L^r(\Omega)$ with $r>N/2$, then there exists a solution in $H_0^1(\Omega)$; if $\alpha \ge 0$, then there exists a solution only if $f$ is small enough. 
In \cite{GM} the presence of an absorption term, which corresponds to $\lambda>0$ and $m=2$, is used to prove the existence of a bounded solution in $H^1_{\rm{loc}}(\Omega)$ when $\alpha<0$, $p=q=2$ and $f$ is a bounded, non-negative function.

Sharp {\em a priori} estimates for solutions to anisotropic problems with $\lambda=0$ and $c \equiv 0$ have been proved by Cianchi \cite{C} (see also \cites{AdBF1,AdBF2})  by introducing a convenient notion of rearrangement satisfying an anisotropic version of the P\'olya-Szeg\"o  principle. For other results on anisotropic problems we refer the interested reader to the recent papers \cite{ACCZ,CV,dBFZ,FVV,FGK,FGL,GLR}.

We end this section by recalling the paper \cite{BO} (see also \cites{BCT,BFM,GMM} and \cite{LM} for the anisotropic equivalent), where the Dirichlet homogeneous problem relative to the equation $-\Delta u=f/u^\alpha$ is considered. The authors distinguish three cases:
$0<\alpha<1$, $\alpha=1$ and $\alpha>1$. The first two cases can be treated using approximation techniques and providing the existence of a unique solution in $H_0^1(\Omega)$. The validity of a strong comparison principle is a fundamental tool in order to prove the monotonicity, and also a  uniform bound far from zero, of the sequence of solutions of the approximate problems. We stress that this kind of arguments cannot be generalized to the anisotropic setting because of the lack of a strong maximum principle (see \cite{V}).

\subsection{Our general problem} \label{genpr}
We remark that the principal part in \eqref{yvc} is the 
anisotropic $\overrightarrow{p}$-Laplacian operator $\mathcal A u=-\sum_{j=1}^N  \partial_j (|\partial_j u|^{p_j-2}\partial_j u)$. It is the prototype
of a coercive, bounded, continuous and pseudo-monotone operator $\mathcal A:W_0^{1,\overrightarrow{p}}(\Omega)\to W^{-1,\overrightarrow{p}'}(\Omega)$ in divergence form 
\begin{equation} \label{ynz} \mathcal Au= -\sum_{j=1}^N \partial_j (A_j(x,u,\nabla u)).\end{equation} 
In this paper, we give existence results for general singular anisotropic elliptic problems such as 
\begin{equation} \label{eq1}
\left\{ \begin{aligned}
& \mathcal A u+ \Phi(x,u,\nabla u)+\Theta(x,u,\nabla u)=\Psi(u,\nabla u)+ \mathfrak{B} u+f\quad& \mbox{in } \Omega,\\ 
& u\in W_0^{1,\overrightarrow{p}}(\Omega), \quad \Phi(x,u,\nabla u)\in L^1(\Omega),
\end{aligned} \right.
\end{equation}
where $f\in L^1(\Omega)$ and $\mathcal A$ is as in \eqref{ynz} with $A_j(x,t,\xi):\Omega\times \mathbb R\times \mathbb R^N\to  \mathbb R$ a Carath\' eodory function for 
each $1\leq j\leq N$ (that is, measurable on $\Omega$ for every $(t,\xi)\in  \mathbb R\times \mathbb R^N$ and continuous in 
$t,\xi$ for a.e. $x\in \Omega$). Moreover, $\Phi(x,t,\xi),\, \Theta(x,t,\xi):\Omega\times \mathbb R\times \mathbb R^N\to \mathbb R$ are also Carath\' eodory functions. For any $r>1$, let $r'=r/(r-1)$ be the conjugate exponent of $r$. 

The conditions on $\mathcal A$, $\Phi$ and $\Theta$ are similar to those in \cite{baci}. 
We assume that there exist constants $\nu_0,\nu>0$ and non-negative functions  
$\eta_j\in L^{p_j'}(\Omega)$ for $1\leq j\leq N$ 
such that for a.e. $x\in \Omega$, for all $t \in \R$ and every $\xi,\widehat \xi \in \R^N$, we have
\begin{equation} \label{ellip}
\left.\begin{aligned}
& \sum_{i=1}^N A_i(x,t,\xi) \,\xi_i\geq \nu_0\sum_{i=1}^N |\xi_i|^{p_i} && \mbox{[coercivity]},&\\
& \sum_{i=1}^N\left(A_i(x,t,\xi)- A_i(x,t,\widehat \xi) \right)\left(\xi_i-\widehat \xi_i\right)>0\quad \text{if } \xi\not=\widehat \xi && \mbox{[monotonicity]},&\\
& |A_j(x,t,\xi)|\leq \nu \left[ \eta_j(x)+|t|^{p^\ast/p_j'}+\left(\sum_{i=1}^N |\xi_i|^{p_i}\right)^{1/p_j'}
\right]&& \mbox{[growth condition]}.&
\end{aligned}\right\}
\end{equation}

\noindent We note that in the growth condition in \eqref{ellip}, we take the greatest exponent for $|t|$ regarding the anisotropic Sobolev inequalities. For the pseudo-monotonicity of $\mathcal A$, see 
\cite{baci}*{Lemma 2.7}. 

Assume that there exist a constant $C_\Theta>0$, a non-negative function $c\in L^1(\Omega)$ and a continuous non-decreasing function 
$ \phi :\mathbb R\to \mathbb R^+$ 
such that for a.e. $x\in \Omega$ and all $(t,\xi)\in  \mathbb R\times \mathbb R^N$, 
\begin{eqnarray} 
& |\Theta(x,t,\xi)|\leq C_\Theta,\quad t\,\Phi(x,t,\xi)\geq 0, \ \  
|\Phi(x,t,\xi)|\leq \phi(|t|) \left( \dsum_{j=1}^N |\xi_j|^{p_j} +c(x)\right),  \label{cond1}\\ 
& |\Phi(x,t,\xi)|\geq |\Phi_0(t,\xi)|,\quad \mbox{where } \Phi_0 \ \mbox{is as in }\eqref{phiz}.
\label{etc}
\end{eqnarray} 
Compared with \cite{baci}, we have the extra assumption \eqref{etc} to 
deal with the new term $\Psi$ in \eqref{ass}.

The operator $\mathfrak B$ in \eqref{eq1} belongs to the 
general class $\mathfrak{BC}$ 
introduced in \cite{baci}. 
By $\mathfrak{BC}$ we denote the class
of {\em  bounded} operators 
$\mathfrak{B}:W_0^{1,\overrightarrow{p}}(\Omega)\to W^{-1,\overrightarrow{p}'}(\Omega)$ 
satisfying two properties:  
\begin{itemize}
	\item[$(P_1)$]  
	The operator $ \mathcal A-\mathfrak B$ from $W_0^{1,\overrightarrow{p}}(\Omega)$ into $W^{-1,\overrightarrow{p}'}(\Omega)$ is {\em coercive}
in the sense that
$$ \frac{\langle   \mathcal Au-\mathfrak Bu,u\rangle }{\|u\|_{W_0^{1,\overrightarrow{p}}(\Omega)}}\to \infty\quad \mbox{as }\   \|u\|_{W_0^{1,\overrightarrow{p}}(\Omega)}\to \infty.
$$
	\item[$(P_2)$] If $u_\ell  \rightharpoonup u$ and $v_\ell  \rightharpoonup v$ (weakly) in $W_0^{1,\overrightarrow{p}}(\Omega)$ as $\ell\to \infty$, then
	$$\lim_{\ell\to \infty} 	\langle \mathfrak{B} u_\ell,v_\ell\rangle= \langle \mathfrak{B} u, v \rangle.$$
\end{itemize}
We recall from \cite{baci} that 
our assumption $(P_2)$ is somehow reminiscent of $(iii)$ in the Hypothesis $(II)$ of Theorem~1 in the celebrated paper \cite{LL} by Leray and Lions. 
Every operator satisfying $(P_2)$ is strongly continuous (see \cite{baci}) and thus pseudo-monotone 
(cf. \cite[p. 586]{ze}). 
However, unlike  $\mathcal A$, the operator $-\mathfrak{B}$
is not necessarily coercive (see Example~\ref{exg}). 

Let $\mathfrak{BC}_+$ be the class of operators in $\mathfrak {BC}$ satisfying the extra condition
	
$(P_3)$ For $\nu_0>0$ in the coercivity condition of \eqref{ellip} and each $k>0$, it holds 
	\begin{equation*}
	 \nu_0 \sum_{j=1}^N \|\partial_j u\|_{L^{p_j}(\Omega)}^{p_j}-\langle \mathfrak{B}u,T_k u \rangle
	\to \infty \ \mbox{as }	 \|u\|_{W_0^{1,\overrightarrow{p}}(\Omega)}\to \infty.	
	\end{equation*}
 We use $T_k$ for the truncation at height $k$, see \eqref{trun}. 
 
 To indicate that the operator $\mathcal A$ is associated with the class $\mathfrak {BC}$ and $\mathfrak{BC}_+$, respectively, 
we shall write $\mathfrak {BC}(\mathcal A)$ and $\mathfrak{BC}_+(\mathcal A)$, respectively. 
We recall from \cite{baci} examples of $\mathfrak B$ in $\mathfrak {BC}(\mathcal A)$. 

\begin{ex} \label{exg}
	{\rm Let $F\in L^{(p^\ast)'}(\Omega)$ and $h,\widetilde h\in W^{-1,\overrightarrow{p}'}(\Omega)$ be arbitrary. 
		Let $\rho,\alpha_k\in \mathbb R$ for $0\leq k\leq 4$. 
		For every $u\in W_0^{1,\overrightarrow{p}}(\Omega)$,  we define 
		
		(1) $\mathfrak{B} u =h$;
		
		(2) $\mathfrak{B} u = F+\rho\,|u|^{\vartheta-2}u$ with $1 < \vartheta <p$ if $\rho>0$ and $1<\vartheta<p^\ast$ if $\rho<0$;
		
		(3) $\mathfrak{B} u =\left(\alpha_0+\alpha_1 \|u\|^{\mathfrak{b}_1}_{L^r(\Omega)} +\alpha_2
		|\langle \widetilde h,u\rangle|^{\mathfrak{b_2}} \right) \left(
		\alpha_3 h+ \alpha_4  F  \right)$,
		where $ r\in [1,p^\ast)$; we take $ \mathfrak{b}_1\in (0,p/p_1')$ and $ \mathfrak{b}_2\in (0,p_1-1)$  if  $\alpha_3\not=0$; $ \mathfrak{b}_1\in (0,p-1)$ and $\mathfrak{b}_2\in (0,p_1/p')$ if $\alpha_3=0$;
		
		(4) $\mathfrak{B} u = -\sum_{j=1}^N \partial_j\left(\beta_j+|u|^{\sigma_j-1}u\right)$, 
		where $ \beta_j\in L^{p_j'}(\Omega)$ and $\sigma_j\in (0,p/p_j')$ 
		for all $1 \le j \le N$.	
				
In each example, $\mathfrak{B}$ belongs to the class $\mathfrak{BC} ( (1-\varepsilon)\mathcal A)$ for every $\varepsilon\in [0,1)$.}
\end{ex}

\begin{defi} \label{newdef}
A function $U_0\in W_0^{1,\overrightarrow{p}}(\Omega)$, which is non-negative in Case~2, 
is said to be a {\em solution} of \eqref{eq1} if 
	$\Phi(x,U_0,\nabla U_0)\in  L^1(\Omega)$ and for every $v\in W_0^{1,\overrightarrow{p}}(\Omega)\cap L^\infty(\Omega)$,
	\begin{eqnarray} 
	&& \hskip -1.5cm S_{U_0,\Theta,f}(v)=\langle \mathfrak B U_0,v\rangle \quad \mbox{if } \Psi=0, \label{sesim}
	\\
	&& \hskip -1.5cm 
	S_{U_0,\Theta,f}(v)=\langle \mathfrak B U_0,v\rangle+I_{U_0}(v)  \quad \mbox{if } \Psi\not=0 , \label{ff1}
	\end{eqnarray} 
where $I_{U_0}(v)$ and $S_{U_0,\Theta,f}(v)$ are given respectively by \eqref{iuv} and
$$  S_{U_0,\Theta,f}(v):=\langle \mathcal A U_0,v\rangle+ \int_\Omega \Phi(x,U_0,\nabla U_0) \,v\,dx+\int_\Omega \Theta(x,U_0,\nabla U_0)\,v\,dx-\int_\Omega f v\,dx.
$$
\end{defi}
To simplify the notation, we have not included $\mathcal A$ and $\Phi$ in the symbol $S_{U_0,\Theta,f}(v)$. When $f=0$, we simply write
$S_{U_0,\Theta}(v)$ instead of $S_{U_0,\Theta,f}(v)$.

Assuming \eqref{ellip} and \eqref{cond1}, we have shown in \cite{baci}*{Theorem 1.3} that  when $\Psi=0$ and $f=0$, then \eqref{eq1} has a solution $U_0$ for every $\mathfrak B$ in the class $\mathfrak {BC}(\mathcal A)$. 
Moreover, $\Phi(x,U_0,\nabla U_0) \,U_0\in L^1(\Omega)$ and 
 \eqref{sesim} holds for $v=U_0$. If, in addition, there exist constants $l,\gamma>0$ such that 
 \begin{equation} \label{}
 |\Phi(x,t,\xi)|\geq \gamma \sum_{j=1}^N |\xi_j|^{p_j}\quad \mbox{for all } |t|\geq l,\ \mbox{a.e. } x\in \Omega\ \mbox{and all } \xi\in \mathbb R^N,
 \end{equation}
 then \eqref{eq1} with $\Psi=0$ has at least a solution for every $f\in L^1(\Omega)$ and $\mathfrak B$ in the class $\mathfrak {BC}_+(\mathcal A)$. 
 
In this paper, under suitable hypotheses, 
 we prove the existence of solutions for \eqref{eq1} with $\Psi$ in \eqref{ass} (see Theorems~\ref{mainth} and \ref{teo2} below).  Let $v^\pm=\max\{\pm v,0\}$ be the positive and negative parts of $v$. 
In Case 2, we 
look for non-negative solutions of \eqref{eq1} and assume, in addition, that 
\begin{equation} \label{bno}
\left.\begin{aligned}
&\langle \mathfrak{B} v,v^-\rangle\geq 0, \quad  
\langle  \mathfrak{B} w, z\rangle\geq 0 \ \text{for all }
v,w,z\in W_0^{1,\overrightarrow{p}}(\Omega)\ \mbox{with } w,z\geq 0,\\
& f(x)\geq 0,\quad \Theta(x,t,\xi)\leq 0 \quad \mbox{for a.e. } x\in \Omega \ \mbox{and all } (t,\xi)\in \mathbb R\times \mathbb R^N, \\
& \Phi(x,0,0)=0\quad \mbox{and}\quad A_j(x,0,0)=0\ \mbox{a.e. }x\in \Omega,\ \mbox{for all } 1\le j\le N.  
\end{aligned}\right\}
\end{equation} 
  Without further mention, we henceforth understand that \eqref{bno} holds whenever Case 2 occurs.

 Our main results are stated below. 

\begin{theorem} \label{mainth} 
	Let \eqref{IntroEq0}, \eqref{ass}, \eqref{m}, and \eqref{ellip}--\eqref{etc} hold. Let $f=0$ in \eqref{eq1}. Suppose that $\mathfrak B$ belongs to the class $\mathfrak{BC}((1-\varepsilon)\mathcal A)$ for some $\varepsilon\in (0,1)$. 	
	Assume Case~1 or Case~2.  
	Then, there exists a solution $U_0\in W_0^{1,\overrightarrow{p}}(\Omega) \cap L^m(\Omega)$ of \eqref{eq1}. Moreover, both $\Phi(x,U_0,\nabla U_0)\,U_0$ and $\Psi(U_0,\nabla U_0) \,U_0$ belong to 
	 $L^1(\Omega)$ and \eqref{ff1} holds for $v=U_0$.
\end{theorem}

When $N_{\overrightarrow{\mathfrak{a}}}\cup P_{\overrightarrow{\mathfrak{a}}}=\emptyset$, then Theorem~\ref{mainth} gives that 
\eqref{eq1} admits a solution for {\em every} $m>1$. 

If in the framework of Theorem~\ref{mainth}, we have $\min_{1\leq j \leq N} \mathfrak a_j>0$ (in relation to \eqref{etc}), then we obtain the existence of solutions for \eqref{eq1} for every $f\in L^1(\Omega)$ and $\mathfrak B$ in the class $\mathfrak{BC}_+((1-\varepsilon)\mathcal A)$ for some $\varepsilon\in (0,1)$. More precisely, we prove the following result.

\begin{theorem} \label{teo2} 
	Let \eqref{IntroEq0}, \eqref{ass}, \eqref{m} and \eqref{ellip}--\eqref{etc} hold and, in addition, $\min_{1\leq j \leq N} \mathfrak a_j>0$. 
	Let $f\in L^1(\Omega)$. Suppose that $\mathfrak B$ belongs to the class $\mathfrak{BC}_+((1-\varepsilon)\mathcal A)$ for some $\varepsilon\in (0,1)$. 	
	Assume Case~1 or Case~2. Then, \eqref{eq1} has at least a solution  $u_0\in W_0^{1,\overrightarrow{p}}(\Omega) $. 
\end{theorem}

\subsection{ Notation} As usual, in the following sections, we will denote by $C$ a positive constant, the value of which can change from line to line. 
\noindent For $k>0$, we let $T_k:\mathbb R\to \mathbb R$ stand for the truncation at height $k$, that is,  
\begin{equation} \label{trun} T_k(s)=s \quad \mbox{if } |s| \le k, \quad T_k(s)=k\, \frac{s}{|s|} \quad \mbox{if }  |s|>k.\end{equation}
\noindent Moreover, we define $G_k:\mathbb R\to \mathbb R$ by  
\begin{equation}\label{gk}
G_k(s)=s-T_k(s)\quad \mbox{for every }s\in \R,
\end{equation}
 so that $G_k=0$ on $[-k,k]$.  

\noindent For every $u\in  W_0^{1,\overrightarrow{p}}(\Omega)$ and 
for a.e. $x\in \Omega$, we define  
\begin{equation*} \label{zeta}  
\begin{aligned}
& A_j(u)(x):=A_j(x,u(x),\nabla u(x))\quad \text{for every } 1\leq j\leq N, \\
& \Phi(u)(x):=\Phi(x,u(x),\nabla u(x)),\quad \Phi_0(u)(x)=\Phi_0(u(x),\nabla u(x)), \\
&  \Theta(u)(x):=\Theta(x,u(x),\nabla u(x)), \quad \Psi(u)(x):=\Psi(u(x),\nabla u(x)).
\end{aligned}
\end{equation*}
For $u,v,w\in W_0^{1,\overrightarrow{p}}(\Omega)$, we introduce $\mathcal E_u (v,w)$ as follows
\begin{equation} \label{muz}   
\mathcal E_u(v,w):=\sum_{j=1}^N \left[A_j(x,u,\nabla v) -A_j(x,u,\nabla w)\right] \,\partial_j(v-w).
\end{equation}

\noindent We set $\overrightarrow{p}=\left(p_1,p_2,\ldots,p_N\right)$ and 
$\overrightarrow{p}'=(p_1',p_2',\ldots,p_N')$. 

\noindent As usual, $\chi_\omega$ stands for the characteristic function of a set $\omega\subset \mathbb R^N$.

\subsection{Strategy for the proof of Theorems~\ref{mainth} and \ref{teo2}} \label{strat}
We first take $f=0$ in \eqref{eq1} and in the framework of Theorem~\ref{mainth}, we obtain a solution $U_0$ 
(with additional properties that $\Phi(U_0)\, U_0\in L^1(\Omega)$ and 
$\Psi(U_0) \,U_0\in L^1(\Omega)$, allowing us to take $v=U_0$ in \eqref{ff1}). The difficulty in our analysis arises from the interaction of the absorption term $\Phi$ with 
the gradient-dependent lower order term $\Psi$. We point out that $\Psi$ cannot be integrated into $\Phi$ since they have the same sign but appear in the opposite sides of \eqref{eq1}. 
Moreover, $\Psi(u)$ is not part of $\mathfrak B u$ either (except in very special cases such as $q_j=0$ and $1<\theta_j<p$ for all $1\leq j\leq N$). 
Hence, we cannot tackle $\Psi(u)$ directly in the framework of our paper \cite{baci}. We overcome this obstacle by approximating $\Psi(u)$ by {\em bounded} functions $\Psi_n(u)$ with $\|\Psi_n(u)\|_{L^\infty(\Omega)}\leq Nn$ for every $n\geq 1$ (see Section~\ref{appr}).  

We consider a sequence of approximate problems corresponding to \eqref{eq1} with $f=0$ and $\Psi$ replaced by $\{\Psi_n\}_{n\geq 1}$. Then, for each $n\geq 1$, by applying Theorem 1.3 in \cite{baci}, we obtain the existence of a solution $U_n\in W_0^{1,\overrightarrow{p}}(\Omega) \cap L^m(\Omega)$ for the approximate problem
\begin{equation}  \label{e1b}
\left\{\begin{aligned}
& \mathcal A U  + \Phi(U)+\Theta(U)=\Psi_n (U)+\mathfrak{B} U \quad 
\mbox{in } \Omega,\\
& U\in W_0^{1,\overrightarrow{p}}(\Omega), \quad \Phi(U)\in L^1(\Omega).
\end{aligned} \right.
\end{equation}
Moreover, $U_n$ is non-negative in Case~2 in view of the hypothesis \eqref{bno} (see Lemma  \ref{pol}).
We capture the properties of $U_n$ in Proposition~\ref{red} to be proved in Section~\ref{sec3}. 
We are able to get a suitable upper bound for $\int_\Omega \Psi(U_n) \,U_n\,dx$ via Lemma~\ref{intorj}. 
To show that $ \{U_n\}_n$ is bounded in $W_0^{1,\overrightarrow{p}}(\Omega)$ and also in $L^m(\Omega)$, we rely on \eqref{m} and the property $(P_1)$ of $\mathfrak{B}$ in the class 
$\mathfrak{BC} ((1-\varepsilon)\,\mathcal A)$. Hence, up to a subsequence, $\{U_n\}_{n\geq 1}$ converges weakly in both $W_0^{1,\overrightarrow{p}}(\Omega)$ and $L^m(\Omega)$ 
to a function $U_0\in W_0^{1,\overrightarrow{p}}(\Omega)\cap L^m(\Omega)$. It turns out that $U_0$ is a good candidate for a solution of \eqref{eq1}. 
In addition to $\Psi_n (U_n) $, we need to handle another gradient-dependent 
term, namely, $\Phi(U_n)$. 
To deal with these terms, we show in Proposition~\ref{red2} that, up to a subsequence, 
\begin{equation} \label{oxx} U_n\to U_0\ \mbox{(strongly) in } W_0^{1,\overrightarrow{p}}(\Omega)\ \mbox{as } n\to \infty.
\end{equation}
To prove \eqref{oxx}, it is enough to show that 
for a subsequence of $\{U_n\}_n$, we have
 \begin{equation} \label{alu} 
  \mathcal E_{U_n}(U_n^\pm,U_0^\pm):=\sum_{j=1}^N [A_j(x,U_n,\nabla U_n^\pm) -A_j(x,U_n,\nabla  U_0^\pm)] \,\partial_j(U_n^\pm-U_0^\pm)
\to 0\quad \mbox{in } L^1(\Omega)
\end{equation}
as $ n\to \infty$. 
Indeed, 
from \eqref{alu} we obtain  that, up to a subsequence,
$\nabla U_n^\pm\to \nabla U_0^\pm$ a.e. in $\Omega$ and 
 $U_n^\pm \to U_0^\pm$ (strongly) in $ W_0^{1,\overrightarrow{p}}(\Omega)$ as $n\to \infty$. 
 For details, see Lemma~\ref{cori} in the Appendix.

Broadly speaking, the proof of \eqref{alu} is inspired by the approach in the celebrated paper \cite{BBM} dealing with Leray--Lions operators from $W_0^{1,p}(\Omega)$ into $W^{-1,p'}(\Omega)$. 
We point out that, in our case, the analysis becomes more technically involved given the anisotropic setting with the modified growth condition in \eqref{ellip} and the inclusion of $\mathfrak B$ and $\Psi$. 
Based on the property $(P_2)$ of $\mathfrak{B}$ and a careful
use of the absorption term, we show that $\limsup_{n\to \infty} \|G_k(U_n)\|_{W_0^{1,\overrightarrow{p}}(\Omega)}\leq W_k$, where $\lim_{k\to \infty} W_k=0$, see Lemma~\ref{step3}. This is an essential tool not only in the proof of \eqref{alu} but also in that of 
Lemma~\ref{cori} (see Remark~\ref{copp}). The technical details in the proof of Proposition~\ref{red2} are deferred to Section~\ref{redpro}.

Then, by Propositions~\ref{red} and \ref{red2}, we can apply Vitali's Theorem to obtain that 
\begin{equation} \label{c42} \Phi(U_n)\to \Phi(U_0)\quad\mbox{ in } L^1(\Omega)\quad\mbox{ as }n\to \infty.\end{equation} 
We end the proof of Theorem~\ref{mainth} by showing that, up to a subsequence of $U_n$, we have  
\begin{equation} \label{c4}
I_{U_0}(v)=\lim_{n\to \infty}\int_\Omega \Psi_n (U_n)\,v\,dx=
S_{U_0,\Theta}(v)
-\langle\mathfrak{B} U_0, v\rangle
\end{equation} 
for every $ v \in W_0^{1,\overrightarrow{p}}(\Omega)\cap L^\infty(\Omega)$. 
For details see Section 3.

We remark that it is possible to make the proof of Theorem~\ref{mainth} work with only the strong convergence in $W_0^{1,\overrightarrow{p}}(\Omega)$ for 
 the truncations $T_k(U_n)$, namely, proving that up to a subsequence, 
 \begin{equation} \label{repo} T_k(U_n)\to T_k(U_0)\ \mbox{in } W_0^{1,\overrightarrow{p}}(\Omega)\ \mbox{ as } n\to \infty,\ \mbox{ for every } k\geq 1. \end{equation} 

It is this latter strategy that we adopted in our paper \cite{baci} for $\Psi=0$ (inspired by \cite{BGM}), first to obtain the existence of solutions for $f=0$ and then building upon it also for $f\in L^1(\Omega)$. But unlike 
 Theorem~\ref{mainth}, the approximation argument for $f=0$ in \cite{baci} concerned the absorption term $\Phi$.

For Theorem~\ref{teo2} dealing with a low summability term $f\in L^1(\Omega)$, we use a well-known approximation: we replace $f$ in \eqref{eq1} by a sequence $\{f_n\}_{n\geq 1}$ of $L^\infty(\Omega)$-functions such that
$|f_n|\leq |f|$ for each $n\geq 1$ and $f_n\to f$ in $L^1(\Omega)$ as $n\to \infty$. For the approximate problem, we use Theorem~\ref{mainth} to gain a solution $u_n\in W_0^{1,\overrightarrow{p}}(\Omega)$. 
The additional assumption $\min_{1\leq j \leq N} \mathfrak a_j>0$ and the extra property $(P_3)$ for $\mathfrak B$ in $\mathfrak{BC}_+((1-\varepsilon)\mathcal A)$ are needed to obtain in Proposition~\ref{lem-ad} that the solutions $u_n$ are uniformly bounded in $W_0^{1,\overrightarrow{p}}(\Omega)$ with respect to $n$. Since here we test the approximate problem with $T_k(u_n)$ (and not $u_n$, which is potentially unbounded), we can only derive that $\{\Phi(u_n)\}_n$ (and not $\{\Phi(u_n)\,u_n\}_{n\geq 1}$)  is uniformly bounded in $L^1(\Omega)$ uniformly with respect to $n$. However, this suffices to get that $\Phi(u_0)\in L^1(\Omega)$, where $u_0$ is the weak limit in $W_0^{1,\overrightarrow{p}}(\Omega)$ of (a subsequence of) $\{u_n\}_{n\geq 1}$. 
In Proposition~\ref{st-con}, we establish 
the analogue of \eqref{repo}. To this end, we use \cite{baci}*{Lemma~A.5} (and a diagonal argument) to reduce the proof to showing that for every $k\geq 1$ and, up to subsequence, 
 \begin{equation} \label{alt}  \mathcal E_{u_n}(T_k(u_n),T_k(u_0))\to 0\quad \mbox{ in } L^1(\Omega)\ \mbox{ as }n\to \infty. \end{equation}
(For the definition of $\mathcal E_u$, see \eqref{muz}.) To prove \eqref{alt}, we adapt 
the approach in our paper \cite{baci} 
by testing the approximate problem with $$v=(T_k(u_n)-T_k(u_0))\,\exp\left(\lambda\, (T_k(u_n)-T_k(u_0))^2 \right)$$ for $\lambda=\lambda(k)>0$ large enough. The new ingredient here corresponds to getting a good control of $I_{u_n}$ for this test function (see Lemma~\ref{lema74}).

Bearing in mind
the strong convergence    
of $T_k(u_n)$ to $T_k(u_0)$ in $W_0^{1,\overrightarrow{p}}(\Omega)$ as $n\to \infty$, we can obtain the analogue of \eqref{c42} and then pass to the limit in the approximate problem 
to obtain suitable counterparts of \eqref{c4}  (see Lemma~\ref{lemi} for details). Putting together the above results, we conclude that $u_0\in W_0^{1,\overrightarrow{p}}(\Omega)$ is a solution of \eqref{eq1}.

\subsection{Structure of the paper} In Section \ref{appr} we consider the sequence of approximate problems \eqref{e1b} and we establish the existence of solutions, which are non-negative in Case 2. We state the {\it a priori} estimates and the strong convergence of such solutions  in $W_0^{1,\overrightarrow{p}}(\Omega)$, deferring their proofs to Section \ref{sec3} and \ref{redpro}, respectively. Based on these properties, we complete the proof of Theorem \ref{mainth} in Section \ref{sec-th1}. In Section \ref{sec5} we include several results that are invoked in Section \ref{redpro}.  Section \ref{sec7} contains the proof of Theorem \ref{teo2}. For the reader's convenience, in the Appendix we present some details which are modifications of  arguments known in the literature or already contained in our recent paper \cite{baci}.


\section{Approximate problems} \label{appr}

We always assume that \eqref{IntroEq0}, \eqref{ass}, \eqref{ellip} and \eqref{cond1} hold. Unless otherwise stated, we also understand that $\Phi$ satisfies \eqref{etc} and $\mathfrak B$ belongs to the class $\mathfrak{BC} ((1-\varepsilon)\mathcal A)$ for some $\varepsilon\in (0,1)$ (see Section~\ref{appre} below for an exception). We first take $f=0$ in \eqref{eq1}. 

\subsection{Setting up the approximation}
We introduce the sets 
$$ J_1:=\{ 1\leq j\leq N:\quad  \theta_j>1 \},\quad 
J_2:=\{ 1\leq j\leq N:\quad 0< \theta_j \leq 1\}. 
$$
Case~1 (respectively, Case 2) in Theorem~\ref{mainth} corresponds to $J_2=\emptyset$ (respectively, $J_2\ne \emptyset$).  

 Let $n\geq 1$ be arbitrary. For each $1\leq j\leq N$, we define 
\begin{equation} \label{foa} H_{j,n}(t_1,t_2)= 
 \frac{|t_1|^{\theta_j-2}t_1\, |t_2|^{q_j}} 
{ 1+\frac{1}{n} |t_1|^{\theta_j-1} |t_2|^{q_j}}\quad \mbox{for every } (t_1,t_2)\in (0,\infty)\times \mathbb R. 
\end{equation}

In Case 1, for each $1\leq j\leq N$, we extend $H_{j,n}(t_1,t_2)$ on $(-\infty,0]\times \mathbb R$ with the same formula as in \eqref{foa}. 
In Case~2, for each 
$j\in J_1$ (when $J_1$ is not empty), we extend $H_{j,n}(t_1,t_2)$ on $(-\infty,0]\times \mathbb R$ so that it becomes an even function in the first variable. 

In Case 1 or Case 2, we define 
$\Psi_n$ 
from $W_0^{1,\overrightarrow{p}}(\Omega)$ into $ L^\infty(\Omega)$ as follows
\begin{equation} \label{psi}
\Psi_n(u):=  \Psi_{n,J_1}(u)+\Psi_{n,J_2}(u),
\end{equation}
where $\Psi_{n,J_1}(u)$ and $\Psi_{n,J_2}(u)$ are functions from $\Omega$ to $\mathbb R$ given by 
\begin{equation} \label{psia}   \Psi_{n,J_1}(u)(x):= \sum_{j\in J_1} H_{j,n}(u(x),\partial_j u(x)), \quad 
\Psi_{n,J_2}(u)(x):=\sum_{j\in J_2} H_{j,n}\left(|u(x)|+1/n,\partial_j u(x)\right).
\end{equation}

Clearly, $\Psi_n(u)\in L^\infty(\Omega)$ for every $u\in W_0^{1,\overrightarrow{p}}(\Omega)$ and  
$ \|\Psi_{n}(u)\|_{L^\infty(\Omega)}\le N n$. 

As explained in Section~\ref{strat}, we consider the approximate problem \eqref{e1b}.

\subsection{Existence of solutions for \eqref{e1b}} \label{appre}
We point out that for the existence of solutions of \eqref{e1b}, we do not need $\Phi$ to satisfy \eqref{etc}. Moreover, the operator $\mathfrak B$ can be taken in the class $\mathfrak{BC} (\mathcal A)$ (rather than $\mathfrak{BC} ((1-\varepsilon)\mathcal A)$ for some $\varepsilon\in (0,1)$).

\begin{lemma} \label{pol} 
Let \eqref{IntroEq0}, \eqref{ass}, \eqref{ellip} and \eqref{cond1} hold. Suppose that $\mathfrak B$ belongs to the class $\mathfrak{BC}(\mathcal A)$.  
	Assume Case 1 or Case 2. 
	For every $n\geq 1$, problem \eqref{e1b} admits a solution $U_n$, which in addition satisfies $\Phi(U_n)\,U_n\in L^1(\Omega)$ and  
	\begin{equation} \label{e22v}
	S_{U_n,\Theta}(U_n)
	=\int_\Omega 
	\Psi_n (U_n)\,U_n\,dx +\langle \mathfrak{B}U_n,U_n\rangle .
	\end{equation}  
	Moreover, in Case 2, we have $U_n\geq 0$ a.e. in $\Omega$.
	\end{lemma}

\begin{proof} By applying Theorem~1.3 in \cite{baci}, with $\Theta$ there replaced by $\Theta-\Psi_n$, we obtain that \eqref{e1b} has a solution $U_n$ (in the sense of Definition \ref{newdef} with $\Psi=0$), 
 satisfying 
 \begin{equation} \label{fff2}
S_{U_n,\Theta}(v)=
\int_\Omega \Psi_n(U_n)\,v\,dx +\langle \mathfrak{B}U_n,v\rangle \qquad \mbox{ for all } \ v \in W_0^{1,\overrightarrow{p}}(\Omega) \cap L^\infty(\Omega).
\end{equation} 
Moreover, $\Phi(U_n)\,U_n\in L^1(\Omega)$ and \eqref{e22v} holds. 
We now show that in   
Case 2, we have $U_n\geq 0$ a.e. in $\Omega$. 
Since $U_n^-$ may not be in $L^\infty(\Omega)$, we cannot directly use $v=U_n^-$ in \eqref{fff2}. 
However, for every $k>0$, we have $T_k(U_n^-)\in W_0^{1,\overrightarrow{p}}(\Omega) \cap L^\infty(\Omega)$. Hence, by taking $v=T_k (U_n^-)$ in \eqref{fff2}, we obtain that 
\begin{equation} \label{laci}  
S_{U_n,\Theta}(T_k(U_n^-))
=
\int_\Omega \Psi_n(U_n)\,T_k(U_n^-)\,dx+\langle \mathfrak{B}U_n,T_k(U_n^-)\rangle.
\end{equation}
Notice that $\|T_k (U_n^-)\|_{W_0^{1,\overrightarrow{p}}(\Omega)}\leq \|U_n^-\|_{W_0^{1,\overrightarrow{p}}(\Omega)}$ for all $k>0$. 
Moreover,  $\partial_j (T_k (U_n^-))\to \partial_j U_n^-$ a.e. in $\Omega$ as $k\to \infty$, for every $1\leq j\leq N$,  so that 
$  T_k (U_n^-)\rightharpoonup U_n^-$ (weakly) in $W_0^{1,\overrightarrow{p}}(\Omega)$ as $k\to \infty.$ 
Since $\mathcal A U_n$ and $\mathfrak{B} U_n$ belong to $W^{-1,\overrightarrow{p}'}(\Omega)$, it follows that 
\begin{equation*} \label{tas1} 
\lim_{k\to \infty} \langle \mathcal AU_n, T_k (U_n^-)\rangle = \langle \mathcal AU_n, U_n^-\rangle \quad \mbox{and } \quad \lim_{k \to \infty} \langle \mathfrak{B}U_n,T_k(U_n^-)\rangle=\langle \mathfrak{B} U_n, U_n^-\rangle. 
\end{equation*}
Recalling that $\Phi(U_n)\, U_n \in L^1(\Omega)$, $ \|\Psi_{n}(U_n)\|_{L^\infty(\Omega)}\le N n$ and \eqref{cond1} holds, from the Dominated Convergence Theorem, we can pass to the limit $k\to \infty$ in \eqref{laci} to find that 
\begin{equation} \label{nov} 
S_{U_n,\Theta}(U_n^-)=\int_\Omega \Psi_n(U_n)\,U_n^-\,dx+
\langle \mathfrak{B}U_n,U_n^-\rangle.
\end{equation}
In view of \eqref{bno}, we see that the right-hand side of \eqref{nov} is non-negative. Using also the coercivity condition in \eqref{ellip}, we infer that the left-hand side of \eqref{nov} is bounded above by 
\begin{equation} \label{qua}
-\left(\nu_0\sum_{j=1}^N\int_{\{U_n<0\}} |\partial_j U_n|^{p_j}\,dx +\int_{\{U_n<0\}} \Phi(U_n)\,U_n\, dx +\int_{\{U_n<0\}}  \Theta(U_n) \,U_n\,dx\right) .
\end{equation}
From the sign-conditions on $\Phi$ and $\Theta$ in \eqref{cond1} and \eqref{bno}, respectively, we  see that all terms contained in the round brackets of \eqref{qua} are non-negative.
Hence, $\mbox{meas}\,(\{U_n<0\})=0$ and so $U_n \ge 0$ a.e. in $\Omega$.
\end{proof}

\begin{rem} \label{8nb} If, in addition, $\Phi$ satisfies \eqref{etc}, then for the solution $U_n$ of \eqref{e1b} provided by Lemma~\ref{pol}, we have
$U_n\in L^m(\Omega)$. This follows from the property $\Phi(U_n) \,U_n\in L^1(\Omega)$. 
\end{rem}

\subsection{Strong convergence of $U_n$} 
Throughout this section, we work in the framework of Theorem~\ref{mainth}. Then, Lemma~\ref{pol} and Remark~\ref{8nb} give that for every $n\geq 1$, the approximate problem \eqref{e1b} has a solution $U_n\in W_0^{1,\overrightarrow{p}}(\Omega)\cap L^m(\Omega)$. 
In Proposition~\ref{red}  we derive essential {\em a priori} estimates in $W_0^{1,\overrightarrow{p}}(\Omega)$ and in $L^m(\Omega)$ for the sequence of solutions $\{U_n\}_{n\geq 1}$,
which up to a subsequence, converges weakly to some $U_0$ both in $W_0^{1,\overrightarrow{p}}(\Omega)$ and in $L^m(\Omega)$. 
In Proposition~\ref{red2}, we show that, up to a subsequence, $\{U_n\}_{n\ge 1}$ converges strongly to $U_0$ in $W_0^{1,\overrightarrow{p}}(\Omega)$ as $n\to \infty$, see \eqref{gtree}. We aim to prove that  $U_0$  is a solution of \eqref{eq1} with $f=0$.  
In Sections~\ref{sec3} and \ref{redpro}, respectively, we prove Propositions~\ref{red} and \ref{red2}, which are
the crux of the proof of Theorem~\ref{mainth}. 

\begin{prop} \label{red} 
Let \eqref{IntroEq0}, \eqref{ass}, \eqref{m} and \eqref{ellip}--\eqref{etc} hold. Let $f=0$. Suppose that $\mathfrak B$ belongs to the class $\mathfrak{BC}((1-\varepsilon)\mathcal A)$ for some $\varepsilon\in (0,1)$. Assume Case~1 or Case~2. 
\begin{itemize} 
\item[(a)] There exists a constant $C>0$ such that for every $n\geq 1$, the solution 
$U_n$ given by Lemma~\ref{pol} satisfies
\begin{equation}\label{e2} 
\|U_n\|_{W_0^{1,\overrightarrow{p}}(\Omega)} + \|U_n\|_{L^m(\Omega)} +
  \int_\Omega \Phi(U_n)\,U_n\, dx+
 \int_\Omega \Psi(U_n)\,U_n\, dx \leq C.
\end{equation}
\item[(b)] There 
exists $U_0 \in W_0^{1,\overrightarrow{p}}(\Omega)\cap L^m(\Omega)$ such that, up to a subsequence, 
\begin{equation} \label{uni} 
\begin{aligned}
&U_n \rightharpoonup U_0\ \mbox{(weakly) both in }  W_0^{1,\overrightarrow{p}}(\Omega)\ 
\mbox{and in } L^m(\Omega)\ \mbox{as } n\to \infty,
\\
&U_n \to U_0 \mbox{ a.e. in }\Omega\ \mbox{as } n\to \infty.
\end{aligned}
\end{equation}

\end{itemize}
\end{prop}

\begin{prop} \label{red2} In the framework of Proposition~\ref{red}, up to a subsequence, 
we have
\begin{eqnarray} 
& \nabla U_n\to \nabla U_0 \ \mbox{a.e. in } \Omega\ \mbox{as } n\to \infty, \label{gtr}\\
& U_n\to U_0\ \mbox{(strongly) in } W_0^{1,\overrightarrow{p}}(\Omega)\ \mbox{as } n\to \infty. \label{gtree}
\end{eqnarray}   
\end{prop}

\begin{rem} \label{r24} Under the same assumptions as in Proposition~\ref{red}, by Fatou's Lemma we immediately infer that $\Phi(U_0)\in L^1(\Omega)$. Furthermore, using Fatou's Lemma and \eqref{e2}--\eqref{gtr}, we find that 
$\Phi(U_0)\,U_0$ and $\Psi(U_0) \,U_0$ belong to $L^1(\Omega)$. 
\end{rem}


\section{Proof of Theorem~\ref{mainth} completed} \label{sec-th1}

Let $m$ satisfy \eqref{m} and $f=0$. 
We show that the function $U_0$ in Proposition~\ref{red2} is a solution of \eqref{eq1}. Once this is established, we readily obtain  that \eqref{ff1} holds  for $v=U_0$ in both Case~1 and Case~2 with a reasoning similar to Lemma~\ref{pol}. Indeed, by taking $v=T_k (U_0)$ in \eqref{ff1} and letting $k$ go to infinity, we get the claim.

\vspace{0.2cm}
We now prove \eqref{ff1}. As already pointed out in  Section 1.4, we just need to check \eqref{c4} for every  $v \in W_0^{1,\overrightarrow{p}}(\Omega)\cap L^\infty(\Omega)$.
We first establish the second identity in \eqref{c4}, that is 
\begin{equation} \label{lli} 
\lim_{n\to \infty} \int_\Omega \Psi_n(U_n)\,v\,dx=
S_{U_0,\Theta}(v)
-\langle\mathfrak{B} U_0, v\rangle .
\end{equation}

{\em Proof of \eqref{lli}.} Since $U_n\to U_0$ and $\nabla U_n\to \nabla U_0$ a.e. in $\Omega$ as $n\to \infty$, we have 
\begin{equation} \label{sma} 
 \Theta(U_n) \to \Theta(U_0)\quad \mbox{and}\quad A_j(U_n)\to A_j(U_0)\ \ \mbox{a.e. in } \Omega \ \ \mbox{for } 1\leq j\leq N. 
 \end{equation} 
Let $v \in W_0^{1,\overrightarrow{p}}(\Omega)\cap L^\infty(\Omega)$ be arbitrary. Now, $\Theta$ satisfies \eqref{cond1}. Thus, by the Dominated Convergence Theorem, we obtain that 
$ \Theta(U_n)\, v\to \Theta(U_0)\, v$ in $L^1(\Omega)$ as $n\to \infty$. 
Since $\{A_j(U_n)\}_{n\geq 1}$ is uniformly bounded in $L^{p_j'}(\Omega)$ with respect to $n$, from \eqref{sma} we get that, up to a subsequence, 
 \begin{equation} \label{jok2} A_j(U_n)\rightharpoonup A_j(U_0)\ \mbox{ (weakly) in } L^{p_j'}(\Omega)\ \mbox{as } n\to \infty\end{equation}
 for every $1\leq j\leq N$. 
It follows that $\lim_{n\to \infty}  \langle \mathcal{A} U_n, v\rangle = \langle \mathcal{A} U_0, v\rangle $. 
Using that $U_n\rightharpoonup U_0$ (weakly) in $W_0^{1,\overrightarrow{p}}(\Omega)$ as $n\to \infty$, the property $(P_2)$ for the operator $\mathfrak B$ yields that 
$\lim_{n\to \infty}  \langle \mathfrak{B} U_n, v\rangle = \langle \mathfrak{B} U_0, v\rangle$. Thus, by passing to the limit as $n\to \infty$ in \eqref{fff2}, 
we gain \eqref{lli} whenever
\begin{equation}\label{gli}  
\Phi(U_n)\to \Phi(U_0) \text{ (strongly) in } L^1(\Omega)\ \mbox{as } n\to \infty. 
\end{equation} 
Since 
$\Phi(U_n)\to \Phi(U_0)$ a.e. in $\Omega$ as $n\to \infty$ and $\Phi(U_0)\in L^1(\Omega)$ (see Remark~\ref{r24}), 
by Vitali's Theorem,  
it is enough to show that $\{\Phi(U_n)\}_n$ is uniformly integrable over $\Omega$. Let $\omega$ be 
any measurable subset  of $\Omega$ and $M>0$ be arbitrary.  
By the growth condition of $\Phi$ in \eqref{cond1}, we have
\begin{equation} \label{net}  
\int_{\omega \cap \{|U_n| \le M\}} |\Phi(U_n)| \, dx \le 
\phi(M)\left(\sum_{j=1}^N  \|\partial_j T_M(U_n)\|^{p_j}_{L^{p_j}(\omega)}+
\int_{\omega} c(x)\,dx
\right).
\end{equation}
On the other hand, using \eqref{e2} and the sign-condition on $\Phi$ in \eqref{cond1}, we see that 
\begin{equation} \label{new1} 
\int_{\omega \cap \{|U_n| > M\}} |\Phi(U_n)| \, dx \le 
\frac{1}{M}\int_{\omega} \Phi(U_n) \,U_n\,dx\leq \frac{C}{M}, 
\end{equation} where $C>0$ is a constant independent of $n$ and $\omega$. 
Since $c\in L^1(\Omega)$ and $\partial_j T_M(U_n)\to \partial_j T_M(U_0)$ in $L^{p_j}(\Omega)$ as $n\to \infty$ for all $1 \le j \le N$ (see \eqref{gtree}), from \eqref{net} and \eqref{new1} we get the equi-integrability of $\{\Phi(U_n)\}_n$ over $\Omega$.
By Vitali's Theorem, we end the proof of \eqref{gli}. \qed

\vskip 0.2cm
It remains to show the first identity in \eqref{c4}, that is

\begin{equation} \label{lima}
\lim_{n\to \infty} \int_\Omega \Psi_n(U_n)\,v\,dx=
\int_{\{|U_0|>0\}}  \Psi(U_0)\,v\,dx
\end{equation} 
for every  $v \in W_0^{1,\overrightarrow{p}}(\Omega)\cap L^\infty(\Omega)$. 

\noindent Recall that $N_{\overrightarrow{\mathfrak{a}}}$ and $P_{\overrightarrow{\mathfrak{a}}}$ are given in \eqref{np}. We define 
\begin{equation} \label{npc}
\begin{aligned}
& N_{\overrightarrow{\mathfrak{a}}}^{\,c}:=\left\{1\leq j\leq N:\  
\mathfrak{a}_j q_j=0,\quad \frac{\theta_j p_j}{p_j-q_j} < p
\right\}, \\
& P_{\overrightarrow{\mathfrak{a}}}^c:=\left\{1\leq j\leq N :\ 
\mathfrak{a}_jq_j>0,\quad 
\mathfrak{m}_j\leq 1\right\}.
\end{aligned}
\end{equation}
It follows that  
\begin{equation} \label{univ} 
 \{1\leq j\leq N:\ \mathfrak{a}_jq_j=0\}=N_{\overrightarrow{\mathfrak{a}}}\cup N_{\overrightarrow{\mathfrak{a}}}^c,\qquad 
\{1\leq j\leq N:\ \mathfrak{a}_jq_j>0\}=
P_{\overrightarrow{\mathfrak{a}}}\cup P_{\overrightarrow{\mathfrak{a}}}^c.
\end{equation}

For every $1\leq j\leq N$, we introduce the notation
	\begin{equation} \label{impj} 
	\mathfrak{I}_{m,p_j} (U_n):=\int_\Omega
		|U_n|^m |\partial_j U_n|^{p_j}\,dx,\qquad 
		\mathfrak{I}_{\theta_j,q_j}(U_n):=\int_\Omega |U_n|^{\theta_j} |\partial_j U_n|^{q_j}\,dx.
\end{equation}
To prove \eqref{lima}, we treat Case 1 in Subsection~\ref{sec-c1} and Case 2 in Subsection~\ref{sec-th2}.  

\subsection{Proof of \eqref{lima} in Case~1} \label{sec-c1}
Here,  $\theta_j>1$ for each $1\leq j\leq N$. 
Since $J_2=\emptyset$, from \eqref{psi} and \eqref{psia}, we find that  
$\Psi_n(U_n)=\Psi_{n,J_1}(U_n)=\sum_{j=1}^N H_{j,n} (U_n,\partial_j U_n)$,  with $H_{j,n}(\cdot,\cdot)$ defined in \eqref{foa}.
So, to prove \eqref{lima}, it suffices to show that (up to a subsequence) 
\begin{equation} \label{ioc} \lim_{n\to \infty}
\int_\Omega H_{j,n}\left(U_n,\partial_jU_n\right) v\,dx= 
\int_\Omega  |U_0|^{\theta_j-2} U_0 \,|\partial_j U_0|^{q_j}\,v\,dx
\end{equation} 
for every $1\leq j\le N$ and all  $v \in W_0^{1,\overrightarrow{p}}(\Omega)\cap L^\infty(\Omega)$. 

\noindent Let $1\leq j\leq N$ be arbitrary. By Proposition~\ref{red2}, we have
\begin{equation} \label{cob1}  H_{j,n}(U_n,\partial_j U_n )\to |U_0|^{\theta_j-2}U_0\, |\partial_j U_0|^{q_j}\quad \mbox{a.e. in }\Omega\ \mbox{as } n\to \infty. \end{equation}

\noindent We next show that there exists $s>1$ (depending on $j$) such that 
	\begin{equation} \label{lsmvbf}
 \| H_{j,n}(U_n,\partial_j U_n )\|_{L^{s}(\Omega)}\leq C  \end{equation} for a positive constant $C$ independent of $n$. 
We distinguish the following two situations.
	
\medskip
\noindent $(a)$ 
Let $j\in N_{\overrightarrow{\mathfrak{a}}}\cup  N_{\overrightarrow{\mathfrak{a}}}^{\, c}$ (when $\mathfrak{a}_j q_j=0$). We define $s$ as follows
\begin{equation*} \label{slgmhn} 
	s=m'\ \mbox{if } j\in N_{\overrightarrow{\mathfrak{a}}}\ \mbox{ and } s=p'\mbox{ if } j\in N_{\overrightarrow{\mathfrak{a}}}^{\, c}.
	\end{equation*} 
Let $c_j$ be given by
\begin{equation} \label{xij}  c_j:=(\mbox{meas}\,(\Omega))^{1/\lambda_j},\ \mbox{where } 
	\frac{1}{\lambda_j}:=
	1-\frac{\theta_j}{s'}-\frac{q_j}{p_j}.  
\end{equation} 
By H\"older's inequality and Proposition~\ref{red}, we infer that 
\begin{equation*} \label{cob3} 
\| H_{j,n}(U_n,\partial_j U_n )\|_{L^{s}(\Omega)} \leq 
	c_{j}\, \|U_n\|_{L^{s'}(\Omega)}^{\theta_j-1}\|\partial_j U_n\|_{L^{p_j}(\Omega)}^{q_j}\leq C,
\end{equation*} 
where $C$ is a positive constant independent of $n$.

\vspace{0.2cm}
\noindent $(b)$ Let $j\in P_{\overrightarrow{\mathfrak{a}}}\cup P_{\overrightarrow{\mathfrak{a}}}^{\, c}$ (when $\mathfrak{a}_j q_j>0$).  
Let $\mathfrak{I}_{m,p_j}(U_n)$ be as in \eqref{impj}. 
In each of the situations below, we use H\"older's inequality and Proposition~\ref{red} to obtain \eqref{lsmvbf} for suitable $s>1$. 
\begin{itemize}
\item[$(b_1)$]
If $m\geq (\theta_j-1)p_j/q_j$, then by choosing $1<s<p_j/q_j$, we see that 
$$  \| H_{j,n}(U_n,\partial_j U_n )\|_{L^{s}(\Omega)} \leq (\mbox{meas}\,(\Omega))^{\frac{1}{s}-\frac{q_j}{p_j}}
\left( \mathfrak{I}_{m,p_j} (U_n) \right)^{\frac{\theta_j-1}{m}} \|\partial_j U_n\|_{L^{p_j}(\Omega)}^{q_j-\frac{(\theta_j-1)p_j}{m}}. 
$$ 
\item[$(b_2)$] If $\theta_j-1<m<(\theta_j-1)p_j/q_j$, then for $1<s<m/(\theta_j-1)$, we have
$$  \| H_{j,n}(U_n,\partial_j U_n )\|_{L^{s}(\Omega)} \leq (\mbox{meas}\,(\Omega))^{\frac{1}{s}-\frac{\theta_j-1}{m}}
\left(  \mathfrak{I}_{m,p_j} (U_n) \right)^{\frac{q_j}{p_j}} \|U_n\|_{L^{m}(\Omega)}^{\theta_j-1-\frac{q_jm}{p_j}}. 
$$ 
\item[$(b_3)$] If $1<m\leq \theta_j-1$, then we always have
$m>\mathfrak{m}_j$. Indeed, if $j\in P_{\overrightarrow{\mathfrak{a}}}$, then the assumption \eqref{m} gives that 
$m>\min\{\theta_j,\mathfrak{m}_j\}=\mathfrak{m}_j$. If, in turn, $j\in P_{\overrightarrow{\mathfrak{a}}}^{\, c}$, then 
$\mathfrak{m}_j\leq 1<m$. Hence, 
$m>\mathfrak{m}_j$ for $ j\in P_{\overrightarrow{\mathfrak{a}}}\cup P_{\overrightarrow{\mathfrak{a}}}^{\, c}$ leads to   
$$  \| H_{j,n}(U_n,\partial_j U_n )\|_{L^{p'}(\Omega)} \leq (\mbox{meas}\,(\Omega))^{\frac{q_j(m-\mathfrak{m}_j)}{pp_j}}
\left(  \mathfrak{I}_{m,p_j} (U_n) \right)^{\frac{q_j}{p_j}} \|U_n\|_{L^{p}(\Omega)}^{\theta_j-1-\frac{q_jm}{p_j}}. 
$$ Thus, \eqref{lsmvbf} holds with $s=p'$. 
\end{itemize}
This proves \eqref{lsmvbf} for every $1\leq j\leq N$. Then, using \eqref{cob1}, 
	we have, up to a subsequence, 
	\begin{equation*} \label{hau}
H_{j,n}\left(U_n,\partial_jU_n\right) \rightharpoonup   |U_0|^{\theta_j-2} U_0\, |\partial_j U_0|^{q_j}\quad 
	\mbox{(weakly) in } L^{s}(\Omega)\ \mbox{as } n\to \infty,
	\end{equation*} 
where $s>1$ is chosen according to $(a)$, $(b_1)$, $(b_2)$ or $(b_3)$ (for the latter, we take $s=p'$). 
Thus, \eqref{ioc} follows for every $1\leq j\leq N$ and all $v \in W_0^{1,\overrightarrow{p}}(\Omega)\cap L^\infty(\Omega)$.

\subsection{Proof of \eqref{lima} in Case~2} \label{sec-th2}
Let $v$ be any non-negative function in $W_0^{1,\overrightarrow{p}}(\Omega)\cap L^\infty(\Omega)$.   
By Lemma~\ref{pol}, for each $n\geq 1$, we have $U_n\geq 0$ a.e. in $\Omega$ and the same applies to 
$U_0$. Hence, proving \eqref{lima} amounts to showing that  
\begin{equation} \label{barb}  \lim_{n\to \infty}   \int_\Omega \left(\Psi_{n,J_1} (U_n)+ \Psi_{n,J_2} (U_n) \right)v\,dx=\sum_{j=1}^N \int_{\{U_0>0\}} \frac{|\partial_j U_0|^{q_j}}{U_0^{1-\theta_j}}\,v\,dx,
\end{equation}
where $\Psi_{n,J_1} (U_n)$ and $\Psi_{n,J_2} (U_n) $ can be obtained from \eqref{psia} replacing $u$ by $U_n$.  

\noindent From $U_0\in W_0^{1,\overrightarrow{p}}(\Omega)$, it follows that $\nabla U_0=0$ a.e. in $\{U_0=0\}$.
For every $j\in J_1$ we have $\theta_j>1$ so that with the same argument given for Case~1 in Section~\ref{sec-c1}, we find that 
\begin{equation*} \label{neo}
 \lim_{n\to \infty}   \int_\Omega \Psi_{n,J_1} (U_n)\,v\,dx
=
\lim_{n\to \infty}
\sum_{j\in J_1} \int_\Omega H_{j,n}\left(U_n,\partial_jU_n\right) v\,dx= 
\sum_{j\in J_1} \int_{\{U_0>0\}} U_0^{\theta_j-1}  |\partial_j U_0|^{q_j}\,v\,
dx.\end{equation*}   
Hence, using \eqref{lima}, we infer that there exists
$\lim_{n\to \infty} \int_\Omega \Psi_{n,J_2}(U_n)\,v\,dx$. 
To reach \eqref{barb}, it remains to show that
\begin{equation} \label{barbi} 
\lim_{n\to \infty} \int_{\Omega} \Psi_{n,J_2}(U_n)\,v\,dx=\sum_{j\in J_2} \int_{\{U_0>0\}}  \frac{|\partial_j U_0|^{q_j}}{U_0^{1-\theta_j} }\,v\,
dx.\end{equation} 

\noindent To this aim, let us notice that, for every $\sigma>0$, we have
\begin{equation} \label{run} 
\int_{\Omega} \Psi_{n,J_2}(U_n)\,v\,dx=
\int_{\{U_{n} > \sigma\}} \Psi_{n,J_2}(U_{n})\,v\,dx+ 
\int_{\{U_{n} \le \sigma\}} \Psi_{n,J_2}(U_{n})\,v\,dx.
\end{equation}
Fix $\sigma>0$ such that $\sigma\not\in \mathcal E$, where we define 
\begin{equation} \label{sete}
\mathcal{E}:=\{\sigma>0:\, \mbox{meas}\,(\{U_0=\sigma\})>0\}.
\end{equation}
We show that
\begin{equation} \label{comm} 
\begin{aligned}
&(i)\  \lim_{n\to \infty}  \int_{\{U_{n} > \sigma\}} \Psi_{n,J_2}\left(U_{n}\right)\,v\,dx=\sum_{j\in J_2} \int_{\{U_0>\sigma\}} \frac{|\partial_j U_0|^{q_j}}{U_0^{1-\theta_j}}\,v\,dx, \\
&
(ii)\ \lim_{\sigma \to 0} \lim_{n\to\infty}  \int_{\{U_{n} \le \sigma\}} \Psi_{n,J_2}\left(U_{n}\right)\,v\,dx=0.
\end{aligned}
\end{equation}
Assuming that the assertions in \eqref{comm} have been proved, we end the proof of \eqref{barbi} as follows.
We have $\chi_{\{U_0>\sigma_2\}}\leq \chi_{\{U_0>\sigma_1\}}$ for $0<\sigma_1<\sigma_2$, and 
the set $\mathcal{E}$ in \eqref{sete} is at most countable.  
Moreover, from \eqref{run} and \eqref{comm}, we see that
$$ \sum_{j\in J_2}\int_{\{U_0>\sigma\}} \frac{|\partial_j U_0|^{q_j}}{U_0^{1-\theta_j}}\,v\,dx\leq \lim_{n\to \infty} 
\int_{\Omega} \Psi_{n,J_2}(U_n)\,v\,dx<\infty.$$
Hence, by the Monotone Convergence Theorem, we deduce that  
\begin{equation} \label{kk}  
\lim_{\sigma\to 0, \,\sigma\notin \mathcal E}\lim_{n\to \infty}  \int_{\{U_{n} > \sigma\}} \Psi_{n,J_2}\left(U_{n}\right)\,v\,dx 
= \sum_{j\in J_2} \int_{\{U_0>0\}} \frac{|\partial_j U_0|^{q_j}}{U_0^{1-\theta_j}}\,v\,dx<\infty. 
\end{equation}
Using \eqref{comm} and \eqref{kk} in \eqref{run}, we obtain \eqref{barbi}. It remains to show \eqref{comm}. 

\noindent $(i)$ Let $j\in J_2$ be arbitrary. We conclude $(i)$ by proving that 
\begin{equation} \label{nim} \lim_{n\to \infty}  \int_{\{U_{n} > \sigma\}} 
H_{j,n}\left(U_n+\frac{1}{n},\partial_j U_n\right)
v\,dx=\int_{\{U_0>\sigma\}} \frac{|\partial_j U_0|^{q_j}}{U_0^{1-\theta_j}}\,v\,dx.
\end{equation}
For every measurable subset $\omega$ of $\Omega$, we have
$$   \int_{\omega\cap \{U_{n} > \sigma\}} 
H_{j,n}\left(U_n+\frac{1}{n},\partial_j U_n\right)
v\,dx\leq \frac{\|v\|_{L^\infty(\Omega)}}{\sigma^{1-\theta_j}} \|\partial_j U_n\|_{L^{p_j}(\Omega)}^{q_j} \left(\mbox{meas}\,(\omega)\right)^{1-\frac{q_j}{p_j}}.
$$ From Proposition~\ref{red}, using that $\sigma \notin \mathcal{E}$, we obtain that 
$\chi_{\{ U_{n}> \sigma\}} \to\chi_{\{U_0>\sigma\}}$ a.e. in the set $\{U_0 \ne \sigma\}$, as well as  
$$  H_{j,n}\left(U_n+\frac{1}{n},\partial_j U_n\right) \chi_{\{U_n>\sigma\}}\,v \to
\frac{|\partial_j U_0|^{q_j}}{U_0^{1-\theta_j}} \chi_{\{U_0>\sigma\}}\,v\quad \mbox{a.e. in } \Omega\ \mbox{as } n\to \infty. 
$$
By Vitali's Theorem, we conclude the proof of \eqref{nim}. 

\medskip
\noindent $(ii)$ Let $Z_\sigma: [0,\infty)\to[0,1]$ be the following function 
$$
Z_\sigma(s)=\left\{
\begin{array}{ll}
1 & \text{if } 0 \le s \le \sigma,
\\
2-s/\sigma & \text{if } \sigma\le s \le 2\sigma,
\\
0 & \text{if } 2\sigma \le s.
\end{array}
\right.
$$ 
For $w\in W_0^{1,\overrightarrow{p}}(\Omega)$, we define  
\begin{equation} \label{goal}  \mathcal L_{\sigma,v} (w):=\sum_{j=1}^N \int_\Omega A_j(w)\,Z_\sigma(w)\,\partial_j v\, dx+ 
\int_\Omega \Phi(w) \,Z_\sigma(w)\,v\,dx.
\end{equation} 
Observe that $Z_\sigma(U_0) \to\chi_{\{U_0=0\}}$ a.e. in $\Omega$ as $\sigma \to 0$ and $U_0 \in W_0^{1,\overrightarrow{p}}(\Omega)$ implies that $\nabla U_0=0$ a.e. in $\{U_0=0\}$. 
From \eqref{bno}, we have 
$\Phi(x,0,0)=0$ and $A_j(x,0,0)=0$ a.e. in $\Omega$, for every $1 \le j \le N$. It follows that 
 $\mathcal L_{\sigma,v} (U_0)\to 0$ as $\sigma \to 0$. 
Hence, we conclude the assertion of $(ii)$ in \eqref{comm} by showing that
\begin{eqnarray} 
& \displaystyle 0\leq \int_{\{U_{n} \le \sigma\}} \Psi_{n,J_2}\left(U_{n}\right)\,v\,dx\leq \mathcal L_{\sigma,v} (U_n)\quad \mbox{for every } n\geq 1,\label{cobo}\\
& \displaystyle  \lim_{n\to \infty} \mathcal L_{\sigma,v} (U_n) =\mathcal L_{\sigma,v} (U_0).  \label{coba}
\end{eqnarray}

\noindent From \eqref{bno}, we have $\langle\mathfrak{B}U_n,Z_\sigma\left(U_n\right)v\rangle\geq 0$ and $\Theta(U_n)\leq 0$ for every $n\geq 1$. Thus, 
by taking $v\, Z_\sigma(U_n)\ge 0$ as a test function in \eqref{fff2} and using the coercivity condition in \eqref{ellip}, we see that  
\begin{equation} \label{opi}
\mathcal L_{\sigma,v} (U_n)\geq \frac{\nu_0}{\sigma} \displaystyle\sum_{j=1}^N\int_{\{ \sigma<U_n<2\sigma\}} |\partial_j U_n|^{p_j} \,v\,dx+
\displaystyle \int_\Omega Z_\sigma(U_n)\,\Psi_{n}(U_n)\,v\, dx.
\end{equation}
Since  $Z_\sigma(U_n) = 1$ in $\{U_n\leq \sigma\}$, from \eqref{opi}, we derive \eqref{cobo}. 

\noindent Using that $Z_\sigma(U_{n}) \to Z_\sigma(U_0)$ a.e. in $\Omega$ as $n\to \infty$, by Lebesgue's Dominated Convergence Theorem, for each $1\leq j\leq N$, we find that $Z_\sigma(U_{n})\,\partial_j v \to Z_\sigma(U_0)\,\partial_jv$ (strongly) in $L^{p_j}(\Omega)$ as $n\to \infty$. This, jointly with \eqref{jok2}, implies that
$$ \lim_{n\to \infty} \sum_{j=1}^N \int_\Omega 
A_j(U_n)\,Z_\sigma(U_n)\,\partial_j v\, dx =\sum_{j=1}^N \int_\Omega A_j(U_0)\,
Z_\sigma(U_0)\,\partial_j v\,dx. 
$$
Similar to the proof of \eqref{gli}, we have $\Phi(U_n) \,Z_\sigma(U_n)\to \Phi(U_0)\, Z_\sigma(U_0)$ in $L^1(\Omega)$ as $n\to \infty$. Then, using $w=U_n$ in \eqref{goal} and letting $n\to \infty$, we obtain \eqref{coba}. 

The proof of \eqref{barbi}, and hence of \eqref{barb}, is now complete. 

\vspace{0.2cm}
This ends the proof of Theorem~\ref{mainth}. \qed


\section{Proof of Proposition \ref{red}} \label{sec3}

For each $n\geq 1$, the solution $U_n\in  W_0^{1,\overrightarrow{p}}(\Omega)\cap L^m(\Omega)$ of \eqref{e1b} given in Lemma \ref{pol} satisfies 
\begin{equation} \label{e22cv}
	\langle \mathcal A U_n, U_n\rangle -\langle \mathfrak{B}U_n,U_n\rangle  +
	 \int_{\Omega}   
	 \Phi(U_n)\,U_n\,dx =-\int_{\Omega}   
	 \Theta(U_n)\,U_n\,dx+
	\int_\Omega \Psi_n (U_n)\,U_n\,dx.
	\end{equation} 

(a) We prove that there exists a constant $C>0$ such that \eqref{e2} holds for all $n\geq 1$.
	
	We first show that $\{U_n\}_{n\geq 1}$ is bounded in $ W_0^{1,\overrightarrow{p}}(\Omega)$. 
We have assumed that $\mathfrak B$ belongs to the class $\mathfrak{BC} ((1-\varepsilon)\, \mathcal A)$ for some $\varepsilon>0$. 	
Using the coercivity condition in \eqref{ellip}, \eqref{etc} and \eqref{impj}, we find that the left-hand side of \eqref{e22cv} is bounded below by 
\begin{equation}\label{alb} 
\varepsilon \nu_0\,\sum_{k=1}^N \|\partial_k U_n\|^{p_k}_{L^{p_k}(\Omega)}
+\langle [(1-\varepsilon) \mathcal A -\mathfrak B] U_n,U_n\rangle
+\mathfrak{a}_0 \|U_n\|^m_{L^m(\Omega)}+
\sum_{j\in  P_{\overrightarrow{\mathfrak{a}}} \cup P_{\overrightarrow{\mathfrak{a}}}^{\,c}} \mathfrak{a}_j \mathfrak{I}_{m,p_j} (U_n).
  \end{equation}
We observe that $\mathfrak {a}_j>0$ for every $j\in P_{\overrightarrow{\mathfrak{a}}} \cup P_{\overrightarrow{\mathfrak{a}}}^{\,c}$. 
We now consider the right-hand side of \eqref{e22cv}. Using \eqref{cond1} 
and the anisotropic Sobolev inequality \eqref{ASI} in the Appendix, we find a positive constant $C$, independent of $n$,  
such that
$$ \left| \int_{\Omega}   
	 \Theta(U_n)\,U_n\,dx\right|\leq C_\Theta \|U_n\|_{L^1(\Omega)} \leq C \|U_n\|_{ W_0^{1,\overrightarrow{p}}(\Omega)}.
$$
By Young's inequality, for each $\delta>0$, there exists a constant $C_\delta>0$, depending on $\delta$, such that 
\begin{equation} \label{bar}  \left| \int_{\Omega}   
	 \Theta(U_n)\,U_n\,dx\right|\leq  C \sum_{k=1}^N  \|\partial_k U_n\|_{L^{p_k}(\Omega)}
	 \leq \delta \sum_{k=1}^N \|\partial_k U_n\|^{p_k}_{L^{p_k}(\Omega)}+C_\delta\quad \mbox{for all } n\geq 1.
\end{equation}
By \eqref{impj}, we have
\begin{equation} \label{nuu}   
  \int_\Omega \Psi_n (U_n)\,U_n\,dx  \leq \int_\Omega \Psi (U_n)\,U_n\,dx
\leq
\sum_{j=1}^N 
\mathfrak{I}_{\theta_j,q_j}(U_n).
\end{equation}
In Lemma~\ref{intorj} below, we obtain a suitable upper bound for $\sum_{j=1}^N 
\mathfrak{I}_{\theta_j,q_j}(U_n)$. To this end, 
we need to distinguish the case $m\geq \theta_j p_j/q_j$ from $m<\theta_jp_j/q_j$ whenever 
$j\in P_{\overrightarrow{\mathfrak{a}}}\cup P^{\,c}_{\overrightarrow{\mathfrak{a}}}$. We observe that  $P_{\overrightarrow{\mathfrak{a}}}\cup P^{\,c}_{\overrightarrow{\mathfrak{a}}}=\{ 1\leq j\leq N: \ \mathfrak{a}_j q_j>0\}$ is 
a union of three sets:
\begin{equation} \label{univ2}  P_{\overrightarrow{\mathfrak{a}}}\cup P^{\,c}_{\overrightarrow{\mathfrak{a}}}=
\widehat{P}_{\overrightarrow{\mathfrak{a}},1}\cup \widehat{P}_{\overrightarrow{\mathfrak{a}},2}\cup  P_{\overrightarrow{\mathfrak{a}},3},
\end{equation}
where we define 
\begin{equation} \label{mii} 
\begin{aligned}
& \widehat{P}_{\overrightarrow{\mathfrak{a}},1}:=\left\{ j\in P_{\overrightarrow{\mathfrak{a}}}\cup P^{\,c}_{\overrightarrow{\mathfrak{a}}}:\  
m\geq  \theta_j p_j/q_j \right\},\\
&  \widehat{P}_{\overrightarrow{\mathfrak{a}},2}:=\left\{ j\in P_{\overrightarrow{\mathfrak{a}}}:\ \theta_j<p ,\quad 
m<\theta_jp_j/q_j \right\}\cup \left\{ j\in P^{\,c}_{\overrightarrow{\mathfrak{a}}}:\ m<  \theta_j p_j/q_j \right\}, \\
& P_{\overrightarrow{\mathfrak{a}},3}:=\left\{ j\in P_{\overrightarrow{\mathfrak{a}}}:\ \theta_j\geq p,\quad 
m<\theta_jp_j/q_j \right\}. 
\end{aligned} \end{equation}

\begin{lemma} \label{intorj} For any $\delta>0$, there exists a positive constant $C_\delta$ such that, for every $n\geq 1$,
	 \begin{equation} \label{har}
		\sum_{j=1}^N 
	\mathfrak{I}_{\theta_j,q_j}(U_n)
	   \leq 
	N \delta  \|U_n\|_{L^m(\Omega)}^m 
	+ \delta\sum_{j\in P_{\overrightarrow{\mathfrak{a}}} \cup P_{\overrightarrow{\mathfrak{a}}}^{\,c}} \mathfrak{I}_{m,p_j}(U_n)
	 +(1+N) \delta\sum_{k=1}^N  \|\partial_k U_n\|^{p_k}_{L^{p_k}(\Omega)}
	+C_\delta.
 \end{equation}
	\end{lemma}
	
	\begin{proof}
For the inequalities in \eqref{har0}, \eqref{haric}, \eqref{har1}--\eqref{hari2} below, we use H\"older's inequality, then Young's inequality (see Lemma~\ref{young} in the Appendix). 
In what follows, we understand that $\delta>0$ is arbitrary and $C_\delta>0$ is a suitable constant depending on $\delta$.  

\vspace{0.2cm}
(I) We first estimate $ \mathfrak{I}_{\theta_j,q_j}(U_n)$ for every 
$j\in  N_{\overrightarrow{\mathfrak{a}}}\cup N_{\overrightarrow{\mathfrak{a}}}^{\,c}$ when we let $c_j$ and $\lambda_j$ be as in  \eqref{xij}. 

\noindent $\bullet$ 
Let $j\in N_{\overrightarrow{\mathfrak{a}}}$. 
Condition \eqref{m} gives that $\lambda_j>1$ so that 
\begin{equation}\label{har0}
\mathfrak{I}_{\theta_j,q_j}(U_n)
 \le 
c_j \|U_n\|_{L^m(\Omega)}^{\theta_j} \|\partial_j U_n\|^{q_j}_{L^{p_j}(\Omega)}
 \leq \delta 
\|U_n\|_{L^m(\Omega)}^m 
+\delta
\| \partial_j U_n\|_{L^{p_j}(\Omega)}^{p_j}+C_\delta.
\end{equation}
 
\noindent $\bullet$ Let $j\in  N_{\overrightarrow{\mathfrak{a}}}^{\,c}$. Using Lemma~\ref{young} and the anisotropic Sobolev inequality \eqref{ASI} in the Appendix, we find a positive constant $C$, depending on $N$, $\overrightarrow{p}$, $q_j$, $\theta_j$ and 
$\mbox{meas}\,(\Omega)$, such that
\begin{equation}\label{haric}
 \begin{aligned}
 \mathfrak{I}_{\theta_j,q_j}(U_n)
 &\le 
c_j\,
  \|U_n\|_{L^{p}(\Omega)}^{\theta_j} \|\partial_j U_n\|^{q_j}_{L^{p_j}(\Omega)}
 \leq C \|\partial_j U_n\|^{\frac{\theta_j}{N}+q_j}_{L^{p_j}(\Omega)}
\prod_{k\in \{1,\ldots,N\}\setminus\{j\}} \|\partial_k U_n\|^{\frac{\theta_j}{N}}_{L^{p_k}(\Omega)}
 \\
 &\leq \delta \sum_{k=1}^N  \|\partial_k U_n\|^{p_k}_{L^{p_k}(\Omega)}
+C_\delta.
 \end{aligned}
\end{equation}

(II) We now estimate $ \mathfrak{I}_{\theta_j,q_j}(U_n)$ for every $j\in P_{\overrightarrow{\mathfrak{a}}}\cup P^{\,c}_{\overrightarrow{\mathfrak{a}}}$ when we define 
\begin{equation} \label{notxi}
c_j:=\left({\rm meas}\, (\Omega)\right)^{1/\lambda_j}\quad \mbox{and}\quad 
\frac{1}{\lambda_j}:=\left\{
\begin{aligned}
& 1-\frac{q_j}{p_j} && \mbox{if } j\in \widehat{P}_{\overrightarrow{\mathfrak{a}},1},&&\\
& 1-\frac{\theta_j}{m} && \mbox{if } j\in P_{\overrightarrow{\mathfrak{a}},3},&&\\
& \frac{q_j(m-\mathfrak{m}_j)}{p_jp} && \mbox{if } j\in 
 \widehat{P}_{\overrightarrow{\mathfrak{a}},2}.
\end{aligned}
\right.
\end{equation}

Recall that $P_{\overrightarrow{\mathfrak{a}}}:=\{1\leq j\leq N:\ 
\mathfrak{a}_jq_j>0, \  \mathfrak{m}_j> 1\}$. Condition \eqref{m} implies that $m>\min\{\theta_j,\mathfrak{m}_j\}$ whenever
$j\in P_{\overrightarrow{\mathfrak{a}}}$ and, moreover, 
$ \min\{\theta_j,\mathfrak{m}_j\}=\theta_j$ if and only if $ \theta_j\geq p$.

\noindent 
$\bullet$ For every $j\in \widehat{P}_{\overrightarrow{\mathfrak{a}},1}$, 
we obtain that 
 \begin{equation}\label{har1}
 \mathfrak{I}_{\theta_j,q_j}(U_n)
\le 
 c_j\, \|\partial_j U_n\|_{L^{p_j}(\Omega)}^{q_j-\frac{p_j\theta_j}{m}} 
\left( \mathfrak{I}_{m,p_j}(U_n)
\right)^{\frac{\theta_j}{m}}\\
\leq \delta \, \mathfrak{I}_{m,p_j}(U_n)
 +\delta \| \partial_j U_n\|_{L^{p_j}(\Omega)}^{p_j}+C_\delta.
\end{equation}

\noindent  $\bullet$ Let $j\in P_{\overrightarrow{\mathfrak{a}},3}$. In this case, we have $m>\theta_j$ so that 
\begin{equation}\label{har2}
\mathfrak{I}_{\theta_j,q_j}(U_n)
\le 
c_j\, \| U_n\|_{L^{m}(\Omega)}^{\theta_j-\frac{m q_j}{p_j}} 
(\mathfrak{I}_{m,p_j}(U_n)
)^{\frac{q_j}{p_j}}\\
\leq \delta \,\mathfrak{I}_{m,p_j}(U_n)
+\delta 
\| U_n\|_{L^{m}(\Omega)}^{m}+C_\delta.
\end{equation}

\noindent $\bullet$ Let $j\in \widehat{P}_{\overrightarrow{\mathfrak{a}},2}$. Then $m>\mathfrak{m}_j$. 
By H\"older's inequality, Lemma~\ref{young} and the anisotropic Sobolev inequality \eqref{ASI} in the Appendix, we find a positive constant $C=C(N,\overrightarrow{p},q_j,\theta_j,m,  
\mbox{meas}\,(\Omega))$ such that 
\begin{equation}\label{hari2}
\begin{aligned}
\mathfrak{I}_{\theta_j,q_j}(U_n)
&\le 
 c_j\, \| U_n\|_{L^{p}(\Omega)}^{\theta_j-\frac{m q_j}{p_j}} 
\left(  \mathfrak{I}_{m,p_j}(U_n)
\right)^{\frac{q_j}{p_j}}\le C \left(  \mathfrak{I}_{m,p_j}(U_n)
\right)^{\frac{q_j}{p_j}} \prod_{k=1}^N 
\|\partial_k U_n\|_{L^{p_k}(\Omega)}^{ \left( \theta_j-\frac{mq_j}{p_j}\right)\frac{1}{N}}
\\
&\leq \delta \, \mathfrak{I}_{m,p_j}(U_n)
+ \delta \sum_{k=1}^N \|\partial_k U_n\|_{L^{p_k}(\Omega)}^{p_k}
+C_\delta.
\end{aligned}
\end{equation}

\noindent By adding the inequalities in \eqref{har0}, \eqref{haric}, \eqref{har1}--\eqref{hari2}, we complete the proof of \eqref{har}. \end{proof}

{\bf Proof of Proposition~\ref{red} completed.}  
From \eqref{etc} and the definition of $P_{\overrightarrow{\mathfrak{a}}}$ and $ P^{\,c}_{\overrightarrow{\mathfrak{a}}}$, we have $\mathfrak a_0>0$ and $\min_{j\in P_{\overrightarrow{\mathfrak{a}}}\cup P^{\,c}_{\overrightarrow{\mathfrak{a}}}}\mathfrak a_j>0$. We choose $\delta>0$ small such that
\begin{equation} \label{smal}
\varepsilon \nu_0>(N+2)\delta,\quad \mathfrak a_0>N\delta \quad \mbox{and}\quad  
\min_{j\in P_{\overrightarrow{\mathfrak{a}}}\cup P^{\,c}_{\overrightarrow{\mathfrak{a}}}}\mathfrak a_j>\delta.
\end{equation}
 
By \eqref{bar}, \eqref{nuu} and Lemma~\ref{intorj}, there exists a positive constant $C_\delta$ such that for each $n\geq 1$, 
the right-hand side of 
\eqref{e22cv} is bounded above by 
\begin{equation} \label{fum}
	 (N+2)\delta \sum_{k=1}^N  \|\partial_k U_n\|^{p_k}_{L^{p_k}(\Omega)}+ N \delta \|U_n\|_{L^m(\Omega)}^m +
	 \delta \sum_{j\in P_{\overrightarrow{\mathfrak{a}}} \cup P_{\overrightarrow{\mathfrak{a}}}^{\,c}} \mathfrak{I}_{m,p_j}(U_n) + C_\delta.
\end{equation}

For ease of reference, we introduce $\mathfrak S_n$ as follows
$$  \mathfrak S_n:=\|U_n\|^m_{L^m(\Omega)}+\sum_{j\in  P_{\overrightarrow{\mathfrak{a}}} \cup P_{\overrightarrow{\mathfrak{a}}}^{\,c}}  \mathfrak{I}_{m,p_j} (U_n)\geq 0.
$$ 
In view of \eqref{e22cv}, the quantity in \eqref{alb} is bounded above by that in \eqref{fum}. 
Hence, using the inequalities in \eqref{smal}, we infer that for some small constant $\varepsilon_1>0$, we have
\begin{equation} \label{opm}   \varepsilon_1 \left(\sum_{k=1}^N \|\partial_k U_n\|^{p_k}_{L^{p_k}(\Omega)}+\mathfrak S_n\right)
+\langle \left[\left(1-\varepsilon\right) \mathcal A-\mathfrak B\right] U_n,U_n\rangle
\leq C_{\delta}
\end{equation} for every $n\geq 1$. Now, from the hypothesis that 
$\mathfrak B$ belongs to the class $\mathfrak{BC}((1-\varepsilon)\mathcal A)$ with $\varepsilon\in (0,1)$, we have the coercivity of the operator 
$(1-\varepsilon) \mathcal A-\mathfrak B$ from $W_0^{1,\overrightarrow{p}}(\Omega)$ into $W^{-1,\overrightarrow{p}'}(\Omega)$. Hence, \eqref{opm} implies that 
$\{U_n\}_{n\geq 1}$ is bounded in $ W_0^{1,\overrightarrow{p}}(\Omega)$. Since $\mathfrak B:W_0^{1,\overrightarrow{p}}(\Omega)\to W^{-1,\overrightarrow{p}'}(\Omega)$ is bounded, we find a constant $ C>0$ such that  
$|\langle \mathfrak B U_n,U_n\rangle |\leq C$ for every $n\geq 1$. Using also the coercivity assumption in \eqref{ellip}, the inequality in \eqref{opm} gives the boundedness of 
$\{\mathfrak S_n\}_{n\geq 1}$. 
Using this fact into \eqref{nuu} and \eqref{har}, we conclude from \eqref{nuu} that 
$$  0\leq \int_\Omega \Psi_n (U_n)\,U_n\,dx  \leq
 \int_\Omega \Psi(U_n)\,U_n\, dx\leq C, 
$$ 
where $C>0$ is a constant independent of $n\geq 1$. Returning to \eqref{e22cv} and using \eqref{bar}, we obtain that the sequence of positive functions 
$\{\Phi(U_n) \,U_n\}_{n\geq 1}$ is bounded in $L^1(\Omega)$. 
The proof of  \eqref{e2} is now complete. \qed

\vspace{0.2cm}
(b) From  \eqref{e2}, there exists a function $U_0 \in W_0^{1,\overrightarrow{p}}(\Omega)\cap L^m(\Omega)$ such that, up to a subsequence, 
 \eqref{uni} holds. This completes the proof of Proposition~\ref{red}. \qed

\begin{rem} \label{rem32} From \eqref{uni}, we have  
 $ U_n^\pm \rightharpoonup U_0^\pm$ (weakly) in $ W_0^{1,\overrightarrow{p}}(\Omega)$ as $n\to \infty$, which yields that  
 $ \lim_{n\to \infty}
 \langle \mathcal A U_0^\pm, U_n^\pm -U_0^\pm \rangle = 0$.  
\end{rem}

\section{Applications of Proposition~\ref{red}} \label{sec5}

Throughout this section, the assumptions of Proposition~\ref{red} hold. For each $n\ge 1$ let $U_n$ be the solution of \eqref{e1b} provided by Lemma \ref{pol}.

\begin{lemma} \label{aplu} 
Let $\omega$ be a measurable subset of  $\Omega$. Assume that $\{V_n\}_{n\geq 1}$ is a sequence in $W_0^{1,\overrightarrow{p}}(\Omega)\cap L^m(\Omega)$ satisfying 
$|V_n|\leq |U_n|$ on $\omega$ for all $n\geq 1$. Then, for every $\tau\in (0,1)$ small enough and $\beta\in (1/\tau,m/\tau)$ fixed, there exists a positive constant
$C$, independent of $\omega$, such that 
\begin{equation} \label{fly} 
\sum_{j=1}^N \int_\omega |U_n|^{\theta_j-1} |\partial_j U_n|^{q_j} |V_n|\,dx\leq C\left( \|V_n\|_{L^p(\Omega)}^\tau+\|V_n\|^\tau_{L^{\tau\beta}(\Omega)}\right)\quad \mbox{for all } n\geq 1. 
\end{equation}
\end{lemma}

\begin{proof} 
Fix $\tau$ small satisfying $0<\tau<\min\{m-1, \min_{1\leq j\leq N}\{\theta_j\},1\}$. 
Since $|V_n|\leq |U_n|$ on $\omega$ for all $n\geq 1$, 
we have   
$|U_n|^{\tau-1}\leq |V_n|^{\tau-1}$ on $\omega$ so that 
\begin{equation} \label{thet}  \int_{\omega} |U_n|^{\theta_j-1} |\partial_j U_n|^{q_j} |V_n| \,dx\leq 
\int_\Omega |U_n|^{\theta_j-\tau} |\partial_j U_n|^{q_j} |V_n|^\tau \,dx
\end{equation}
for every $1\leq j\leq N$. 
Recall from \eqref{npc}, \eqref{univ}, and \eqref{univ2} that 
$$ \{1\leq j\leq N\}= N_{\overrightarrow{\mathfrak{a}}}\cup N_{\overrightarrow{\mathfrak{a}}}^c\cup
\widehat{P}_{\overrightarrow{\mathfrak{a}},1}\cup \widehat{P}_{\overrightarrow{\mathfrak{a}},2} \cup  P_{\overrightarrow{\mathfrak{a}},3}.$$

\noindent By H\"older's inequality, with $c_j$ given \eqref{xij}, for every $j\in N_{\overrightarrow{\mathfrak{a}}}^{\,c}$, we have 
\begin{equation} \label{fj0}  \int_\Omega |U_n|^{\theta_j-\tau} |\partial_j U_n|^{q_j} |V_n|^\tau \,dx\leq 
 c_j\, \|U_n\|_{L^p(\Omega)}^{\theta_j-\tau} \|\partial_j U_n\|_{L^{p_j}(\Omega)}^{q_j} \|V_n\|_{L^p(\Omega)}^\tau.  
\end{equation}
By the definition of $\widehat{P}_{\overrightarrow{\mathfrak{a}},2}$ in \eqref{mii}, we can
take $\tau$ small such that 
$$ 0<\tau< \theta_j-\frac{mq_j}{p_j}\quad \mbox{for every } j\in \widehat{P}_{\overrightarrow{\mathfrak{a}},2}.$$ 
Using $c_j$ given by \eqref{notxi}, for every $j\in \widehat{P}_{\overrightarrow{\mathfrak{a}},2}$, we derive that 
\begin{equation} \label{fj4} 
 \int_\Omega |U_n|^{\theta_j-\tau} |\partial_j U_n|^{q_j} |V_n|^\tau \,dx\leq  
c_j \left(\mathfrak{I}_{m,p_j}\right)^{\frac{q_j}{p_j}} \|U_n\|_{L^p(\Omega)}^{\theta_j-\tau-\frac{mq_j}{p_j}} \|V_n\|_{L^p(\Omega)}^\tau.
\end{equation}
We fix $\beta\in (1/\tau,m/\tau)$. 
From \eqref{m}, we have $\lambda_j>1$ for every $j\in N_{\overrightarrow{\mathfrak{a}}} $, where $\lambda_j$ is given by \eqref{xij}. 
We choose $\tau>0$ small such that 
$(m-1)\,\tau <m/\lambda_j$ for every 
$j\in N_{\overrightarrow{\mathfrak{a}}}$, which implies that $\lambda_j m/(m+\tau \lambda_j)<\beta$. Hence, 
for every
$j\in N_{\overrightarrow{\mathfrak{a}}}$, by defining  $c_{j,N_{\overrightarrow{\mathfrak{a}}}}=\left(\mbox{meas}\,(\Omega)
\right)^{\frac{1}{\lambda_j}+\frac{\tau}{m}-\frac{1}{\beta}}$, we obtain that
\begin{equation} \label{fj2}  
 \int_\Omega |U_n|^{\theta_j-\tau} |\partial_j U_n|^{q_j} |V_n|^\tau \,dx
\leq  
c_{j,N_{\overrightarrow{\mathfrak{a}}}}\,
\|U_n\|_{L^m(\Omega)}^{\theta_j-\tau}  \|\partial_j U_n\|_{L^{p_j}(\Omega)}^{q_j} \|V_n\|_{L^{\tau \beta}(\Omega)}^\tau. 
\end{equation}
We diminish $\tau$ such that $0<\tau<(p_j-q_j)/p_j$ for every
$j\in \widehat{P}_{\overrightarrow{\mathfrak{a}},1}$. 
Using that $m\geq p_j\theta_j/q_j$ for every $ j\in \widehat{P}_{\overrightarrow{\mathfrak{a}},1}$, 
by H\"older's inequality, we infer that
\begin{equation} \label{fj3} 
 \int_\Omega |U_n|^{\theta_j-\tau} |\partial_j U_n|^{q_j} |V_n|^\tau \,dx
\leq c_{j,\widehat{P}_{\overrightarrow{\mathfrak{a}},1}} \left(\mathfrak{I}_{m,p_j}\right)^{\frac{\theta_j-\tau}{m}} 
\|\partial_j U_n\|^{q_j-\frac{p_j(\theta_j-\tau)}{m}}_{L^{p_j}(\Omega)}
\|V_n\|_{L^{\tau\beta}(\Omega)}^\tau
\end{equation}
for every $j\in \widehat{P}_{\overrightarrow{\mathfrak{a}},1}$, where 
$c_{j,\widehat{P}_{\overrightarrow{\mathfrak{a}},1}}=
\left(\mbox{meas}\,(\Omega)\right)^{\frac{p_j-q_j}{p_j}-\frac{1}{\beta}}$. 

\noindent Finally, for every $j\in P_{\overrightarrow{\mathfrak{a}},3}$, we have
$p\leq \theta_j<m<\theta_jp_j/q_j$ in view of \eqref{m}.
We let $\tau>0$ small such that $\tau<(m-\theta_j)/(m-1)$ for every
$j\in P_{\overrightarrow{\mathfrak{a}},3}$. 
Then, H\"older's inequality yields that
\begin{equation} \label{fj5}
 \int_\Omega |U_n|^{\theta_j-\tau} |\partial_j U_n|^{q_j} |V_n|^\tau \,dx
\leq
c_{j,P_{\overrightarrow{\mathfrak{a}},3}} 
\left(\mathfrak{I}_{m,p_j}\right)^{\frac{q_j}{p_j}}
\|U_n\|_{L^m(\Omega)}^{\theta_j-\tau-\frac{mq_j}{p_j}} \|V_n\|_{L^{\tau\beta }(\Omega)}^{\tau}
\end{equation}
for every $ j\in P_{\overrightarrow{\mathfrak{a}},3}$, 
where we define $c_{j,P_{\overrightarrow{\mathfrak{a}},3}}:= 
\left(\mbox{meas}\,(\Omega)\right)^{\frac{m-\theta_j+\tau}{m}-
	\frac{1}{\beta}}$.

From \eqref{thet}--\eqref{fj5}, jointly with the {\it a priori} estimates in   \eqref{e2},
we derive \eqref{fly}.  \end{proof}

\vskip 0.2cm 

We remark that
\begin{equation}\label{www}
\begin{aligned}
&G_k(U_n) \rightharpoonup G_k(U_0) \quad \mbox{(weakly) in
	  } W_0^{1,\overrightarrow{p}}(\Omega) \ \mbox{and in } L^m(\Omega) \quad \mbox{as } n \to \infty
	  \\
& ||G_k(U_n)||_{L^r(\Omega)} \to ||G_k(U_0)||_{L^r(\Omega)} \quad 	 \mbox{as } n \to \infty, \quad \mbox{where }1\le r<m,
\end{aligned} 
\end{equation}
and
 \begin{equation} \label{gun} G_k(U_0^+)\rightharpoonup 0\quad \mbox{(weakly) in
	  } W_0^{1,\overrightarrow{p}}(\Omega)\ \ \mbox{as } k\to \infty.\end{equation} 
	  For every $k\geq 1$, we define 
\begin{equation}\label{cuore}
 z_{n,k}:=U_n^+ -T_k (U_0^+).
\end{equation}
In the proof of Lemma~\ref{step3} below, we need several properties of $\{z_{n,k}^\pm\}_n$, which we summarise next. 
 \subsection*{Properties of $\{z_{n,k}^\pm\}_n$}
From \eqref{e2} and \eqref{cuore}, we see that $\{z_{n,k}^\pm\}_n$ is bounded in $W_0^{1,\overrightarrow{p}}(\Omega)$ and also in $L^m(\Omega)$ and, up to a subsequence, 
\begin{equation}\label{us}
\begin{aligned}
	& z_{n,k}^+\to (U_0^+-T_k(U_0^+))^+=G_k(U_0^+)\ \mbox{a.e. in } \Omega\ \mbox{as } n \to \infty,\\
	& z_{n,k}^-\to (U_0^+-T_k(U_0^+))^-=0 \ \mbox{a.e. in } \Omega\ \mbox{as } n \to \infty.
\end{aligned}
\end{equation}  
Hence, up to a subsequence, using also Remark~\ref{an-sob} in the Appendix, as $n\to \infty$, we have  
\begin{equation}\label{usi} 
\begin{aligned} 
	& z_{n,k}^+\rightharpoonup G_k(U_0^+)\ \mbox{(weakly) in } W_0^{1,\overrightarrow{p}}(\Omega)\ \mbox{and in } L^m(\Omega),\\
	& z_{n,k}^+\to  G_k(U_0^+)\ \mbox{(strongly) in } L^{p}(\Omega),\\
	&  z_{n,k}^-\rightharpoonup 0\ \mbox{(weakly) in } W_0^{1,\overrightarrow{p}}(\Omega)\ \mbox{and in } L^m(\Omega),\\
	&  z_{n,k}^-\to 0 \ \mbox{(strongly) in } L^{p}(\Omega).
\end{aligned} 
\end{equation}
From \eqref{gun} and \eqref{usi}, by passing to a subsequence, we deduce that 
\begin{equation} \label{nec2} 
\begin{aligned}
& \lim_{n\to \infty} \| z_{n,k}^+\|_{L^{p}(\Omega)}= \| G_k(U_0^+)\|_{L^{p}(\Omega)}\to 0\quad \text{as } k\to \infty,\\
&  \lim_{n\to \infty} \| z_{n,k}^-\|_{L^{p}(\Omega)}=0. 
\end{aligned}
\end{equation}
Let $r\in (1,m)$ be arbitrary. By Vitali's Theorem and \eqref{us}, up to a subsequence, we get that 
\begin{equation} \label{nec1} 
\begin{aligned}
&  \lim_{n\to \infty} \| z_{n,k}^+\|_{L^{r}(\Omega)}= \| G_k(U_0^+)\|_{L^{r}(\Omega)}\to 0\quad  \text{as } k\to \infty,\\
&  \lim_{n\to \infty} \| z_{n,k}^-\|_{L^{r}(\Omega)}= 0.
 \end{aligned}
\end{equation}
Since $\mathfrak{B}$ satisfies the property $(P_2)$, from  \eqref{usi} we have, up to a subsequence,
\begin{equation*}\label{rio2}
\lim_{n \to \infty}\langle \mathfrak{B} U_n, z_{n,k}^+\rangle  
= \langle \mathfrak{B}  U_0, G_k(U_0^+)\rangle\ \ \mbox{and }\  \lim_{n \to \infty} \langle \mathfrak{B}  U_n, z_{n,k}^-\rangle = 0. 
\end{equation*}

By applying Lemma~\ref{aplu}, we obtain Lemma~\ref{step3} to be used in the proof of Proposition~\ref{red2}. 

\begin{lemma} \label{step3} There exist $\{W_k\}_{k\geq 1}$ and $\{Z_k\}_{k\geq 1}$ with $\lim_{k\to \infty} W_k=\lim_{k\to \infty} Z_k=0$ such that, up to a subsequence of $\{U_n\}$, we have 
for each $k\geq 1$
\begin{eqnarray} 
&\limsup_{n\to \infty} \|G_k(U_n)\|_{W_0^{1,\overrightarrow{p}}(\Omega)} \leq W_k,\label{woa} \\
&\limsup_{n\to \infty} \langle \mathcal A U_n, z_{n,k}^+\rangle \leq Z_k  . \label{foi}
\end{eqnarray}
\end{lemma}

\begin{proof} By a well-known diagonal argument, it suffices to show that for every $k\geq 1$, there exists a subsequence of $\{U_n\}$ such that \eqref{woa} and \eqref{foi} hold. 
Let $k\geq 1$ be arbitrary. 

We prove \eqref{woa}. 
Since $G_k(U_n)=U_n-T_k(U_n)$ and
$\partial_j T_k(U_n)=\partial_j U_n \,\chi_{\{|U_n|\le k\}}$ for $1\leq j\leq N$, 
by the coercivity assumption in \eqref{ellip}, we have
\begin{equation} \label{fus1} \begin{aligned}
\langle \mathcal A U_n,G_k(U_n)\rangle&= \sum_{j=1}^N 
\int_{\{|U_n|>k\}} A_j(U_n) \,\partial_j U_n\,dx\\
&\geq 
\nu_0 \sum_{j=1}^N \int_{\{|U_n|>k\}} |\partial_j U_n|^{p_j}\,dx=
\nu_0 \sum_{j=1}^N \|\partial_j G_k(U_n)\|^{p_j}_{L^{p_j}(\Omega)}.
\end{aligned}\end{equation}
Since $t \,G_k(t)\geq 0$ for every $t\in \mathbb R$, by the sign-condition in \eqref{cond1}, we find that $G_k(U_n)\,\Phi(U_n)\geq 0$ for all $n\geq 1$. Then, 
by Lemma~\ref{pol}, we can test 
\eqref{fff2} with $v=G_k(U_n)$ and using \eqref{cond1}, we get
\begin{equation} \label{fus2}  \begin{aligned}
\langle \mathcal A U_n,G_k(U_n)\rangle
&\leq 
\langle \mathcal A U_n,G_k(U_n)\rangle+\int_\Omega G_k(U_n)\,\Phi(U_n)\,dx
\\
&\leq \int_\Omega \Psi_n (U_n)\,G_k(U_n)\,dx+
 | \langle \mathfrak B U_n,G_k(U_n)\rangle |+
C_\Theta \int_\Omega |G_k(U_n)|\,dx.
\end{aligned}\end{equation}
Since $\mathfrak{B}$ satisfies the property $(P_2)$, using \eqref{www} 
we infer that 
\begin{equation} \label{tov}  | \langle \mathfrak B U_n,G_k(U_n)\rangle |+
C_\Theta \int_\Omega |G_k(U_n)|\,dx\to 
 | \langle \mathfrak B U_0,G_k(U_0)\rangle |+
C_\Theta \|G_k(U_0)\|_{L^1(\Omega)}\  \mbox{as } n\to \infty. 
\end{equation}
By \eqref{psi}, we see that 
\begin{equation} \label{gaz}
 \int_\Omega \Psi_n (U_n)\,G_k(U_n)\,dx 
\leq \sum_{j=1}^N 
\int_{\{|U_n|>k\} } |U_n|^{\theta_j-1} |\partial_j U_n|^{q_j} |G_k(U_n)| \,dx.
\end{equation}	
Observe that $0<|G_k(U_n)|\leq |U_n|$ on $\{|U_n|>k\}$. Hence, by Lemma~\ref{aplu}, 
for small $\tau>0$ and
$\beta\in (1/\tau,m/\tau)$ fixed, there exists a  
positive constant $C$, independent of $n$ and $k$, such that
\begin{equation} \label{seab}
\sum_{j=1}^N 
\int_{\{|U_n|>k\} } |U_n|^{\theta_j-1} |\partial_j U_n|^{q_j} |G_k(U_n)| \,dx
\leq 
C \left(  \|G_k(U_n)\|^\tau_{L^p(\Omega)}+
\|G_k(U_n)\|^\tau_{L^{\tau\beta}(\Omega)}
\right).
\end{equation}
From \eqref{gaz} and \eqref{seab}, using \eqref{www} it follows that 
\begin{equation*} \label{mik}
\limsup_{n\to \infty}  \int_\Omega \Psi_n (U_n)\,G_k(U_n)\,dx 
\leq 
C \left(  \|G_k(U_0)\|^\tau_{L^p(\Omega)}+
\|G_k(U_0)\|^\tau_{L^{\tau\beta}(\Omega)}
\right):= \mathfrak R_k.
\end{equation*}
Using this fact, jointly with \eqref{fus1}, \eqref{fus2} and \eqref{tov}, we arrive at 
\begin{equation} \label{lav} 
\nu_0 \limsup_{n\to \infty} \sum_{j=1}^N \|\partial_j G_k(U_n)\|^{p_j}_{L^{p_j}(\Omega)}\leq \mathfrak R_k+ | \langle \mathfrak B U_0,G_k(U_0)\rangle |+
C_\Theta \|G_k(U_0)\|_{L^1(\Omega)}.
\end{equation}
Since $G_k(U_0)
\rightharpoonup 0$ (weakly) in $W_0^{1,\overrightarrow{p}}(\Omega)$ and in $L^m(\Omega)$ as $k \to \infty$, using that $\tau\beta\in (1,m)$, we find that 
 (up to a subsequence), $G_k(U_0)\to 0$ (strongly) in $L^{p}(\Omega)$ and in $L^{\tau\beta}(\Omega)$ as $k\to \infty$. Hence, $\lim_{k\to \infty} \mathfrak R_k=0$ and, moreover, the right-hand side of \eqref{lav} converges to $0$ as $k\to \infty$. The proof of \eqref{woa} is complete. 
 
\vspace{0.2cm}
We now establish \eqref{foi}. 
Let $\ell>0$ be arbitrary. 
We take $v=T_\ell(z_{n,k}^+) \in W_0^{1,\overrightarrow{p}}(\Omega)\cap L^\infty(\Omega)$ as a test function in \eqref{fff2} and, proceeding as in the proof of Lemma~\ref{pol}, by letting $\ell\to \infty$, we get that \eqref{fff2} holds for $v=z_{n,k}^+ \in W_0^{1,\overrightarrow{p}}(\Omega)\cap L^m(\Omega)$. 
This, jointly with \eqref{cond1}, implies that
\begin{equation} \label{fff2b}
	\langle \mathcal A U_n, z_{n,k}^+\rangle 
	\leq  \int_\Omega \Psi_n (U_n)\,z_{n,k}^+\,dx  +|\langle \mathfrak{B} U_n,z_{n,k}^+\rangle|+C_\Theta \int_\Omega z_{n,k}^+\,dx. 
	\end{equation}  
From the definition of $\Psi_n$ in \eqref{psi}, we have 
\begin{equation} \label{psii}
 \int_\Omega \Psi_n (U_n)\,z_{n,k}^+\,dx 
\leq \sum_{j=1}^N 
\int_{\{z_{n,k}^+>0\} } |U_n|^{\theta_j-1} |\partial_j U_n|^{q_j} z_{n,k}^+ \,dx.
\end{equation}	
Observe that $z_{n,k}^+\leq U_n$ on $\{z_{n,k}^+>0\}$.  
Then, from Lemma~\ref{aplu}, for sufficiently small $\tau>0$ and
$\beta\in (1/\tau,m/\tau)$ fixed, there exists a  
positive constant $C$, independent of $n$ and $k$, such that
\begin{equation} \label{sea1}
\sum_{j=1}^N 
\int_{\{z_{n,k}^+>0\} } |U_n|^{\theta_j-1} |\partial_j U_n|^{q_j} z_{n,k}^+ \,dx
\leq 
C \left( \|z_{n,k}^+\|^\tau_{L^p(\Omega)}+ 
\|z_{n,k}^+\|^\tau_{L^{\tau\beta}(\Omega)}
\right).
\end{equation}
By using \eqref{fff2b}, \eqref{psii}, \eqref{sea1}, \eqref{nec2} and \eqref{nec1}, we conclude \eqref{foi} with $Z_k$ given by 
	$$ Z_k:=C \left( \|G_k(U_0^+)\|^\tau_{L^p(\Omega)}+
\|G_k(U_0^+)\|^\tau_{L^{\tau\beta}(\Omega)}
\right) +|\langle \mathfrak{B}  U_0 , G_k(U_0^+)\rangle|+C_\Theta \|G_k(U_0^+)\|_{L^1(\Omega)}.
$$
From \eqref{gun}, \eqref{nec2} and \eqref{nec1}, we have
$\lim_{k\to \infty} Z_k=0$ since $\tau\beta\in (1,m)$. 

This ends the proof of Lemma~\ref{step3}.
\end{proof}

For $\lambda>0$, we define $\varphi_\lambda:\mathbb R\to \mathbb R$ as follows
\begin{equation} \label{lam}  \varphi_\lambda(t)=t \exp\,(\lambda t^2) \quad \mbox{for every } t\in \mathbb R.
\end{equation}

We define 
$I_{0}(n,k)$ by 
\begin{equation} \label{I0de} I_0(n,k):=C_\Theta \|\varphi_\lambda(z_{n,k}^-)\|_{L^1(\Omega)}
-\int_\Omega \Psi_n(U_n) \,\varphi_\lambda(z_{n,k}^-)\,dx-\langle \mathfrak B U_n, \varphi_\lambda(z_{n,k}^-)\rangle.\end{equation}

\begin{lemma} \label{I0} Up to a subsequence of $\{U_n\}_{n}$, we have $ \limsup_{n\to \infty} I_0(n,k) \leq 0$ for each $k\geq 1$. \end{lemma}

\begin{proof}
It suffices to show that for each $k\geq 1$, by passing to a subsequence of $\{U_n\}$, we have $ \limsup_{n\to \infty} I_0(n,k) \leq 0$.  
Since $U_n\rightharpoonup U_0$ and $ \varphi_\lambda(z_{n,k}^-)\rightharpoonup 0$ (weakly) in $W_0^{1,\overrightarrow{p}}(\Omega)$ as $n\to \infty$, 
by the property $(P_2)$ for $ \mathfrak{B}$, we have $ \lim_{n\to\infty}  \langle \mathfrak{B} U_n, -\varphi_\lambda(z_{n,k}^-)\rangle=0$.  
Moreover, up to a subsequence, $\varphi_\lambda(z_{n,k}^-)\to 0$ (strongly) in $L^1(\Omega)$ as $n\to \infty$. 
Thus, it remains to show that 
\begin{equation} \label{micv}
\limsup_{n\to \infty} \left(-\int_\Omega \Psi_n(U_n) \,\varphi_\lambda(z_{n,k}^-)\,dx\right)\leq 0.\end{equation}
From \eqref{psi}, we have
\begin{equation} \label{byo}
\begin{aligned} 
-\int_\Omega \Psi_n(U_n)\,\varphi_\lambda (z_{n,k}^-)\,dx
 & \leq \sum_{j\in J_1} \int_{\{U_n\leq 0\}} |U_n|^{\theta_j-1} |\partial_j U_n|^{q_j} \varphi_\lambda(z_{n,k}^-)\,dx\\
 & \le e^{\lambda k^2} \sum_{j\in J_1} \int_\Omega  |U_n|^{\theta_j-1} |\partial_j U_n|^{q_j} \,z_{n,k}^-\,dx.
 \end{aligned}
\end{equation}

\noindent Let $j \in J_1$ be arbitrary. 
In view of Lemma~\ref{aplu}, for sufficiently small $\tau>0$ and
$\beta\in (1/\tau,m/\tau)$ fixed, there exists a  
positive constant $C$, independent of $n$ and $k$, such that
\begin{equation} \label{sea2} \int_{\{|U_n|\geq z_{n,k}^-\}}  |U_n|^{\theta_j-1} |\partial_j U_n|^{q_j} \,z_{n,k}^-\,dx\leq C \left(  \|z_{n,k}^-\|^\tau_{L^p(\Omega)}
+\|z_{n,k}^-\|^\tau_{L^{\tau\beta}(\Omega)} \right).
\end{equation}
We write $\Omega$ as the union of $\{|U_n|< z_{n,k}^-\}$ and $\{|U_n|\geq z_{n,k}^-\}$. 
Since $\theta_j\geq 1$, 
we see that 
$|U_n|^{\theta_j-1}\leq (z_{n,k}^-)^{\theta_j-1}$ on $\{|U_n|< z_{n,k}^-\}$. This and \eqref{sea2} imply that
\begin{equation} \label{plan}
\int_\Omega  |U_n|^{\theta_j-1} |\partial_j U_n|^{q_j} \,z_{n,k}^-\,dx\leq 
 C \left(  \|z_{n,k}^-\|^\tau_{L^p(\Omega)}
+\|z_{n,k}^-\|^\tau_{L^{\tau\beta}(\Omega)} \right)
 + \int_\Omega  |\partial_j U_n|^{q_j} \,(z_{n,k}^-)^{\theta_j}  \,dx. 
\end{equation}
With \eqref{byo} and \eqref{plan} in mind, to conclude \eqref{micv}, it suffices to show that 
for each $j\in J_1$, each term in the right-hand side of \eqref{plan} converges to zero as $n\to \infty$. 

In light of \eqref{nec2} and \eqref{nec1}, we see that the right-hand side of \eqref{sea2} converges to $0$ as $n\to \infty$ using here that $\tau\beta\in (1,m)$. 
For every $j\in J_1$, let  
$\alpha_j\in (0,\theta_j)$ satisfy $1<\vartheta_j<m$, where  
we define $\vartheta_j=(\theta_j -\alpha_j) p_j/(p_j-q_j)$. 
Since $z_{n,k}^-\leq k$, by H\"older's inequality, we have
$$ 
 \int_\Omega  |\partial_j U_n|^{q_j} \,(z_{n,k}^-)^{\theta_j}  \,dx \leq k^{\alpha_j} \int_\Omega  |\partial_j U_n|^{q_j} \,(z_{n,k}^-)^{\theta_j-\alpha_j}  \,dx
  \leq k^{\alpha_j} \|\partial_j U_n \|_{L^{p_j}(\Omega)}^{q_j} \|z_{n,k}^{-}\|_{L^{\vartheta_j}(\Omega)}^{\theta_j-\alpha_j}. 
$$
 The choice of $\alpha_j$ yields that $ \lim_{n\to \infty} \|z_{n,k}^{-}\|_{L^{\vartheta_j}(\Omega)}= 0$. 
Then, for every $j\in J_1$, the last term in the right-hand side of \eqref{plan} converges to $0$ as $n\to \infty$. 
This completes the proof of \eqref{micv}. 
\end{proof}


\section{Proof of Proposition~\ref{red2}} \label{redpro}

As explained in Section~\ref{strat}, we conclude \eqref{gtr} and \eqref{gtree} by showing that \eqref{alu} holds.  
We observe that in Case~2, we need only prove that $ \mathcal E_{U_n}(U_n^+,U_0^+)\to 0$ in $L^1(\Omega)$ as $n\to \infty$ since all $U_n$ and, hence, $U_0$ are non-negative functions.
Similarly, we can establish the other convergence claim in \eqref{alu}. We thus show the details only for $ \mathcal E_{U_n}(U_n^+,U_0^+)$ in \eqref{alu} and leave the modifications for $ \mathcal E_{U_n}(U_n^-,U_0^-)$ to the reader noting that  instead of $z_{n,k}$ in \eqref{cuore}, one needs to work with $y_{n,k}$ defined by $ y_{n,k}:=U_n^- - T_k(U_0^-)$. 

\vspace{0.2cm}
In light of the monotonicity assumption in \eqref{ellip}, we have $ \mathcal E_{U_n}(U_n^+,U_0^+)\geq 0$ a.e. in $\Omega$. Hence, to attain \eqref{alu} for $ \mathcal E_{U_n}(U_n^+,U_0^+)$, it remains to show that
\begin{equation} \label{sst}
\limsup_{n\to \infty}  \int_\Omega \mathcal E_{U_n}(U_n^+,U_0^+)\,dx \leq 0.
\end{equation}

\vspace{0.2cm}
\noindent {\bf Notation.}
Let $\omega$ be a measurable subset of $\Omega$ and $v,w,z\in W_0^{1,\overrightarrow{p}}(\Omega)$. We introduce
\begin{equation} \label{7c}
\begin{aligned}
& E_{j,U_n}(v,w):=A_j(x,U_n(x),\nabla v(x))-A_j(x,U_n(x),\nabla w(x)),\\
& E_{n,\omega}(v,w,z):=  \sum_{j=1}^N\int_\omega E_{j,U_n} (v,w)\,\partial_j z\,dx.
\end{aligned}
\end{equation}
If either of the variables $v,w$ and $z$ or $\omega$  depends on $n$, we drop the subscript $n$ in $E_{n,\omega}(v,w,z)$.

\vspace{0.2cm}
Fix $k>0$. We define $z_{n,k}$ as in \eqref{cuore}.  
From \eqref{gk}, we see that $G_k(U_0^+)\geq 0$ a.e. in $\Omega$.
Since
 \begin{equation*}\label{bee} 
 U_n^+-U_0^+=z_{n,k}^+ - z_{n,k}^-
 -G_k(U_0^+),
 \end{equation*}  
we infer that 
 \begin{equation}\label{bewq} 
 \begin{aligned}
 \int_\Omega \mathcal E_{U_n}(U_n^+,U_0^+)\,dx=& E_{\Omega} (T_k(U_0^+),U_0^+,U_n^+-U_0^+)+E_{\Omega} (U_n^+,T_k(U_0^+),z_{n,k}^+)\\
 &+E_\Omega(U_n^+,T_k(U_0^+),-z_{n,k}^-)+E_\Omega (T_k(U_0^+),U_n^+,G_k(U_0^+)).
\end{aligned}
 \end{equation} 
We show that, up to a subsequence of $\{U_n\}$, there exist $\mu_j\in L^{p_j'}(\Omega)$ for $1\leq j\leq N$ such that
\begin{equation}\label{rio}
 \lim_{n \to\infty} 
E_\Omega (T_k(U_0^+),U_n^+,G_k(U_0^+))
 =\sum_{j=1}^N\int_\Omega \mu_j \,\partial_j G_k(U_0^+)\,dx. 
        \end{equation}
Indeed, by the growth condition in \eqref{ellip}, there exists a positive constant $C$, independent of $n$ and $k$ such that, for $1\leq j\leq N$, it holds
\begin{equation} \label{10v}
\|A_j(x,U_n,\nabla T_k(U_0^+))\|_{L^{p_j'}(\Omega)}+\|A_j(x,U_n,\nabla U_0^+)\|_{L^{p_j'}(\Omega)}+\|A_j(x,U_n,\nabla U_n^+)\|_{L^{p_j'}(\Omega)}
\leq C.
\end{equation}
Hence, passing to a subsequence of $\{U_n\}$, we can find $\mu_j\in L^{p_j'}(\Omega)$ for $1\leq j\leq N$ such that 
\begin{equation} \label{wa}
 E_{j,U_n}(T_k(U_0^+),U_n^+) \rightharpoonup \mu_j \ \mbox{(weakly) in } L^{p_j'}(\Omega)\ \mbox{as } n\to \infty.
\end{equation}
This proves the claim in \eqref{rio}. 

We complete the proof of \eqref{sst} assuming that the next two results hold. 
 \begin{lemma} \label{res1} For every $k\geq 1$, there exist $R_1(k)$ and $R_2(k)$
such that, up to a subsequence of $\{U_n\}_{n\geq 1}$, we have
\begin{equation*}  \label{claim1} 
\begin{aligned}
&  \limsup_{n\to \infty} E_{\Omega} (T_k(U_0^+),U_0^+,U_n^+-U_0^+)\leq R_1(k),\\
&
 \limsup_{n\to \infty} E_{\Omega} (U_n^+,T_k(U_0^+),z_{n,k}^+)\leq R_2(k), \end{aligned}
\end{equation*}  
where $\lim_{k\to \infty} R_1(k)=\lim_{k\to \infty} R_2(k)=0$.
\end{lemma}

\begin{lemma} \label{res2}  For every $k\geq 1$, by passing to a subsequence of $\{U_n\}_{n\geq 1}$, we have 
\begin{equation} \label{claim2} 
 \limsup_{n\to \infty} E_\Omega(U_n^+,T_k(U_0^+),-z_{n,k}^-)\leq 0.
\end{equation}
\end{lemma}

For the proof of Lemmata~\ref{res1} and \ref{res2}, we refer to 
Sections~\ref{sec-res1} and ~\ref{sec-res2}, respectively.

\noindent   
Hence, by using a diagonal argument, there exists a subsequence of $\{U_n\}_{n\geq 1}$ such that for every $k \ge 1$ \eqref{rio} holds  and Lemmata~\ref{res1} and \ref{res2} apply. 

Consequently, using also \eqref{bewq}, we deduce that   
  \begin{equation} \label{egg}  \limsup_{n\to \infty} 
 \int_\Omega \mathcal E_{U_n}(U_n^+,U_0^+)\,dx \leq R_1(k)+R_2(k)
+  \sum_{j=1}^N\int_\Omega \mu_j\,\partial_j G_k(U_0^+)\,dx
 \end{equation}
 for every integer $k\geq 1$.  

Hence, by using \eqref{gun} and  letting $k\to \infty$ in \eqref{egg}, we conclude the proof of \eqref{sst}.

\subsection{Proof of Lemma~\ref{res1}} \label{sec-res1} 
Let $k\geq 1$. By Lemma~\ref{step3}, it suffices to show that 
there exist a positive constant $C$, independent of $k$, and $R_0(k)$ with $\lim_{k \to \infty} R_0(k)= 0$ such that
\begin{eqnarray} 
& \displaystyle \limsup_{n\to \infty} E_{\Omega} (T_k(U_0^+),U_0^+,U_n^+-U_0^+)
\leq C\,\limsup_{n\to \infty} \|G_k(U_n)\|_{W_0^{1,\overrightarrow{p}}(\Omega)}-R_0(k), \label{gol} \\
&\displaystyle  \limsup_{n\to \infty} E_{\Omega} (U_n^+,T_k(U_0^+),z_{n,k}^+)\
 \leq \limsup_{n\to \infty} \langle \mathcal A U_n, z_{n,k}^+\rangle +C\,\limsup_{n\to \infty} \|G_k(U_n)\|_{W_0^{1,\overrightarrow{p}}(\Omega)}.\label{erca}
\end{eqnarray}

{\em Proof of \eqref{gol}.}
We define $L_1(n,k)$, $L_2(n,k)$ and $L_3(n,k)$ by
$$ \begin{aligned}
&L_1(n,k):=E_{\{|U_n|<k\}} (T_k(U_0^+),U_0^+,U_n^+-U_0^+),\\
&  L_2(n,k):=E_{\{ |U_n|\geq k\}} (T_k(U_0^+), U_0^+,U_n^+),\\
&L_3(n,k):=E_{\{|U_n|\geq k\}} (T_k(U_0^+),U_0^+,U_0^+)=\sum_{j=1}^N\int_{\{|U_n|\geq k\}} E_{j,U_n}(T_k(U_0^+),U_0^+)\,\partial_j U_0^+ \,dx.
\end{aligned} $$
It follows that
\begin{equation} \label{nomv}
\begin{aligned}
E_{\Omega} (T_k(U_0^+),U_0^+,U_n^+-U_0^+)=&
L_1(n,k)+L_2(n,k)-L_3(n,k).
\end{aligned} 
\end{equation}
Remark that $\chi_{\{|U_n|\geq k\}}\partial_jU_n^+=\chi_{\{U_n\geq k\}}\partial_j G_k(U_n)$ for every $1 \le j \le N$. Hence, 
by H\"older's inequality and \eqref{10v}, we obtain that 
\begin{equation} \label{lig2} 
\big |L_2(n,k)\big | =\left|E_{\{ U_n\geq k\}} (T_k(U_0^+), U_0^+,G_k(U_n))\right| \leq C \|G_k(U_n)\|_{W_0^{1,\overrightarrow{p}}(\Omega)},
\end{equation}
where $C>0$ is a constant independent of $n$ and $k$.

By the Dominated Convergence Theorem, we see that $\lim_{k\to \infty} R_0(k)=0$, where
we define
\begin{equation} \label{wion}
R_0(k):= \sum_{j=1}^N \int_{\{U_0\geq k\}} E_{j,U_0}(0,U_0) \,\partial_j U_0\,dx.
\end{equation}
Using \eqref{nomv} and \eqref{lig2}, we conclude the proof of \eqref{gol} by showing that 
\begin{equation} \label{wiona} \lim_{n\to \infty} L_3(n,k)=R_0(k)\quad \mbox{and}\quad \lim_{n\to \infty} L_1(n,k)=0.
\end{equation}

Let $1\leq j\leq N$ be arbitrary.  
We have \begin{equation} \label{les} \chi_{\{|U_n|\geq k\}}\partial_j U_0^+\to \chi_{\{U_0\geq k\}}\partial_j U_0\quad \mbox{(strongly) in }L^{p_j}(\Omega)\ \mbox{ as } n\to \infty.\end{equation} 
Since $E_{j,U_n}(T_k(U_0^+),U_0^+)\to E_{j,U_0}(T_k(U_0^+),U_0^+)$ a.e. in $\Omega$ as $n\to \infty$,  
using \eqref{10v} and passing to a subsequence of $\{U_n\}$, we find that
\begin{equation} \label{omm} E_{j,U_n}(T_k(U_0^+),U_0^+)\rightharpoonup E_{j,U_0}(T_k(U_0^+),U_0^+)\ \mbox{(weakly) in } L^{p_j'}(\Omega)\ \mbox{ as }n\to \infty. 
\end{equation}
Hence, from \eqref{les} and \eqref{omm}, we infer that  as $n\to \infty$ 
$$
E_{j,U_n}(T_k(U_0^+),U_0^+) \chi_{\{|U_n|\geq k\}}\partial_j U_0^+\to 
E_{j,U_0}(T_k(U_0^+),U_0^+) \chi_{\{U_0\geq k\}}\partial_j U_0\ \ \mbox{(strongly) in } L^1(\Omega).
$$ 
This proves the first limit in \eqref{wiona} since $E_{j,U_0}(T_k(U_0^+),U_0^+) =E_{j,U_0} (0,U_0)$ on $\{U_0\geq k\}$.

Let $1\leq j\leq N$ be arbitrary. We note that the sequences 
$\{ E_{j,U_n} (T_k(U_0^+),U_0^+) \, \chi_{\{|U_n|\leq k\}} \}_{n\geq 1}$ and 
$\{ A_j(x,U_n,\nabla T_k(U_0^+)) \,\chi_{\{|U_n|\leq k\}}\}_{n\geq 1} $ 
are uniformly integrable in 
$L^{p_j'}(\Omega)$ with respect to $n$. Since $U_n \to U_0$ a.e. in $\Omega$ as $ n\to \infty$, 
by Vitali's Theorem, we obtain that as $n\to \infty$
\begin{eqnarray}
& E_{j,U_n} (T_k(U_0^+),U_0^+) \, \chi_{\{|U_n|\leq k\}} \to E_{j,U_0} (T_k(U_0^+),U_0^+) \, \chi_{\{|U_0|\leq k\}} 
\  \mbox{ in } L^{p_j'}(\Omega),  \label{vioc}  \\
& A_j(x,U_n,\nabla T_k(U_0^+)) \,\chi_{\{|U_n|\leq k\}}\to  
A_j(x,U_0,\nabla T_k(U_0^+)) \,\chi_{\{|U_0|\leq k\}}\  \mbox{in } L^{p_j'}(\Omega).  \label{unt}
\end{eqnarray}
Recall that $\partial_j U_n^+\rightharpoonup \partial_j U_0^+$ and $\partial_j z_{n,k}^+\rightharpoonup \partial_j G_k(U_0^+) $ (weakly) in $L^{p_j}(\Omega)$ as $n\to \infty$. Since $\chi_{\{|U_0|\leq k\}}\, \partial_j G_k(U_0^+) =0$, using \eqref{vioc} and \eqref{unt}, we conclude that as $n\to \infty$
\begin{eqnarray}
 &\displaystyle
 E_{j,U_n} (T_k(U_0^+),U_0^+) \, \chi_{\{|U_n|\leq k\}} \,\partial_j(U_n^+-U_0^+)\to 0 \ \mbox{in } L^1(\Omega), 
 \label{asis}\\
 & \displaystyle 
 A_j(x,U_n,\nabla T_k(U_0^+)) \,\chi_{\{|U_n|\leq k\}}\, \partial_j z_{n,k}^+\to 0  \ \mbox{in } L^1(\Omega).
 \label{asis22}
\end{eqnarray}

Moreover, from \eqref{asis} and 
the squeeze law, we get the second limit in \eqref{wiona}.

\vspace{0.2cm}
{\em Proof of \eqref{erca}.} We define $P_1(n,k)$ and $P_2(n,k)$ as follows
$$ 
\begin{aligned}
& P_1(n,k):= \sum_{j=1}^N  \int_{\{|U_n|<k\}} A_j(x,U_n,\nabla T_k(U_0^+)) \,\partial_j z_{n,k}^+\,dx,\\
& P_2(n,k):=  \sum_{j=1} ^N \int_{\{U_n \ge k\}} A_j(x,U_n,\nabla T_k(U_0^+)) \,\partial_j(- z_{n,k}^+)\,dx.
\end{aligned} $$
Since on the set $\{ U_n\geq k\}$ we have $z_{n,k}^+=U_n-T_k(U_0^+)$ and $\partial_j U_n=\partial_j G_k(U_n)$ for $1\leq j\leq N$,  
the definition of $P_2(n,k)$ yields that $P_2(n,k)=P_{2,1}(n,k)+P_{2,2}(n,k)$, where
\begin{equation*} \label{vep2} 
\begin{aligned}
& P_{2,1}(n,k):=-  \sum_{j=1}^N\int_{\{U_n \ge k\}} A_j(x,U_n,\nabla T_k (U_0^+))\, \partial_j G_k(U_n)\,dx,\\
& P_{2,2}(n,k):= \sum_{j=1}^N \int_{\{U_n \ge k\}\cap \{0<U_0<k\}}  A_j(x,U_n,\nabla U_0)\,\partial_j U_0\,dx.
\end{aligned}\end{equation*}

From \eqref{asis22}, we get $\lim_{n\to \infty} P_1(n,k)=0$. 
When $z_{n,k}^+>0$, then $U_n^+>0$ so that 
\begin{equation*} \label{bor}
 U_n=U_n^+\ \mbox{on } \{z_{n,k}^+>0\}\quad \mbox{and}\quad 
\langle \mathcal A U_n^+, z_{n,k}^+\rangle =\langle \mathcal A U_n, z_{n,k}^+\rangle .
\end{equation*}
Hence, we arrive at 
\begin{equation*} \label{dop}
E_{\Omega} (U_n^+,T_k(U_0^+),z_{n,k}^+)
= \langle \mathcal A U_n, z_{n,k}^+ \rangle  -  P_1(n,k)+ P_2(n,k).
\end{equation*}

Consequently, we end the proof of \eqref{erca} once we show that
\begin{equation} \label{vepa} \limsup_{n\to \infty} P_2(n,k)\leq C \limsup_{n\to \infty} \|G_k(U_n)\|_{W_0^{1,\overrightarrow{p}}(\Omega)},
\end{equation} where $C$ is a positive constant independent of $k$.

As for \eqref{lig2}, we find a positive constant $C$, independent of $n$ and $k$ such that
\begin{equation} \label{laba} |P_{2,1}(n,k)|\leq C\|G_k(U_n)\|_{W_0^{1,\overrightarrow{p}}(\Omega)}.
\end{equation}
For $1\leq j\leq N$, by the Dominated Convergence Theorem, we get
 $ \chi_{\{ U_n\geq k\}}\chi_{\{0<U_0<k\}}\partial_j U_0\to 0$ (strongly) in $ L^{p_j}(\Omega)$ as $n\to \infty$. 
Since $A_j(x,U_n,\nabla U_0)\rightharpoonup A_j(x,U_0,\nabla U_0)$ (weakly) in $L^{p_j'}(\Omega)$ as $n\to \infty$, we infer that $ \lim_{n\to \infty} P_{2,2}(n,k)=0$. This, together with \eqref{laba}, proves 
\eqref{vepa}.

\vspace{0.2cm}
The proof of Lemma~\ref{res1} is now complete. \qed

\begin{rem} \label{rema63} We point out that the reasoning in the proof of
$\lim_{n\to \infty} L_1(n,k)=0$ cannot be extended to get 
$\lim_{n\to \infty}  E_{\Omega} (T_k(U_0^+),U_0^+,U_n^+-U_0^+)=0$. Indeed, in the growth condition in \eqref{ellip}, we have taken the greatest exponent for $|t|$ regarding the anisotropic Sobolev inequalities
so that we don't have the compactness of the embedding $W_0^{1,\overrightarrow{p}}(\Omega)\hookrightarrow L^{p^\ast}(\Omega)$. Hence, 
we cannot infer that 
$\{ E_{j,U_n} (T_k(U_0^+),U_0^+)  \}_{n\geq 1}$ 
is uniformly integrable in 
$L^{p_j'}(\Omega)$ with respect to $n$.
\end{rem}

\subsection{Proof of Lemma~\ref{res2}} \label{sec-res2}
We need to show that, up to a subsequence,  \eqref{claim2} holds, namely,
\begin{equation} \label{clam2} 
\limsup_{n\to \infty} 
\int_\Omega D(n,k)\,dx\leq 0,
\end{equation}
where we define $D(n,k)$ by 
\begin{equation} \label{sho}  D(n,k):=\sum_{j=1}^N \left[A_j(x,U_n,\nabla U_n^+)-A_j(x,U_n,\nabla T_k(U_0^+))\right] \partial_j (-z_{n,k}^-).
\end{equation}
We choose $\lambda=\lambda(k)>0$ large such that $ 4\nu_0^2 \,\lambda>\phi^2(k)$, where $\phi$ appears in the growth assumption on $\Phi$ in \eqref{cond1}, while $\nu_0>0$ 
is given by the coercivity condition in \eqref{ellip}. 
We define $\varphi_\lambda$ as in \eqref{lam}. 
Our choice of $\lambda$ ensures that for every $ t\in \mathbb R$ 
\begin{equation} \label{bea}  \lambda t^2-\frac{\phi(k)}{2\nu_0} |t|+\frac{1}{4}>0 \quad  \text{and, hence,}\quad 
 \varphi_\lambda'(t) -\frac{\phi(k)}{\nu_0} \, |\varphi_\lambda(t)| >\frac{1}{2}. 
\end{equation} 
Recall that $I_{0}(n,k)$ is defined in \eqref{I0de}. For convenience, we set
\begin{equation} \label{hld} 
\begin{aligned}  
& I_1(n,k):= \sum_{j=1}^N \int_\Omega A_j(x,U_n,\nabla T_k(U_0^+))\, \partial_j (\varphi_\lambda(z_{n,k}^-))\,dx+ E_\Omega (U_n,U_n^+,\varphi_\lambda(z_{n,k}^-)),\\
& I_2(n,k):= \sum_{j=1}^N \int_\Omega
  \left[ A_j(x,U_n,\nabla U_n^+)\,
 \partial_j T_k(U_0^+)+A_j(x,U_n,\nabla T_k (U_0^+))\,\partial_j z_{n,k}\right]  \varphi_\lambda( z_{n,k}^-)\,dx.\\
\end{aligned}
\end{equation}

We divide the proof of \eqref{clam2} into two steps.

\vspace{0.2cm}
{\sc Step 1.} {\em Let $\nu_0$ and $c,\phi$ be as in \eqref{ellip} and \eqref{cond1}, respectively. We have}
\begin{equation} \label{erb}
\frac{1}{2} \int_\Omega D(n,k)\,dx\leq I_0(n,k)+I_1(n,k)+\phi(k) \left[ \frac{I_2(n,k)}{\nu_0}+\int_\Omega c(x) \,\varphi_\lambda (z_{n,k}^-)\,dx\right].
\end{equation}

{\em Proof of STEP 1.} 
On the set $\{U_n>T_k(U_0^+)\}$, we have $D(n,k) =0$ since
$z_{n,k}^-=0$ and, hence, $\partial_j z_{n,k}^-=0$ for $1\leq j\leq N$. In turn, on the set $\{U_n\leq T_k(U_0^+)\}$, we find that 
$z_{n,k}^-=T_k(U_0^+)-U_n^+$ and, by the monotonicity condition in \eqref{ellip}, it follows that $D(n,k)\geq 0$. 
Hence, we have 
\begin{equation} \label{mod} D(n,k)\geq 0,\quad  z_{n,k}^-\in [0,k],\quad 
\varphi_\lambda(z_{n,k}^-) \,\partial_j (-z_{n,k}^-)=\varphi_\lambda(z_{n,k}^-) \,\partial_j (U_n^+-T_k(U_0^+))
\ \mbox{ a.e. in } \Omega \end{equation}
for each $1\leq j\leq N$. Then, 
using \eqref{bea}, we find that
\begin{equation} \label{fib1}
\frac{1}{2}\int_\Omega D(n,k) \,dx
\leq \int_\Omega D(n,k) \,\varphi_\lambda'(z_{n,k}^-) \,dx -\frac{\phi(k)}{\nu_0} \int_\Omega 
D(n,k)\,\varphi_\lambda(z_{n,k}^-)\,dx.
\end{equation}

\noindent From \eqref{sho} and \eqref{hld}, we observe that
\begin{equation} \label{fib2} 
\int_\Omega D(n,k) \,\varphi_\lambda'(z_{n,k}^-) \,dx=E_\Omega (U_n^+,T_k(U_0^+),-\varphi_\lambda(z_{n,k}^-))
=I_1(n,k)+ \langle \mathcal A U_n, -\varphi_\lambda(z_{n,k}^-)\rangle.
\end{equation}
Since $ z_{n,k}^-\in W_0^{1,\overrightarrow{p}}(\Omega)\cap L^\infty(\Omega)$, we have $\varphi_\lambda(z_{n,k}^-)\in W_0^{1,\overrightarrow{p}}(\Omega)\cap L^\infty(\Omega)$ so that $\varphi_\lambda(z_{n,k}^-) $ can be taken as a test function in \eqref{fff2}. Hence, using \eqref{cond1} and $I_0(n,k)$ given by \eqref{I0de}, we find that
\begin{equation} \label{fib3}   \langle \mathcal A U_n, -\varphi_\lambda(z_{n,k}^-)\rangle\leq \int_\Omega  \Phi(U_n)\, \varphi_\lambda(z_{n,k}^-)\,dx+I_0(n,k).
\end{equation}
In view of \eqref{fib1}--\eqref{fib3}, we conclude \eqref{erb} by showing that 
\begin{equation} \label{doni}
 \int_\Omega \Phi(U_n)\, \varphi_\lambda(z_{n,k}^-)\,dx
 \leq \phi(k) \left[ \frac{  \int_\Omega 
D(n,k)\,\varphi_\lambda(z_{n,k}^-)\,dx+I_2(n,k)}{\nu_0}+\int_\Omega c(x) \,\varphi_\lambda (z_{n,k}^-)\,dx\right]. \end{equation}

\noindent To this end, we next prove that 
\begin{equation} \label{don}
 \int_\Omega  \Phi(U_n)\, \varphi_\lambda(z_{n,k}^-)\,dx\leq \frac{\phi(k)}{\nu_0} I_3(n,k) +\phi(k) \int_\Omega c(x) \,\varphi_\lambda (z_{n,k}^-)\,dx, 
\end{equation}
where $I_{3}(n,k)$ is defined by 
\begin{equation} \label{don2}
I_{3}(n,k):= \sum_{j=1}^N \int_{\{0<U_n\leq T_k(U_0^+)\}} A_j(x,U_n,\nabla U_n)\, \partial_j U_n\, \varphi_\lambda (z_{n,k}^-)\,dx.
\end{equation}
Indeed, since $z_{n,k}^-=0$ on $\{U_n^+>T_k(U_0^+)\}$ and $ \Phi(U_n) \leq 0\leq \varphi_\lambda (z_{n,k}^-)$ on $\{U_n\leq 0\}$, we have
$$  
\int_\Omega \Phi(U_n)\, \varphi_\lambda(z_{n,k}^-)\,dx=\int_{\{U_n^+\leq T_k(U_0^+)\}}  \Phi(U_n)\, \varphi_\lambda(z_{n,k}^-)\,dx\leq
\int_{\{0<U_n\leq T_k(U_0^+)\}} 
\Phi(U_n)\, \varphi_\lambda(z_{n,k}^-)\,dx.
$$
Next, from the growth condition on $\Phi$ in \eqref{cond1} and the coercivity condition in \eqref{ellip}, we get
$$\begin{aligned} \int_{\{0<U_n\leq T_k(U_0^+)\}} 
\Phi(U_n)\, \varphi_\lambda(z_{n,k}^-)\,dx &\leq 
\phi(k) \int_{\{0<U_n\leq T_k(U_0^+)\}} \left(\sum_{j=1}^N |\partial_j U_n|^{p_j} +c(x)\right) \varphi_\lambda (z_{n,k}^-)\,dx\\ 
& \leq \frac{\phi(k)}{\nu_0} I_{3}(n,k) +\phi(k) \int_\Omega c(x) \,\varphi_\lambda (z_{n,k}^-)\,dx.
\end{aligned}$$ 
Consequently, the assertion of \eqref{don} is proved.  

Since $\varphi(z_{n,k}^-)=0$ on $\{U_n>T_k(U_0^+)\}$, we have
\begin{equation} \label{gxb1} 
 I_{3}(n,k)=
 \sum_{j=1}^N \int_{\Omega}  A_j(x,U_n,\nabla U_n^+)\, \partial_j U_n^+\, \varphi_\lambda (z_{n,k}^-)\,dx 
=\int_\Omega D(n,k)\varphi_\lambda(z_{n,k}^-)\, dx+I_2(n,k),
\end{equation} 
where $I_2(n,k)$ is given in \eqref{hld}. From \eqref{don} and \eqref{gxb1}, we attain \eqref{doni}. This ends the proof of \eqref{erb} and of Step 1. \qed

\vspace{0.2cm} 
{\sc Step 2.} {\em Proof of \eqref{clam2} concluded.}

\vspace{0.2cm}
{\em Proof of STEP 2.} 
\vspace{0.2cm}
Since $0\leq c(x) \,\varphi_{\lambda} (z_{n,k}^-)\leq k\,e^{\lambda k^2}\,c(x) $ a.e. in $\Omega$ and $c(x)\, \varphi_\lambda( z_{n,k}^-)\to 0$ a.e. in $\Omega$ as $n\to \infty$, by the Dominated Convergence Theorem, we have $\lim_{n\to \infty} \int_\Omega c(x) \,\varphi_\lambda (z_{n,k}^-)\,dx=0$. 
In view of Step~1 and Lemma~\ref{I0}, we conclude Step 2 by showing that, up to a subsequence, 
\begin{equation} \label{I12} \lim_{n\to \infty} I_1 (n,k)=0\quad \mbox{and}\quad \lim_{n\to \infty} I_2(n,k)=0.\end{equation}

\noindent From \eqref{usi}, we have that
both $z_{n,k}^-$ and $ \varphi_\lambda(z_{n,k}^-)$ converge to $0$  
weakly in $W_0^{1,\overrightarrow{p}}(\Omega)$ as $n\to \infty$. In particular, for each $1\leq j\leq N$, it holds 
\begin{equation} \label{wom} \partial_j (\varphi_\lambda(z_{n,k}^-))\rightharpoonup 0\ \mbox{ (weakly) in }L^{p_j}(\Omega)\ \mbox{as }n\to \infty.\end{equation}
We recall that $z_{n,k}^-=T_k(U_0^+)$ on $\{U_n\leq 0\}$ and $\varphi_\lambda(z_{n,k}^-)=0$ on $\{U_n>T_k(U_0^+)\}$. 

\vspace{0.2cm}
1) We show that $\lim_{n\to \infty} I_{1}(n,k)=0$. If $I_{1,1}(n,k)$ is the first term in $I_1(n,k)$ in \eqref{hld}, then   
\begin{equation} \label{jot} \begin{aligned} 
I_{1,1}(n,k)&= \sum_{j=1}^N \int_{\{U_n< 0\}} A_j(x,U_n,\nabla T_k(U_0^+)) \,\partial_j (\varphi_\lambda(T_k(U_0^+))\,dx\\
&\quad +  \sum_{j=1}^N \int_{\{0\leq U_n\leq T_k(U_0^+)\}} A_j(x,U_n,\nabla T_k(U_0^+)) \,\partial_j (\varphi_\lambda(z_{n,k}^-))\,dx.
\end{aligned} \end{equation}
Let $1\leq j\leq N$ be arbitrary. By the Dominated Convergence Theorem, we get
\begin{equation} \label{zmn}
 \chi_{\{U_n\leq 0\}}\,\partial_j (\varphi_\lambda(T_k(U_0^+)))\to 0\ \mbox{ (strongly) in }
L^{p_j}(\Omega)\ \mbox{as } n\to \infty. \end{equation} 
Hence, using that $A_j(x,U_n,\nabla T_k(U_0^+))\rightharpoonup A_j(x,U_0,\nabla T_k(U_0^+))$ (weakly) in $L^{p_j'}(\Omega)$ as $n\to \infty$, we obtain that the first term in the right-hand side of \eqref{jot} converges to $0$ as $n\to \infty$. Furthermore, on the set 
$\{0\leq U_n\leq T_k(U_0^+)\}$, we have $U_n\leq k$ and the family $\{|A_j(x,U_n,\nabla T_k(U_0^+))|^{p_j'}\}_{n\geq 1}$ is uniformly integrable. 
Then, based on $A_j(x,U_n,\nabla T_k(U_0^+))\to A_j(x,U_0,\nabla T_k(U_0^+))$ a.e. in $\Omega$ as $n\to \infty$, by Vitali's Theorem, we infer the strong convergence as $n\to \infty$ 
\begin{equation} \label{forte}  
A_j(x,U_n,\nabla T_k(U_0^+))\, \chi_{\{0\leq U_n\leq T_k(U_0^+)\}}\to A_j(x,U_0,\nabla T_k(U_0^+))\, \chi_{\{0\leq U_0\leq T_k(U_0^+)\}}\ \mbox{in } L^{p_j'}(\Omega) 
. \end{equation} This, jointly with \eqref{wom}, implies that the second term in the right-hand side of \eqref{jot} converges to $0$ as $n\to \infty$. This proves that
$\lim_{n\to \infty} I_{1,1}(n,k)=0$.

\vspace{0.2cm}
We now show that the remaining term in the definition of $I_1(n,k)$ in \eqref{hld} converges to $0$ as $n\to \infty$, that is, $\lim_{n\to \infty} E_\Omega (U_n,U_n^+,\varphi_\lambda(z_{n,k}^-))=0$.
By the definition of $E_{n,\omega}(\cdot,\cdot,\cdot)$
in \eqref{7c}, since $z_{n,k}^-=T_k(U_0^+)=-z_{n,k}$ on $\{U_n\leq 0\}$, we get
$$  
E_\Omega (U_n,U_n^+,\varphi_\lambda(z_{n,k}^-))
=\sum_{j=1}^N\int_{\{U_n\leq 0\}}  \left[A_j(x,U_n,\nabla U_n)-A_j(x,U_n,\nabla U_n^+)\right] \partial_j (\varphi_\lambda(T_k(U_0^+))\,dx.
$$
As for \eqref{wa}, by passing to a subsequence of $\{U_n\}$,  we get that 
$$ \{A_j(x,U_n,\nabla U_n)\}_n,\quad \{A_j(x,U_n,\nabla U_n^+)\}_n\quad \mbox{and}\quad \{A_j(x,U_n,\nabla T_k (U_0^+))\}_n
$$ 
converge weakly in $L^{p_j'}(\Omega)$ as $ n\to \infty$. This and \eqref{zmn} yield $\lim_{n\to \infty} E_\Omega (U_n,U_n^+,\varphi_\lambda(z_{n,k}^-))=0$.

\vspace{0.2cm}
2) We show that $\lim_{n\to \infty} I_2(n,k)=0$. From \eqref{us} and $0\leq z_{n,k}^-\leq k$ a.e in $\Omega$, we have $\varphi_\lambda(z_{n,k}^-)\to 0$ a.e. in $\Omega$ as $n\to \infty$ and $0\leq \varphi_\lambda (z_{n,k}^-)\leq k \,e^{\lambda k^2}$ a.e. in $\Omega$. Thus, by the Dominated Convergence Theorem, for each $1\leq j\leq N$, we find that as $n\to \infty$
\begin{equation*} \label{runn}
 \varphi_\lambda( z_{n,k}^-)\,\partial_j T_k(U_0^+) \to 0\ \mbox{and} \ \ \chi_{\{U_n\leq 0\}}\, \varphi_\lambda(T_k(U_0^+))\,\partial_j z_{n,k} \to 0\ \mbox{(strongly) in } L^{p_j}(\Omega).
\end{equation*}
Consequently, we get \begin{equation} \label{farr} \begin{aligned} 
& \sum_{j=1}^N  \int_\Omega
A_j(x,U_n,\nabla U_n^+)\,
\varphi_\lambda( z_{n,k}^-)\,  \partial_j T_k(U_0^+) \,dx\to 0\quad \mbox{as } n\to \infty,\\
 &   \sum_{j=1}^N \int_{\{U_n<0\}}A_j(x,U_n,\nabla T_k (U_0^+)) \,\varphi_\lambda (T_k(U_0^+))\,\partial_j z_{n,k}\,dx
\to 0\quad \mbox{as } n\to \infty.
 \end{aligned}
\end{equation}

For $1\leq j\leq N$, we have $ z_{n,k}=-z_{n,k}^-$ on $\{0\leq U_n\leq T_k(U_0^+)\}$ so that 
using \eqref{forte} and the weak convergence $\partial_j z^-_{n,k}\rightharpoonup 0$ in $L^{p_j}(\Omega)$ as $n\to \infty$, we arrive at 
$$
A_j(x,U_n,\nabla T_k (U_0^+)) \,\chi_{\{0\leq U_n\leq T_k(U_0^+)\} }\,\partial_j z_{n,k}\to 0\ \ \mbox{in } L^1(\Omega) \ \  \mbox{as } n\to \infty.
$$
It follows that
\begin{equation} \label{firt2} 
 \sum_{j=1}^N\int_{\{0\leq U_n\leq T_k(U_0^+)\}} 
A_j(x,U_n,\nabla T_k (U_0^+)) \,\varphi_\lambda(z_{n,k}^-)\,\partial_j z_{n,k}\,dx\to 0\ \mbox{as } n\to \infty.
\end{equation}
Since $\varphi_\lambda(z_{n,k}^-)=0$ on $\{U_n>T_k(U_0^+)\}$, from \eqref{farr} and \eqref{firt2}, we find that $\lim_{n\to \infty} I_2(n,k)=0$, completing the proof of \eqref{I12} and of Step 2.\qed

This finishes the proof of Lemma~\ref{res2}. \qed 

\section{Proof of Theorem~\ref{teo2}} \label{sec7}

Let \eqref{IntroEq0}, \eqref{ass}, \eqref{m} and \eqref{ellip}--\eqref{etc} hold and, in addition, $\min_{1\leq j \leq N} \mathfrak a_j>0$. 
Here, we suppose that the function $f$ in \eqref{eq1} is not identically 0. In Case 2, we assume that $f\geq 0$ a.e. in $\Omega$. 
We approximate $f$ by a sequence of functions $f_n\in L^\infty(\Omega)$, taking for instance
$$f_n(x):=\frac{f(x)}{1+ |f(x)|/n}\quad \mbox{ for a.e. }  x\in \Omega.$$
In particular, in Case 2, we have $f_n\geq 0$ a.e. in $\Omega$. We remark the following properties
\begin{equation} \label{nice}
 |f_n|\leq |f|\ \mbox{a.e. in }\Omega, \ f_n\to f \ \mbox{a.e. in }\Omega \ \mbox{ and }f_n\to f \ \mbox{(strongly) in } L^1(\Omega)\ \mbox{ as }n\to \infty.
\end{equation}

\noindent With this approximation, assuming that $\mathfrak B$ belongs to $\mathfrak {BC}((1-\varepsilon)\,\mathcal A)$ for some $\varepsilon\in (0,1)$, in either Case 1 or Case 2, we can apply Theorem~\ref{mainth} for the problem generated by \eqref{eq1} with $f_n$ instead of $f$. 
Then such an approximate problem admits at least a solution $u_n$, namely, 
\begin{equation} \label{eqn}
\left\{  \begin{aligned} 
&\mathcal A u_n +\Phi(u_n)+\Theta(u_n) =\Psi(u_n)+
\mathfrak{B}u_n+f_n\quad \mbox{in } \Omega,\\ 
& u_n\in W_0^{1,\overrightarrow{p}}(\Omega), \quad \Phi(u_n)\in L^1(\Omega).
\end{aligned} \right.
\end{equation}
Moreover,  $\Phi(u_n)\,u_n$ and $\Psi(u_n)\,u_n$ belong to $ L^1(\Omega)$, $I_{u_n}(v):=\int_{\{|u_n|>0\}} \Psi(u_n)\,v\,dx\in \R$ and
	\begin{equation} \label{baaa}
S_{u_n,\Theta,f_n}(v)=\
I_{u_n}(v) +\langle \mathfrak{B}u_n,v\rangle \qquad \mbox{for every } v\in W_0^{1,\overrightarrow{p}}(\Omega)\cap L^\infty(\Omega).
	\end{equation} 
Furthermore, \eqref{baaa} holds for $v=u_n$. 

In the rest of the paper, we understand that $u_n$ is a solution of \eqref{eqn} with the above-mentioned properties that we obtain from Theorem~\ref{mainth}.

But, unlike Theorem~\ref{mainth}, to prove that $\{u_n\}_{n\geq 1}$ is uniformly bounded in $W_0^{1,\overrightarrow{p}}(\Omega)$, we need 
$\mathfrak{B}$ to satisfy the extra condition $(P_3)$ associated with $(1-\varepsilon)\,\mathcal A$, namely, 
for every $k>0$, 
\begin{equation} \label{exb}
(1-\varepsilon)\,\nu_0  \sum_{j=1}^N ||\partial_j u||_{L^{p_j}(\Omega)}^{p_j} -\langle \mathfrak{B} u,T_k (u)\rangle\to \infty \quad \mbox{as } \|u\|_{W_0^{1,\overrightarrow{p}}(\Omega)}
\to \infty.
\end{equation}
Thus, $\mathfrak B$ belongs to the class $\mathfrak{BC}_+((1-\varepsilon)\,\mathcal A)$. This assumption is made throughout this section. 

All the results in this section are derived in the framework of Theorem~\ref{teo2}.

\subsection{{\em A priori} estimates}
In order to obtain {\em a priori} estimates for $u_n$ solving \eqref{eqn} we need the following result, which is in the spirit of Lemma \ref{intorj}. 

\begin{lemma} \label{sima}
Let $k\geq 1$ be arbitrary and $\Phi_0$ be given by  \eqref{phiz}.  
Then, for every $\rho>0$, there exists a constant $C_\rho>0$ such that for all $n\geq 1$, we have
\begin{equation} \label{fik} 
 I_{u_n}(T_k(u_n))\leq \rho \sum_{j=1}^N  \|\partial_j u_n\|^{p_j}_{L^{p_j}(\Omega)}+\rho \int_{\{|u_n|\geq k\}} 
 |\Phi_0(u_n)|\,dx 
+C_\rho.
\end{equation}
\end{lemma}

\begin{rem} The property $\Phi(u_n)\,u_n\in L^1(\Omega)$ and \eqref{etc} ensure that $\int_{\{|u_n|\geq k\}} 
 |\Phi_0(u_n)|\,dx<\infty $ for all $k\geq 1$ and $n\geq 1$. 
\end{rem}

We define
$$  \mathfrak I_{m-1} (k,u_n)=\int_{ \{|u_n|>k\}} |u_n|^{m-1}\,dx,\quad
 \mathfrak I_{m-1,p_j} (k,u_n):= \int_{\{|u_n|> k\}} |u_n|^{m-1} |\partial_j u_n|^{p_j}\,dx.
$$
 If in the definition of $ \mathfrak I_{m-1,p_j} (k,u_n)$, we replace $m-1$ and $p_j$ by $\theta_j-1$ and $q_j$, respectively, then we obtain
$\mathfrak I_{\theta_j-1,q_j} (k,u_n)$.

\begin{proof}[Proof of Lemma~\ref{sima}] 
We observe that $$\mathfrak I_{m-1}(k,u_n)+\sum_{j=1}^N \mathfrak I_{m-1,p_j}(k,u_n)\leq \frac{1}{\min_{0\leq k\leq N} \mathfrak{a}_k} \int_{\{|u_n|\geq k\}} |\Phi_0(u_n)|\,dx<\infty.$$
Using the definition of $I_{u_n}(v)$, we see that
\begin{equation} \label{gas} \begin{aligned}  
 I_{u_n}(T_k(u_n))&=\sum_{j=1}^N \int_{\{0<|u_n| \le k\}} |u_n|^{\theta_j} |\partial_j u_n|^{q_j}\,dx+k
\sum_{j=1}^N \mathfrak I_{\theta_j-1,q_j} (k,u_n) \\
& \leq 2 \sum_{j=1}^N k^{\theta_j} \int_\Omega |\partial_j u_n|^{q_j} \,dx+k \sum_{j\in J_1}   \mathfrak I_{\theta_j-1,q_j} (k,u_n).
\end{aligned}
\end{equation} 
Let $\delta>0$ be arbitrary. By H\"older's inequality and Young's inequality, there exists a constant $C_\delta>0$ such that for every $n\geq 1$,
\begin{equation} \label{zz} \sum_{j=1}^N\int_\Omega |\partial_j u_n|^{q_j} \,dx\leq \sum_{j=1}^N \left({\rm meas}\,(\Omega)\right)^{1-\frac{q_j}{p_j}} \|\partial_j u_n\|_{L^{p_j}(\Omega)}^{q_j}\leq
\delta \sum_{j=1}^N \|\partial_j u_n\|_{L^{p_j}(\Omega)}^{p_j}+C_\delta.
\end{equation}

Let $j\in J_1$ be arbitrary. To estimate $\mathfrak I_{\theta_j-1,q_j} (k,u_n)$, we distinguish several cases:

\vspace{0.2cm}
Case (a) Let $q_j=0$ and $\theta_j\geq p+1$. Then, $j\in N_{\overrightarrow{\mathfrak{a}}}$ and from \eqref{m}, we have 
$m> \theta_j$. Hence, by Young's inequality, there exists a constant $C_{\delta}>0$ such that
for all $ n\geq 1$,
 \begin{equation} \label{miv}  \mathfrak I_{\theta_j-1,q_j} (k,u_n)=\int_{\{|u_n|> k\}} |u_n|^{\theta_j-1} \,dx\leq \delta \,
 \mathfrak I_{m-1}(k,u_n)+C_\delta. 
 \end{equation}

Case (b) Let $q_j=0$ and $\theta_j< p+1$. We set $\gamma_j=1-(\theta_j-1)/p$ and $C_j=\left({\rm meas}\,(\Omega)\right)^{\gamma_j}$. 
By H\"older's inequality, Remark~\ref{an-sob} and Lemma~\ref{young} in the Appendix, we find constants $C>0$ 
(depending on $N$, $\overrightarrow{p}$, $\theta_j$ and 
$\mbox{meas}\,(\Omega)$) and $C_\delta>0$ such that for all $n\geq 1$, 
\begin{equation}\label{hari0} 
\mathfrak I_{\theta_j-1,q_j} (k,u_n)
 \le C_j\,
  \|u_n\|_{L^{p}(\Omega)}^{\theta_j-1} 
 \leq C
\prod_{i=1}^N \|\partial_i u_n\|^{\frac{\theta_j-1}{N}}_{L^{p_i}(\Omega)}
\leq  \delta \sum_{i=1}^N  \|\partial_i u_n\|^{p_i}_{L^{p_i}(\Omega)}
+C_\delta.
\end{equation}

When $q_j>0$, we define $\zeta_j$ as follows
\begin{equation} \label{zetag} \zeta_j:=\theta_j -1-\frac{(m-1)\,q_j}{p_j}.
\end{equation}

Case (c) Let $q_j>0$ and $\zeta_j\leq 0$. We set $\gamma_j=1-q_j/p_j$ and $C_j=\left(\mbox{meas}\,(\Omega)\right)^{\gamma_j} $. 
Then, by H\"older's inequality and Lemma~\ref{young}, there exists $C_\delta>0$ such that
\begin{equation} \label{yy}   \begin{aligned} 
\mathfrak I_{\theta_j-1,q_j} (k,u_n)
& \leq C_j\,
 \|\partial_j u_n\|_{L^{p_j}(\Omega)}^{-\frac{p_j\zeta_j}{m-1}}
\left(\mathfrak I_{m-1,p_j}(k,u_n)
\right)^{\frac{\theta_j-1}{m-1}}\\
 &\leq \delta \mathfrak I_{m-1,p_j}(k,u_n)
+  \delta\|\partial_j u_n\|_{L^{p_j}(\Omega)}^{p_j}+C_\delta.
\end{aligned} 
\end{equation}

Case (d) Let $q_j>0$ and $\zeta_j>0$. We distinguish three sub-cases:
\begin{itemize} 
\item[(d$_1$)] Let $\mathfrak{m}_j>1$ and $\theta_j\geq p$. Then, $j\in P_{\overrightarrow{\mathfrak{a}}}$ and from \eqref{m}, we have 
$m>\theta_j=\min\{\mathfrak m_j,\theta_j\}$. 
We set $\gamma_j:=(m-\theta_j)/(m-1)$ and $C_j:=\left(\mbox{meas}\,(\Omega)\right)^{\gamma_j}$. It follows that
\begin{equation} \label{figo}
\begin{aligned} \mathfrak I_{\theta_j-1,q_j} (k,u_n)
& \leq C_j
\left(
\mathfrak I_{m-1}(k,u_n)\right)^{\frac{\zeta_j}{m-1}} 
 \left(\mathfrak I_{m-1,p_j}(k,u_n) \right)^{\frac{q_j}{p_j}}\\
&\leq \delta\,\mathfrak I_{m-1}(k,u_n) +\delta \, \mathfrak I_{m-1,p_j}(k,u_n)+C_\delta,
\end{aligned}
 \end{equation}
where $C_\delta>0$ is a suitable constant depending on $\delta$.

\item[(d$_2$)] Let $\mathfrak{m}_j>1$ and $\theta_j<p$. Then, from \eqref{m}, we see that $m>\mathfrak{m}_j=\min\{\mathfrak{m}_j,\theta_j\}$. 

\item[(d$_3$)] Let $\mathfrak{m}_j\leq 1$. Here, we have $m>1\geq \mathfrak{m}_j$. 
\end{itemize}

We next treat sub-cases (d$_2$) and (d$_3$) together to get \eqref{ttt}  below. 
Using that 
$m>\mathfrak{m}_j$, we define 
\begin{equation} \label{gem} 
\gamma_j:=1-\frac{\zeta_j}{p} -\frac{q_j}{p_j}=\frac{1}{p} \left[ \frac{q_j}{p_j}(m-\mathfrak{m_j}) +1-\frac{q_j}{p_j}\right]\in (0,1).
\end{equation}
We let $C_j=\left(\mbox{meas}\,(\Omega)\right)^{\gamma_j}$. 
By H\"older's inequality, the anisotropic Sobolev inequality \eqref{ASI} in the Appendix and Lemma~\ref{young}, we find constants $C>0$ 
(depending on $N$, $\overrightarrow{p}$, $\theta_j$, $q_j$, $m$ and 
$\mbox{meas}\,(\Omega)$) and $C_\delta>0$ such that for all $n\geq 1$, we have
\begin{equation} \label{ttt} \begin{aligned} 
\mathfrak I_{\theta_j-1,q_j} (k,u_n)&\leq C_j \|u_n\|_{L^{p}(\Omega)}^{\zeta_j}
\left(
\mathfrak I_{m-1,p_j}(k,u_n)
\right)^{\frac{q_j}{p_j}}\\
&\leq C \left(
\mathfrak I_{m-1,p_j}(k,u_n)
\right)^{\frac{q_j}{p_j}} \prod_{i=1}^N \|\partial_i u_n\|_{L^{p_i}(\Omega)}^{\frac{\zeta_j}{N}}\\
& \leq \delta \,\mathfrak I_{m-1,p_j}(k,u_n)+ \delta \sum_{i=1}^N  \|\partial_i u_n\|_{L^{p_i}(\Omega)}^{p_i} +C_\delta.
\end{aligned}\end{equation} 

Since $\delta>0$ is arbitrary, the conclusion of Lemma~\ref{sima} follows
from \eqref{gas} based on the inequalities in \eqref{zz}--\eqref{hari0}, \eqref{yy}, \eqref{figo} and \eqref{ttt}. 
\end{proof}

We can now proceed with the proof of the {\it a priori} estimates of $u_n$.

\begin{prop} \label{lem-ad} The following hold.
\begin{itemize}
\item[(a)] There exists a positive constant $C$ such that for all $n\geq 1$, we have
\begin{equation}\label{cas}
\|u_n\|_{W_0^{1,\overrightarrow{p}}(\Omega)}+ \int_\Omega |\Phi(u_n)|\,dx \leq C.
\end{equation}

\item[(b)]There exists $u_0\in W_0^{1,\overrightarrow{p}}(\Omega)$ such that, up to a subsequence of $\{u_n\}_{n\geq 1}$,  
\begin{equation} \label{wwa} 
 u_n\rightharpoonup u_0\ \mbox{(weakly) in } W_0^{1,\overrightarrow{p}}(\Omega),\quad 
u_n \to u_0\ \mbox{a.e. in } \Omega\ \mbox{as }n\to \infty.
\end{equation} 
\end{itemize}
\end{prop}

\begin{proof} 
$(a)$ 
We fix $k\geq 1$ large such that $k^{m-1} (k-1) \min_{1\leq j\leq N} \mathfrak{a}_j\geq \nu_0$. 
We define 
\begin{equation} \label{knk} 
 K_{n,k}:=\sum_{j=1}^N\int_{\{|u_n|<k\}}
  A_j(u_n)\,\partial_j u_n \,dx-\langle \mathfrak{B} u_n,T_k (u_n)\rangle. 
\end{equation} 
We have $\partial_j T_k(u_n)=\chi_{\{|u_n|< k\}} \,\partial_j u_n$ a.e. in $ \Omega$ for $1\leq j\leq N$. 
By the sign-condition of $\Phi$ in \eqref{cond1}, we see that $\Phi(u_n) \,T_k (u_n)=\Phi(u_n) \,u_n\geq 0$ on $\{|u_n| < k\}$. 
Since $\|f_n\|_{L^1(\Omega)}\leq \|f\|_{L^1(\Omega)}$, 
by taking $v=T_k(u_n)\in  
W_0^{1,\overrightarrow{p}}(\Omega)\cap L^\infty(\Omega)$ in \eqref{baaa}, we find that 
\begin{equation} \label{vio}  
K_{n,k} +k\int_{\{|u_n|\geq k\}} |\Phi(u_n)|\,dx\leq 
C_0 + I_{u_n}(T_k(u_n)),
\end{equation}
where $C_0:=k \left(\|f\|_{L^1(\Omega)}+C_\Theta \,{\rm meas}\,(\Omega)\right)$. 
Lemma~\ref{sima} gives that
for every $\rho>0$, there exists a constant $C_\rho>0$ such that \eqref{fik} holds for all $n\geq 1$.
Using \eqref{fik} into \eqref{vio}, we find that
\begin{equation} \label{ihu}
K_{n,k} +
\left(k-\rho\right) \int_{\{|u_n|\geq k\}} |\Phi(u_n)| \,dx
\leq   C_0+ \rho \sum_{j=1}^N  \|\partial_j u_n\|^{p_j}_{L^{p_j}(\Omega)}+ C_\rho.
\end{equation}
We fix $0<\rho<\min\,\{1, \varepsilon \nu_0\}$. Hence, using \eqref{knk}, \eqref{etc}, our choice of $k$ and 
the coercivity condition in \eqref{ellip}, we derive that
\begin{equation*} 
 \nu_0  \sum_{j=1}^N  \|\partial_j u_n\|^{p_j}_{L^{p_j}(\Omega)}-\langle \mathfrak{B} u_n,T_k(u_n)\rangle\leq  
C_0+ \rho \sum_{j=1}^N  \|\partial_j u_n\|^{p_j}_{L^{p_j}(\Omega)}+ C_\rho.
\end{equation*}
By the choice of $\rho$ and \eqref{exb}, we conclude 
the boundedness of $\{u_n\}_{n\geq 1}$ in $W_0^{1,\overrightarrow{p}}(\Omega)$. Since $\mathfrak B$ is a bounded operator from $W_0^{1,\overrightarrow{p}}(\Omega)$ into its dual, we have 
$|\langle \mathfrak{B}u_n,T_k (u_n)\rangle|\leq C$, where $C$ is a positive constant independent of $n$. Thus, from \eqref{ihu}, we readily deduce that
\begin{equation*} \label{nuc1} 
 \int_{\{|u_n|\geq k\}} |\Phi(u_n)|\,dx \leq C.
\end{equation*} 
Using the growth condition of $\Phi$ in \eqref{cond1}, we find that
$ \int_{\{|u_n|<k\}} |\Phi(u_n)|\,dx \leq C$ for all $ n\geq 1$.  
This completes the proof of \eqref{cas}. 

\medskip 
$(b)$ Up to a subsequence, the assertion  in \eqref{wwa} follows from \eqref{cas}. 
\end{proof}

\subsection{Strong convergence of $T_k(u_n)$} Our aim in this section is to prove the following Proposition~\ref{st-con}. 

\begin{prop} \label{st-con} Up to a subsequence of $\{u_n\}_n$, we have
\begin{equation} \label{hcx}  \nabla u_n\to \nabla u_0\ \mbox{a.e. in } \Omega\ \mbox{and } 
T_k (u_n)\to T_k (u_0)\ \mbox{(strongly) in } W_0^{1,\overrightarrow{p}}(\Omega)\ \mbox{as } n\to \infty 
\end{equation}
 for every positive integer $k$. 
\end{prop}

\begin{rem} \label{om1} We have $\Phi(u_0)\in L^1(\Omega)$. Indeed, using the a.e. convergences of $\{u_n\}$ and $\{\nabla u_n\}$ in \eqref{wwa} and \eqref{hcx}, respectively, we obtain that 
$|\Phi(u_n)|\to |\Phi(u_0)|$ a.e. in $\Omega$ as $n\to \infty$. Then, the claim follows from \eqref{cas} and Fatou's Lemma.
\end{rem}

To derive \eqref{hcx}, we can proceed as in the proof of Lemma 4.2 in \cite{baci}. 
However, the new ingredient here is Lemma~\ref{lema74}, which 
is due to the introduction of $\Psi$ in \eqref{eq1}. 

We define $Q_j(n,k)$, $R_{j}(n,k)$, $V_j(n,k)$ and $W_j(n,k)$ as follows
\begin{equation} \label{crav}
 \begin{aligned} 
& Q_j(n,k):=\int_{\{u_n\geq  k\}}  |u_n|^{\theta_j-1} |\partial_j u_n|^{q_j} (k-T_k(u_0))\,dx && \mbox{for } 1\leq j\leq N,&\\
& R_j(n,k):= \int_{\{u_n\leq  -k\}} |u_n|^{\theta_j-1} |\partial_j u_n|^{q_j} (k+T_k(u_0))\,dx && \mbox{for } 1\leq j\leq N,&\\
& V_j(n,k):=\int_{\{ 0<|u_n|< k\}} 
|\partial_j u_n|^{q_j} |u_n-T_k(u_0)|\,dx && \mbox{if } j\in J_1,&\\
& W_j(n,k):=\int_{\{T_k(u_0) <u_n < k\}} |u_n|^{\theta_j-1} |\partial_j u_n|^{q_j} (u_n-T_k(u_0))\,dx&& \mbox{if } j\in J_2.&
\end{aligned} \end{equation}

Let $\varphi_\lambda$ be as in the proof of Lemma~\ref{res2} (see \eqref{lam}). For every $n,k\geq 1$, we set
\begin{equation} \label{nubb} Z_{n,k}:=T_k(u_n)-T_k(u_0).\end{equation}

\begin{lemma} \label{lema74} We have 
\begin{equation} \label{hub} \limsup_{n\to \infty} I_{u_n} (\varphi_\lambda(Z_{n,k}))\leq 0.
\end{equation}
\end{lemma}

\begin{proof} Since $\varphi_\lambda(Z_{n,k})=Z_{n,k} \,e^{\lambda (Z_{n,k})^2}$, from \eqref{ass} we find that
$$   \begin{aligned}
I_{u_n} (\varphi_\lambda(Z_{n,k}))=& \sum_{j=1}^N \int_{\{u_n\geq k\}} |u_n|^{\theta_j-1} |\partial_j u_n|^{q_j} (k-T_k(u_0))\, e^{\lambda (Z_{n,k})^2}\,dx\\
&+  \sum_{j=1}^N\int_{\{u_n\leq  -k\}}  |u_n|^{\theta_j-1} |\partial_j u_n|^{q_j} (k+T_k(u_0))\, e^{\lambda (Z_{n,k})^2}\,dx\\
&+ \sum_{j=1}^N\int_{\{0<|u_n|< k\}}  |u_n|^{\theta_j-2} u_n  |\partial_j u_n|^{q_j} (u_n-T_k(u_0))\, e^{\lambda (Z_{n,k})^2}\,dx.
\end{aligned} $$

Since $|Z_{n,k}|\leq 2k$ a.e. in $\Omega$, using \eqref{crav}, we infer that
\begin{equation} \label{cbb} \frac{I_{u_n} (\varphi_\lambda(Z_{n,k}))}{e^{4\lambda k^2}}\leq   \sum_{j=1}^N (Q_j(n,k)+R_j(n,k))+\sum_{j\in J_1} k^{\theta_j-1} V_j(n,k)+
\sum_{j\in J_2} W_j(n,k).
\end{equation} We separate the case $j\in J_2$ from $j\in J_1$.

\vspace{0.2cm}
(I) Let $j\in J_2$, which pertains to Case 2 when $u_n\geq 0$ a.e. in $\Omega$ and, hence, $u_0\geq 0$ a.e. in $\Omega$. 
We remark that $(k-T_k(u_0)) \chi_{\{u_n\geq  k\}} \to 0$ in $L^{(p_j/q_j)'}(\Omega)$ as $n\to \infty$. Since $\{ \partial_j u_n\}_{n\geq 1}$ is bounded in 
$L^{p_j}(\Omega)$, by H\"older's inequality, we infer that 
$$ 0\leq Q_j(n,k)\leq k^{\theta_j-1}  \|\partial_j u_n\|_{L^{p_j}(\Omega)}^{q_j} \| (k-T_k(u_0)) \chi_{\{u_n\geq k\}}\|_{L^{(p_j/q_j)'}(\Omega)}\to 0\quad \mbox{as } n\to \infty.
$$
With a similar argument, we obtain that $\lim_{n\to \infty}R_j(n,k)=0$. 

Let $0<\tau<\min_{i\in J_2} \theta_i$. Hence, $\tau\in (0,1)$ and $|u_n|^{\theta_j-1} (u_n-T_k(u_0))\leq k^{\theta_j-\tau}(u_n-T_k(u_0))^\tau$ on the set
$\{ T_k(u_0)\leq u_n\leq k\}$. Since $(u_n-T_k(u_0))^\tau \chi_{\{ T_k(u_0)\leq u_n\leq k\}}\to 0$ in $L^{(p_j/q_j)'}(\Omega)$ as $n\to \infty$, proceeding as above, we obtain that
$$ 0\leq W_j(n,k)\leq k^{\theta_j-\tau} \|\partial_j u_n\|_{L^{p_j}(\Omega)}^{q_j} \| (u_n-T_k(u_0))^\tau \chi_{\{ T_k(u_0)\leq u_n\leq k\}}\|_{L^{(p_j/q_j)'}(\Omega)}\to 0\ \mbox{as } n\to \infty.
$$

(II) We now assume that $j\in J_1$ and  we show that
\begin{equation} \label{qes}
 Q_j(n,k)\to 0\quad \mbox{as } n\to \infty.
 \end{equation} 
As in the proof of Lemma~\ref{sima}, we distinguish several situations:

\vspace{0.2cm}
Case (a). Let $q_j=0$ and $\theta_j\geq p+1$. In this case, $j\in N_{\overrightarrow{\mathfrak{a}}}$ and hence, \eqref{m} yields that 
$m> \theta_j$. 

Case (b).  Let $q_j=0$ and $\theta_j< p+1$. 

In Cases (a) and (b) above, we define $\gamma_j=1-(\theta_j-1)/r$, where $r=m-1$ in Case (a) and $r=p$ in Case (b). 
Then, by H\"older's inequality and \eqref{cas}, we have
$$ 0\leq Q_j(n,k)\leq  \|u_n\|_{L^r(\Omega)}^{\theta_j-1} \| (k-T_k(u_0))\,\chi_{\{u_n\geq k\}} \|_{L^{1/\gamma_j}(\Omega)}\to 0\  \mbox{as } n\to \infty.
$$

For Cases (c) and (d) below, we define $\zeta_j$ as in \eqref{zetag}. 

Case (c) Let $q_j>0$ and $\zeta_j\leq 0$. Defining $\gamma_j=1-q_j/p_j$, similar to \eqref{yy}, we get 
\begin{equation} \label{ron1} 
Q_j(n,k)\leq 
 \|\partial_j u_n\|_{L^{p_j}(\Omega)}^{-\frac{p_j\zeta_j}{m-1}}
\left( \mathfrak I_{m-1,p_j}(u_n)
\right)^{\frac{\theta_j-1}{m-1}} 
\|(k-T_k(u_0))\,\chi_{\{u_n\geq k\}} \|_{L^{1/\gamma_j}(\Omega)}.
\end{equation} 

\vspace{0.2cm}
Case (d) Let $q_j>0$ and $\zeta_j>0$. We have three sub-cases, see (d$_1$)--(d$_3$) in Lemma~\ref{sima}. 

\begin{itemize}
\item[(d$_1$)] Let $\mathfrak{m}_j>1$ and $\theta_j\geq p$. Defining $\gamma_j=(m-\theta_j)/(m-1)$, 
similar to \eqref{figo}, we see that  
\begin{equation} \label{ron2}
Q_j(n,k)\leq \left(
\mathfrak I_{m-1}(u_n)\right)^{\frac{\zeta_j}{m-1}} 
 \left(\mathfrak I_{m-1,p_j}(u_n)
\right)^{\frac{q_j}{p_j}} \|(k-T_k(u_0))\,\chi_{\{u_n\geq k\} } \|_{L^{1/\gamma_j}(\Omega)}.
\end{equation} 
\end{itemize}

We treat the remaining sub-cases (d$_2$) and (d$_3$) together and define $\gamma_j$ as in \eqref{gem}. 
Analogous to \eqref{ttt}, we find that 
\begin{equation} \label{ron3}
Q_j(n,k)\leq  \|u_n\|_{L^{p}(\Omega)}^{\zeta_j}
\left(\mathfrak I_{m-1,p_j}(u_n)
\right)^{\frac{q_j}{p_j}} \| (k-T_k(u_0))\,\chi_{\{u_n\geq k\} }\|_{L^{1/\gamma_j}(\Omega)}. 
\end{equation}

By \eqref{cas}, the right-hand side of each of the inequalities in \eqref{ron1}, \eqref{ron2} and \eqref{ron3} converges to $0$ as $n\to \infty$. 
So, in any of the Cases (a)--(d), we get \eqref{qes} for $j\in J_1$. With the same reasoning, we obtain that $\lim_{n\to \infty} R_j(n,k)= 0$ for every $j\in J_1$. Using that $(u_n-T_k(u_0)) \chi_{\{ 0<|u_n|< k\}}\to 0$ in $L^{(p_j/q_j)'}(\Omega)$ as $n\to \infty$, we find that $V_j(n,k)\to 0$ as $n\to \infty$.
Thus, the right-hand side of \eqref{cbb} converges to $0$ as $n\to \infty$. The proof of \eqref{hub} is complete. 
\end{proof}

\noindent {\bf Proof of Proposition~\ref{st-con}.} 
Using Lemma~A.5 in \cite{baci}, to obtain \eqref{hcx}, it suffices to show that for every integer $k\geq 1$, there exists a subsequence of $\{u_n\}$ (depending on $k$ and relabeled 
$\{u_n\}$) such that \eqref{alt} holds. 
 We first note that 
\begin{equation} \label{mimi}
 \mathcal E_{u_n}(T_k(u_n),T_k(u_0))\, \chi_{\{|u_n|\geq  k\}}\to 0\ \mbox{ (strongly) in } L^1(\Omega)\quad \mbox{ as } n\to \infty.\end{equation} 
 Indeed, from \eqref{7c} and \eqref{nubb}, we have 
 $$ \mathcal E_{u_n}(T_k(u_n),T_k(u_0))=\sum_{j=1}^N E_{j,u_n}(T_k(u_n),T_k(u_0))\,\partial_j Z_{n,k}. 
 $$ 
For all $1\leq j\leq N$, since $\partial_j T_k(u_n)=\chi_{\{|u_n|<k\}}\partial_j u_n $, the Dominated Convergence Theorem yields
\begin{equation} \label{dom} \partial_j Z_{n,k}\,\chi_{\{|u_n|\geq k\}}=-\partial_j u_0\, \chi_{\{|u_n|\geq k\}} \chi_{\{|u_0|<k\}}\to 0\quad \mbox{(strongly) in } L^{p_j}(\Omega)\ \mbox{as } n\to \infty.
\end{equation}
Similar to \eqref{wa}, by passing to a subsequence of $\{u_n\}$, for each $1\leq j\leq N$, we see that $\{ E_{j,u_n}(T_k(u_n),T_k(u_0))\}_{n}$ converges weakly in $L^{p_j'}(\Omega)$ as $n\to \infty$. 
Hence, we obtain \eqref{mimi}. 

Using the monotonicity assumption in \eqref{ellip}, we get that $  \mathcal E_{u_n}(T_k(u_n),T_k(u_0))\geq 0$ a.e. in $\Omega$. Hence, in view of \eqref{mimi}, to conclude \eqref{alt}, it remains to show that, up to a subsequence, 
\begin{equation} \label{erc2} \limsup_{n\to \infty} 
 \int_{\{|u_n|<k\}} \mathcal E_{u_n}(T_k(u_n),T_k(u_0))\,dx\leq 0.
\end{equation}
We set
\begin{equation} \label{obb} \begin{aligned}
& f_\lambda(n,k):=\varphi_\lambda'(Z_{n,k})-\frac{\phi(k)}{\nu_0} |\varphi_\lambda (Z_{n,k})|,  \\
& \mathcal F_{n,k}(v):= \sum_{j=1}^N \int_{\{|u_n|<k\}}  A_j(x,u_n,\nabla v) \,f_{\lambda}(n,k)\,\partial_j Z_{n,k} \, dx,\ \mbox{where } v\in W_0^{1,\overrightarrow{p}}(\Omega).
\end{aligned} \end{equation}
Since $T_k(u_n)=u_n$ on $\{|u_n|<k\}$, from \eqref{bea} and the definition of $\mathcal E_u$ in \eqref{muz}, we infer that 
$$ \frac{1}{2}  \int_{\{|u_n|<k\}} \mathcal E_{u_n}(T_k(u_n),T_k(u_0))\,dx\leq \mathcal F_{n,k}(u_n)-\mathcal F_{n,k}(T_k(u_0)).
$$

The proof of \eqref{erc2} follows now by establishing that 
\begin{equation} \label{imz}
{\rm (i)}\ \ \lim_{n\to \infty} \mathcal F_{n,k}(T_k(u_0))=0,\quad
{\rm (ii)}\ \  \limsup_{n\to \infty} \mathcal F_{n,k}(u_n)
 \leq 0.
\end{equation} 
Since $|Z_{n,k}|\leq 2k$, we find a constant $C_k>0$ such that 
$ |f_\lambda(n,k)| \leq C_k$ for all $n\geq 1$. For arbitrary $1\leq j\leq N$,  
with the same reasoning as for \eqref{unt}, we have that $A_j(x,u_n,\nabla T_k(u_0)) \,\chi_{\{|u_n|\leq  k\}}$ converges to 
$A_j(x,u_0,\nabla T_k(u_0)) \,\chi_{\{|u_0|\leq  k\}}$ (strongly) in $L^{p_j'}(\Omega)$ as $n\to \infty$. Hence, using that $\partial_j Z_{n,k}\rightharpoonup 0$ (weakly) in $L^{p_j}(\Omega)$ as $n\to \infty$, we 
find that
$ A_j(x,u_n,\nabla T_k(u_0)) \,\chi_{\{|u_n|\leq  k\}} \,\partial_j Z_{n,k}\to 0$ in $ L^1(\Omega)$ as $ n\to \infty$. 
Thus, by the squeeze law, we obtain the first limit in \eqref{imz}.

To prove (ii) in \eqref{imz}, we take as a test function in \eqref{baaa} the function 
\begin{equation} \label{vpl} v=\varphi_\lambda(Z_{n,k})\in W_0^{1,\overrightarrow{p}}(\Omega)\cap L^\infty(\Omega).\end{equation}
Compared with \cite{baci}, we have the extra term $I_{u_n}(v)$ in the right-hand side of \eqref{baaa}. Then, $I_{u_n} (\varphi_\lambda(Z_{n,k}))$ is the additional term which appears when bounding from above $\mathcal F_{n,k}(u_n)$. By following the ideas in the proof of Lemmata 3.2 and 4.2 in \cite{baci} (see Lemma~\ref{occ} in the Appendix for details), we arrive at
\begin{equation} \label{nica}
\mathcal F_{n,k}(u_n)\leq S_k(n)+
I_{u_n}(\varphi_\lambda (Z_{n,k})),
\end{equation}
where, up to a subsequence of $\{u_n\}$, $\lim_{n\to \infty} S_k(n)= 0$. 
From Lemma~\ref{lema74} and \eqref{nica}, we conclude (ii) in \eqref{imz}. This 
ends the proof of Proposition~\ref{st-con}.
\qed

\subsection{Proof of Theorem~\ref{teo2} concluded} 
Here, we obtain that $u_0\in W_0^{1,\overrightarrow{p}}(\Omega) $ is a solution of \eqref{eq1} 
by combining Propositions~\ref{lem-ad} and \ref{st-con} with Lemma~\ref{lemi} below.

\begin{lemma} \label{lemi}
Let $u_n$ and $u_0$ be as in Proposition~\ref{lem-ad}. Then, 
up to a subsequence, we have
\begin{equation}  
I_{u_0}(v)=\displaystyle \lim_{n\to \infty} I_{u_n} (v)=
S_{u_0,\Theta,f}(v)
-\langle\mathfrak{B} u_0, v\rangle  \label{fox2}
\end{equation}  	
for every $v\in W_0^{1,\overrightarrow{p}}(\Omega)\cap L^\infty(\Omega)$. 
\end{lemma}

\begin{proof} We start by proving the second equality in \eqref{fox2}. Let $v\in W_0^{1,\overrightarrow{p}}(\Omega)\cap L^\infty(\Omega)$ be arbitrary. From \eqref{nice}, we have 
$\lim_{n\to \infty} \int_\Omega f_n v\,dx= \int_\Omega fv\,dx$. 
Reasoning as in the proof of \eqref{lli}, we obtain 
$ \Theta(u_n)\, v\to \Theta(u_0)\, v$ in $L^1(\Omega)$ as $n\to \infty$, $\lim_{n\to \infty} \langle \mathcal A (u_n) ,v\rangle=\langle \mathcal A (u_0) ,v\rangle$ and 
$\lim_{n\to \infty}  \langle \mathfrak{B} u_n, v\rangle = \langle \mathfrak{B} u_0, v\rangle$. 
Since $\Phi(u_0)\in L^1(\Omega)$ (see Remark~\ref{om1}),  
it is enough to show that 
\begin{equation} \label{yem} \Phi(u_n)\to \Phi(u_0) \ \ \mbox{(strongly) in } L^1(\Omega)\ \mbox{as } n\to \infty. 
\end{equation} By Vitali's Theorem, it suffices to show the uniform integrability of 
$\{\Phi(u_n)\}_{n\geq 1}$ over $\Omega$. 

Fix $M>2$ arbitrary. Let $\omega$ be 
any measurable subset  of $\Omega$.  
We regain \eqref{net} with $u_n$ instead of $U_n$. 
However, the proof of \eqref{new1} does not translate here since from Proposition~\ref{lem-ad} we only have 
the uniform boundedness in $L^1(\Omega)$ for
$\{\Phi(u_n)\}_{n\geq 1}$ (rather than
for $\{\Phi(u_n)\,u_n\}_{n\geq 1}$). The case $\Psi=0$ is treated in \cite{baci}*{Lemma 4.3} by adapting and extending to the anisotropic case an approach from \cite{BGM}. 
We give the details since compared with \cite{baci} we need to deal with the new term $\Psi$ in \eqref{ass}. In \eqref{baaa}, we take
$$v=T_1(G_{M-1}(u_n)) \in W_0^{1,\overrightarrow{p}}(\Omega)\cap L^\infty(\Omega).
$$
 Then, using the coercivity condition in \eqref{ellip} and \eqref{cond1}, we obtain the estimate 
\begin{equation} \label{vio2}  
\begin{aligned}
\int_{\{|u_n|> M\}} |\Phi(u_n)|\,dx\leq & \int_{\{|u_n|\geq M-1\}} (|f_n|+C_\Theta)\,dx+
|\langle \mathfrak{B} u_n, T_1(G_{M-1} (u_n))\rangle |\\
&+ I_{u_n}(T_1(G_{M-1}(u_n)) ).
\end{aligned}\end{equation}
Now, up to a subsequence of $\{u_n\}$, from \eqref{wwa}, we have
$$ T_1(G_{M-1} (u_n))\rightharpoonup  
T_1(G_{M-1} (u_0))\ \mbox{(weakly) in } W_0^{1,\overrightarrow{p}}(\Omega)\ \mbox{as }n\to \infty.
$$ Using this in \eqref{vio2}, jointly with \eqref{nice} and the property $(P_2)$ for $\mathfrak{B}$, we find that 
\begin{equation} \label{foz} \begin{aligned}  \limsup_{n\to \infty} \int_{\{|u_n|>M\}} |\Phi(u_n)| \,dx\leq &
\int_{\{|u_0|\geq M-1\}} (|f|+C_\Theta) \,dx+|\langle \mathfrak B u_0,T_1(G_{M-1} (u_0))\rangle|\\
&+\limsup_{n\to \infty}  |I_{u_n}(T_1(G_{M-1}(u_n)) )|.
\end{aligned}
\end{equation}

Since $T_1(G_{M-1}(u_n))=0$ on $\{|u_n|\leq M-1\}$, 
we have
\begin{equation} \label{fan} |I_{u_n}(T_1(G_{M-1}(u_n)) )| \leq \sum_{j=1}^N \int_{\{|u_n|\geq M-1\}} |u_n|^{\theta_j-1} |\partial_j u_n|^{q_j}\,dx.
\end{equation}
Let $\mu_{M-1}(v):=\mbox{meas}\,\{|v|\geq M-1\}$. 
We next bound from above the right-hand side of \eqref{fan}. 

\vspace{0.2cm}
(I) For every $j\in J_2$, using that $M>2$ and $\theta_j\leq 1$, we find that
\begin{equation} \label{ron0}  \int_{\{|u_n|\geq M-1\}} |u_n|^{\theta_j-1} |\partial_j u_n|^{q_j}\,dx\leq (M-1)^{\theta_j-1}
\|\partial_j u_n\|^{q_j}_{L^{p_j}(\Omega)}\left(\mu_{M-1}(u_n)\right)^{1-q_j/p_j}.
\end{equation}

(II) Let $j\in J_1$ corresponding to $\theta_j>1$. 
We are guided by the reasoning in Lemma~\ref{lema74}. 
In relation to the upper bound for 
$Q_j(n,k)$ in the proof of \eqref{qes}, we replace $(k-T_k(u_0)) \chi_{\{u_n\geq k\}}$ by $\chi_{\{|u_n|\geq M-1\}}$. Hence, using also \eqref{cas}, we obtain a positive constant $C$, independent of $n$ and $M$, such that
\begin{equation} \label{roni} 
 \int_{\{|u_n|\geq M-1\}} |u_n|^{\theta_j-1} |\partial_j u_n|^{q_j}\,dx\leq C\,  \left( \mu_{M-1}(u_n)\right)^{\gamma_j},
\end{equation} 
where $\gamma_j\in (0,1)$ is defined according to (a)--(d) in the proof of \eqref{qes}.

\vspace{0.2cm}
\noindent In light of \eqref{ron0} and \eqref{roni}, we infer from \eqref{fan} that
$$ \limsup_{n\to \infty}  |I_{u_n}(T_1(G_{M-1}(u_n)) )| \leq C \left(\sum_{j\in J_2} (\mu_{M-1} (u_0))^{1-q_j/p_j}+ \sum_{j\in J_1}  \left( \mu_{M-1}(u_0)\right)^{\gamma_j} \right),
$$ where $C>0$ is a constant independent of $M$. As $\mu_{M-1}(u_0)\to 0$ as $M\to \infty$, by choosing $M>2$ large, we can make 
$ \limsup_{n\to \infty}  |I_{u_n}(T_1(G_{M-1}(u_n)) )|$ as small as desired. Using this fact in \eqref{foz}, we conclude that 
$ \int_\omega |\Phi(u_n)| \chi_{\{ |u_n|>M\}}\,dx
$ is small uniformly in $n$ and $\omega$. This finishes the proof of \eqref{yem}. 

\vspace{0.2cm}
We now establish the first equality in \eqref{fox2} for every  $v \in W_0^{1,\overrightarrow{p}}(\Omega)\cap L^\infty(\Omega)$. 
We follow the ideas in the proof of \eqref{lima}, working here with $\Psi$, $u_n$ and $u_0$ instead of $\Psi_n$, $U_n$ and $U_0$, respectively. 
Hence, for every $j\in J_1$, the reader should replace 
$H_{j,n}(U_n,\partial_j U_n)$ by $|u_n|^{\theta_j-2} u_n |\partial_j u_n|^{q_j}$. 

\noindent For every $j\in J_1$, corresponding to \eqref{lsmvbf}, we want to show that there exists $s_j>1$ such that 
\begin{equation} \label{suv} \| |u_n|^{\theta_j-2} u_n |\partial_j u_n|^{q_j}\|_{L^{s_j}(\Omega)}\leq C
\end{equation} for a positive constant $C$ independent of $n$. 
We need to adjust the argument given in Section~\ref{sec-c1}. The reason is that instead of $\{\|u_n\|_{L^m(\Omega)}\}_{n\geq 1}$ and
$\{\mathfrak I_{m,p_j}(u_n)\}_{n\geq 1}$ being uniformly bounded in $n$, we only have that $\{\int_{\Omega} |u_n|^{m-1}\,dx\}_{n\geq 1}$ and $\{\mathfrak I_{m-1,p_j}(u_n)\}_{n\geq 1}$ are uniformly bounded (see Proposition~\ref{lem-ad} (a)). With similar ideas to those given in the proof of \eqref{qes}, based on H\"older's inequality, we obtain \eqref{suv}  
by taking $s_j=1/(1-\gamma_j)$, where $\gamma_j\in (0,1)$ is defined as for \eqref{roni}. 

\noindent Now, we use Proposition~\ref{lem-ad} (b) and \eqref{hcx} to deduce that 
$$ |u_n|^{\theta_j-2} u_n |\partial_j u_n|^{q_j}\to |u_0|^{\theta_j-2} u_0 |\partial_j u_0|^{q_j}\quad \mbox{a.e. in } \Omega\ \mbox{as } n\to \infty.
$$
Using \eqref{suv}, we infer that, up to a subsequence, $|u_n|^{\theta_j-2} u_n |\partial_j u_n|^{q_j}\rightharpoonup 
|u_0|^{\theta_j-2} u_0 |\partial_j u_0|^{q_j}$ (weakly) in $L^{s_j}(\Omega)$ as $n\to \infty$, proving that 
\begin{equation} \label{noc} \sum_{j\in J_1} \int_\Omega  |u_n|^{\theta_j-2} u_n |\partial_j u_n|^{q_j} v\,dx=
 \sum_{j\in J_1} \int_\Omega  |u_0|^{\theta_j-2} u_0 |\partial_j u_0|^{q_j} v\,dx.
\end{equation}
As mentioned before, for $w\in W_0^{1,\overrightarrow{p}}(\Omega)\cap L^\infty(\Omega)$, we have $\nabla w=0$ a.e. in $\{w=0\}$. Hence, the above identity holds if instead of   
$\Omega$ we put $\{|u_n|>0\}$ in the left-hand side of \eqref{noc} and $\{|u_0|>0\}$ in the right-hand side. 
This completes the proof of \eqref{fox2} in Case 1.

In  Case 2, the proof of \eqref{fox2} adapts almost verbatim from Section~\ref{sec-th2} remembering to work with $\Psi$ instead of $\Psi_n$.
This ends the proof of Lemma~\ref{lemi}. \end{proof}

\appendix
\section{}

In this paper, we need the following version of Young's inequality. 

\begin{lemma}[Young's inequality] \label{young} Let $N\geq 2$ be an integer. Assume that $\beta_1,\ldots, \beta_N$ are positive numbers
	and $1<R_k<\infty$ for each $1\leq k\leq N-1$. If $\sum_{k=1}^{N-1} (1/R_k)<1$, then for every $\delta>0$, there exists
	a positive constant $C_\delta$ (depending on $\delta$) such that 
	$$ \prod_{k=1}^N \beta_k\leq \delta\sum_{k=1}^{N-1} \beta_k^{R_k} +C_\delta \,\beta_N^{R_N},  
	$$ where we define $R_N=\left[1-\sum_{k=1}^{N-1} (1/R_k)\right]^{-1}$. 
\end{lemma}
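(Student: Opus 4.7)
The plan is to reduce the statement to the classical $N$-variable Young inequality. By the definition of $R_N$, the exponents $R_1,\ldots,R_N$ satisfy $\sum_{k=1}^N 1/R_k=1$, so they form a Hölder conjugate system.

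First, I would recall (or derive from the two-variable case by a quick induction) the symmetric form: for any positive $x_1,\ldots,x_N$,
$$\prod_{k=1}^N x_k \;\le\; \sum_{k=1}^N \frac{x_k^{R_k}}{R_k}.$$
This follows in one line from the concavity of $\log$ applied with weights $1/R_k$, or equivalently from the AM--GM inequality applied to $x_k^{R_k}$ with weights $1/R_k$.

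Next, to introduce the small parameter $\delta$ in front of the first $N-1$ terms, I would rescale. Pick positive constants $\lambda_1,\ldots,\lambda_{N-1}$ and apply the symmetric inequality to $x_k=\lambda_k\beta_k$ for $k\le N-1$ and $x_N=\beta_N\bigl(\prod_{k=1}^{N-1}\lambda_k\bigr)^{-1}$. Since $\prod_k x_k=\prod_k\beta_k$, this gives
$$\prod_{k=1}^N\beta_k \;\le\; \sum_{k=1}^{N-1}\frac{\lambda_k^{R_k}}{R_k}\,\beta_k^{R_k} \;+\; \frac{1}{R_N}\,\Bigl(\prod_{k=1}^{N-1}\lambda_k\Bigr)^{-R_N}\beta_N^{R_N}.$$
Then I would choose $\lambda_k=(\delta R_k)^{1/R_k}$, which makes each coefficient $\lambda_k^{R_k}/R_k$ equal to $\delta$, and declare
$$C_\delta \;:=\; \frac{1}{R_N}\prod_{k=1}^{N-1}(\delta R_k)^{-R_N/R_k}.$$

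No serious obstacle arises; the whole argument is the standard scaling trick that converts the symmetric Young inequality into a version with a small absorbable constant. The only book-keeping worth double-checking in the write-up is the dependence of $C_\delta$ on $\delta$: since $\sum_{k=1}^{N-1}R_N/R_k=R_N-1$, one obtains $C_\delta=O(\delta^{\,1-R_N})$ as $\delta\to 0^+$, which is the expected blow-up rate for this weighted Young inequality.
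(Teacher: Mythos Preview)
Your argument is correct: the exponents $R_1,\ldots,R_N$ form a H\"older conjugate system, the symmetric $N$-term Young inequality follows from weighted AM--GM, and your rescaling $x_k=\lambda_k\beta_k$ with $\lambda_k=(\delta R_k)^{1/R_k}$ cleanly produces the coefficient $\delta$ on the first $N-1$ terms and the explicit $C_\delta$ on the last. The paper itself states this lemma without proof, so there is nothing to compare against; your write-up would serve as a suitable proof.
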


We recall the anisotropic Sobolev inequality in \cite{T}*{Theorem~1.2}.

\begin{lemma} \label{sob} Let $N\geq 2$ be an integer. If $1 < p_j <\infty$ for every $1\le j \le N$ and $p<N$,  
then there exists a
positive constant $\mathcal{S}=\mathcal{S}(N,\overrightarrow{p})$, such that 
\begin{equation} \label{send} 
\|u\|_{L^{p^\ast}(\R^N)}\leq \mathcal{S}  \prod_{j=1}^N \| \partial_j  u\|_{L^{p_j}(\R^N)}^{1/N} \quad \mbox{for all } u\in C_c^\infty(\R^N),
\end{equation}
where, as usual, $p^\ast:=Np/(N-p)$.
\end{lemma}

\begin{rem} \label{an-sob} Let $\Omega$ be a bounded, open subset of $\R^N$ with $N\geq 2$. If  $1 < p_j <\infty$ for every $1\le j \le N$ and $p<N$, then by a density argument, 
\eqref{send} extends to all $u\in  W_0^{1,\overrightarrow{p}}(\Omega) $ so that the arithmetic-geometric mean inequality yields
\begin{equation}\label{ASI}
\|u\|_{L^{p^\ast}(\Omega)}\leq \mathcal{S}  \prod_{j=1}^N \| \partial_j  u\|_{L^{p_j}(\Omega)}^{1/N} \leq 
\frac{\mathcal{S}}{N} \sum_{j=1}^N \| \partial_j  u\|_{L^{p_j}(\Omega)}=\frac{\mathcal{S}}{N}\|u\|_{W_0^{1,\overrightarrow{p}}(\Omega)}
\end{equation}
for all $u\in W_0^{1,\overrightarrow{p}}(\Omega)$. Moreover, 
using H\"older's inequality, the embedding $W_0^{1,\overrightarrow{p}}(\Omega)\hookrightarrow L^s(\Omega)$ is continuous 
for every
$s\in [1,p^\ast]$ and compact for every $s\in [1,p^\ast)$.
\end{rem}

\begin{lemma} \label{cori} Let the assumptions of Proposition~\ref{red} hold. 
Suppose that \begin{equation} \label{amil1} \mathcal E_{U_n}(U_n^\pm,U_0^\pm)\to 0\ \mbox{ in } L^1(\Omega)\ \mbox{ as }  n\to \infty.
\end{equation}  
Then, up to a subsequence, we have
\begin{eqnarray} 
&\nabla U_n^\pm\to \nabla U_0^\pm\ \mbox{a.e. in }  \Omega\ \mbox{as } n\to \infty, \label{qmz}\\
 & U_n^\pm\to U_0^\pm\ \mbox{(strongly) in } W_0^{1,\overrightarrow{p}}(\Omega)\ \mbox{as } n\to \infty. \label{hsn21}
\end{eqnarray}
\end{lemma}

\begin{proof}
From \eqref{amil1}, we see that, up to a subsequence, 
$\mathcal E_{U_n}(U_n^\pm,U_0^\pm)\to 0$ a.e. in $\Omega$ as  $n\to \infty$.

We prove \eqref{qmz}. Recall from Proposition~\ref{red} that \eqref{uni} holds. Let $Z$ be a subset of $\Omega$ with $\mbox{meas}\,(Z)=0$ such that for every $x\in \Omega\setminus Z$, we have $|U_0^\pm(x)|<\infty$, $|\nabla U_0^\pm(x)|<\infty$,  
$|\eta_j(x)|<\infty$ for all $1\leq j\leq N$, as well as 
\begin{equation} \label{opak1} 
U_n(x)^\pm\to U_0^\pm(x), \quad \mathcal{E}_{U_n}(U_n^\pm,U_0^\pm)(x)\to 0\ \mbox{as }n\to \infty.
\end{equation} 
We fix $x\in \Omega\setminus Z$.  
By the monotonicity and coercivity assumptions in \eqref{ellip}, we find that 
\begin{equation} \label{mica}  
 \mathcal E_{U_n}(U_n^\pm,U_0^\pm)\geq \nu_0 \sum_{j=1}^N |\partial_j  U_n^\pm(x)|^{p_j}-
 \sum_{j=1}^N  |A_{j}(x,U_n,\nabla U_n^\pm)\, \partial_j U_0^\pm
+A_j(x,U_n,\nabla U_0^\pm) \,\partial_j U_n^\pm |.
\end{equation}    
By Young's inequality, for every $\delta>0$, there exists $C_\delta>0$ such that 
\begin{equation} \label{abi1m1} \begin{aligned}
&  |A_{j}(x,U_n,\nabla U_n^\pm)\, \partial_j U_0^\pm| \leq 
 \delta\,  |A_j(x,U_n,\nabla  U_n^\pm)|^{p_j'}+C_\delta  |\partial_j U_0^\pm(x)|^{p_j}\\
& 
 |A_j(x,U_n,\nabla U_0^\pm) \,\partial_j U_n^\pm|
\leq 
\delta\,  |\partial_j U_n^\pm(x)|^{p_j} +C_\delta |A_j(x,U_n,\nabla U_0^\pm)|^{p_j'}
\end{aligned}
\end{equation} for every $1\leq j\leq N$. 
We use the growth condition  in \eqref{ellip} to bound from above the right-hand side of each inequality in \eqref{abi1m1}. Then, there exist positive constants $C$ and 
$C'_\delta$, both independent of $n$ (only $C'_\delta$ depends on $\delta$) such that $ \mathcal E_{U_n}(U_n^\pm,U_0^\pm)(x)$ is bounded below by 
\begin{equation*} \label{abi2k1}   (\nu_0-C\delta) \sum_{j=1}^N |\partial_j U_n^\pm(x)|^{p_j}- C'_\delta \left(\sum_{j=1}^N  \eta_j^{p_j'}(x)+|U_n(x)|^{p^\ast} +
\sum_{j=1}^N |\partial_j U_0^\pm(x)|^{p_j}\right).
\end{equation*} 
Using \eqref{opak1} and choosing $\delta\in (0,\nu_0/C)$, 
we conclude that 
\begin{equation} \label{bddk1} 
\{|\nabla U_n^\pm(x)|\}_{n}\ \text{ is uniformly bounded with respect to } n. 
\end{equation} 
Let $x\in \Omega\setminus Z$ be arbitrary. To prove that $\nabla U_n^+(x)\to \nabla U_0^+(x)$ as $n\to \infty$, we show that any accumulation point $\Xi$ of $\{\nabla U_n^+(x)\}_{n}$ 
coincides with $\nabla U_0^+(x)$. From \eqref{bddk1}, we have $|\Xi|<\infty$. 
By \eqref{opak1} and the continuity of $ A_j(x,\cdot,\cdot)$ with respect to the last two variables, we get 
$$
 {\mathcal E}_{U_n}(U_n^+,U_0^+)(x)\to  \sum_{j=1}^N \left[A_j(x,U_0(x),\Xi)-  A_j(x,U_0(x),\nabla U_0^+(x))\right] (\Xi_j- \partial_j U_0^+(x))\ \text{as } n\to \infty.   
$$ 
This, jointly with \eqref{opak1} and the monotonicity condition in \eqref{ellip}, gives that 
$\Xi=\nabla U_0^+(x)$. Similarly, we obtain that   
$\nabla U_n^-(x)\to \nabla U_0^-(x)$ as $n\to \infty$. The proof of \eqref{qmz} is complete since $x\in \Omega\setminus Z$ is arbitrary and $\mbox{meas}\,(Z)=0$.

\vspace{0.2cm}
In order to prove \eqref{hsn21}, we use \eqref{qmz}, \eqref{mica}, Lemma~\ref{step3} and Vitali's Theorem. We see that
$\{|\partial_j U_n^\pm-  \partial_j U_0^\pm|^{p_j}\}_n$ is a sequence of non-negative integrable functions, converging to $0$ a.e. on $\Omega$ as $n\to \infty$. So, we conclude \eqref{hsn21} by 
showing that, up to a subsequence, 
$ \left\{\sum_{j=1}^N |\partial_j U_n^\pm|^{p_j}\right\}_{n}$ is uniformly integrable over $ \Omega$. Now, up to a subsequence, we have for each $1\leq j\leq N$,	
\begin{equation} \label{wia} A_j(x,U_n,\nabla U_n^\pm)
\rightharpoonup A_j(x,U_0,\nabla U_0^\pm)\ \mbox{ (weakly) in }L^{p_j'}(\Omega)\ \mbox{as } n\to \infty. \end{equation} 
Indeed, $\{A_j(x,U_n,\nabla U_n^\pm)\}_n$ is bounded in $L^{p_j'}(\Omega)$ from the growth condition in \eqref{ellip} and the boundedness of $\{U_n\}_{n}$ in $W_0^{1,\overrightarrow{p}}(\Omega)$ and, hence, in $L^{p^\ast}(\Omega)$. 
Moreover, $\{A_j(x,U_n,\nabla U_n^\pm)\}_{n}$ converges to $ A_j(x,U_0,\nabla U_0^\pm)$ a.e. in $ \Omega$ as $n\to \infty$ using \eqref{qmz}, the convergence $U_n\to U_0$ a.e. in $\Omega$ (from \eqref{uni}) and the continuity of $A_j(x,\cdot,\cdot)$ in the last two variables. Hence, up to a subsequence, we have \eqref{wia}. 
Consequently, for each $1\leq j\leq N$, we get
\begin{equation} \label{coni11} 
A_{j}(x,U_n,\nabla U_n^\pm)\, \partial_j U_0^\pm \to  A_j(x,U_0,\nabla U_0^\pm)\,\partial_j U_0^\pm\quad \mbox{in } L^1(\Omega)\ \mbox{as }n\to \infty.
\end{equation}

Let $k\geq 1$ be arbitrary. For each $1\leq j\leq N$, we next prove that, as $n\to \infty $, 
\begin{equation} \label{coni111} 
A_j(x,U_n,\nabla U_0^\pm)  \chi_{\{|U_n|\leq k\}}\,\partial_j U_n^\pm\to A_j(x,U_0,\nabla U_0^\pm) \,\chi_{\{|U_0|\leq k\}}\,\,\partial_j U_0^\pm \ \mbox{in } L^1(\Omega).   
\end{equation}
Let $1\leq j\leq N$ be arbitrary. Note that $\{ |A_j(x,U_n,\nabla U_0^\pm)|^{p_j'}  \chi_{\{|U_n|\leq k\}} \}_n 
$ is uniformly integrable over $\Omega$ and $A_j(x,U_n,\nabla U_0^\pm)\chi_{\{|U_n|\leq k\} } \to A_j(x,U_0,\nabla U_0^\pm) \chi_{\{|U_0|\leq k\}}$ a.e. in $\Omega$ as $n\to \infty$. Thus, by Vitali's Theorem, 
$A_j(x,U_n,\nabla U_0^\pm)\,\chi_{\{|U_n|\leq k\} } \to 
A_j(x,U_0,\nabla U_0^\pm) \chi_{\{|U_0|\leq k\}}$ in $L^{p_j'}(\Omega)$ as $n\to \infty$. This proves \eqref{coni111} since 
$\partial_j U_n^\pm\rightharpoonup \partial_j U_0^\pm$ (weakly) in $L^{p_j}(\Omega)$ as $n\to \infty$ (see Remark~\ref{rem32}).

By H\"older's inequality, we get a constant $C>0$ (independent of $k$) such that for all $n\geq 1$,
\begin{equation} \label{bab}\begin{aligned}
\sum_{j=1}^N \int_{\{|U_n|>k\}} |A_j(x,U_n,\nabla U_0^\pm) \,\partial_j U_n^\pm| \,dx&=
\sum_{j=1}^N  \int_{\{U_n^\pm>k\}}  |A_j(x,U_n,\nabla U_0^\pm) \,\partial_j G_k(U_n)| \,dx\\
 &\leq C \sum_{j=1}^N \|\partial_j G_k(U_n)\|_{L^{p_j}(\Omega)}.
 \end{aligned}
\end{equation}

Using \eqref{bab}, jointly with Lemma~\ref{step3}, we get that, up to a subsequence, 
\begin{equation} \label{lamp} \limsup_{n\to \infty} 
\sum_{j=1}^N \int_{\{|U_n|>k\}} |A_j(x,U_n,\nabla U_0^\pm) \,\partial_j U_n^\pm| \,dx\leq C W_k,
\end{equation} for each $k\geq 1$, where $\lim_{k\to \infty} W_k=0$. Using \eqref{amil1}, \eqref{coni11}, \eqref{coni111} and \eqref{lamp}, from \eqref{mica}, we get that 
$ \left\{\sum_{j=1}^N |\partial_j U_n^\pm|^{p_j}\right\}_{n}$ is uniformly integrable over $ \Omega$. This ends the proof of \eqref{hsn21}.
\end{proof}

\begin{rem} \label{copp}
We need Lemma~\ref{step3} to control the integral in the left-hand side of \eqref{bab}. Indeed, 
we cannot conclude that 
$ A_j(x,U_n,\nabla U_0^\pm)  \,\partial_j U_n^\pm\to A_j(x,U_0,\nabla U_0^\pm) \,\partial_j U_0^\pm$ in $L^1(\Omega)$ as $n\to \infty$ 
for the same reason as in Remark~\ref{rema63}.
\end{rem}

\begin{lemma} \label{occ} In the framework of Theorem~\ref{teo2}, we have \eqref{nica}. 
\end{lemma}

\begin{proof}
Recall that $Z_{n,k}=T_k(u_n)-T_k(u_0)$. From \eqref{wwa}, we have 
$Z_{n,k}\to 0$ a.e. in $\Omega$ as $n\to \infty$ and $Z_{n,k} 
\rightharpoonup  0$ (weakly) in $ W_0^{1,\overrightarrow{p}}(\Omega)$ as $n\to \infty$.  
Moreover, we find that 
\begin{equation} \label{ver} 
\varphi_\lambda(Z_{n,k})\to 0\ \text{ a.e. in }
\Omega\ \text{and } \varphi_\lambda(Z_{n,k}) \rightharpoonup  0\ \text{ (weakly) in } W_0^{1,\overrightarrow{p}}(\Omega) \ 
\text{as }n\to \infty.
\end{equation}
Observe that $u_n\, Z_{n,k}\geq 0$ on the set $\{|u_n|\geq k\}$, which gives that 
$ \Phi(u_n) \,\varphi_\lambda(Z_{n,k})\,\chi_{\{|u_n|\geq k\}}\geq 0$. Thus, by testing \eqref{baaa} with $v=\varphi_\lambda(Z_{n,k})\in W_0^{1,\overrightarrow{p}}(\Omega)\cap L^\infty(\Omega)$,  
we obtain that 
\begin{equation} \label{nou} 
\langle \mathcal A u_n, \varphi_\lambda(Z_{n,k})\rangle +
\int_{\{|u_n|<k\}} \Phi(u_n) \,\varphi_\lambda(Z_{n,k})\,dx\leq 
\ell_{n,k}+I_{u_n}(\varphi_\lambda(Z_{n,k})),
\end{equation} 
where $\ell_{n,k}$ is defined by 
\begin{equation*} \label{supa1}
\ell_{n,k}:=\langle \mathfrak{B} u_n, \varphi_\lambda(Z_{n,k})\rangle
-\int_\Omega \Theta (u_n)\,\varphi_\lambda(Z_{n,k})\,dx+\int_{\Omega} f_n \,\varphi_\lambda(Z_{n,k})\,dx.
\end{equation*}
The first term in $\ell_{n,k}$ converges to $0$ as $n\to \infty$ from  \eqref{wwa}, \eqref{ver} and the property $(P_2)$ of $\mathfrak B$. 
Since $|\Theta(u_n)|\leq C_\Theta$ and \eqref{nice} holds, by the Dominated Convergence Theorem, we get that the second, as well as the third term in $\ell_{n,k}$, converges to $0$ as $n\to \infty$. Hence, 
$\lim_{n\to \infty} \ell_{n,k}=0$. 

To simplify exposition, we now introduce some notation:
\begin{equation*} \label{nota}
\begin{aligned}
& X_k(n):= \phi(k)  \, \int_{\{|u_n|<k\}} 
\left[ \frac{1}{\nu_0}\, \sum_{j=1}^N A_j(u_n)\,\partial_j T_k (u_0) +c(x)\right] |\varphi_\lambda(Z_{n,k})| \,dx,\\
& Y_k(n):= \sum_{j=1}^N \int_\Omega A_j(u_n)\,\, \varphi_\lambda'(Z_{n,k})\,\chi_{\{|u_0|<k\}}\,
\chi_{\{|u_n|\geq k\} }\, \partial_j u_0\,dx.
\end{aligned}
\end{equation*}
We rewrite the first term in the left-hand side of \eqref{nou} as follows
\begin{equation} \label{cop}   \langle \mathcal A u_n, \varphi_\lambda(Z_{n,k})\rangle=
\sum_{j=1}^N \int_{\{|u_n|<k\}} A_j( u_n)  \,\varphi_\lambda'(Z_{n,k})\,\partial_j Z_{n,k}\,dx-Y_k(n).
\end{equation}
The coercivity condition in \eqref{ellip} and the growth condition of $\Phi$ in \eqref{cond1} imply that 
\begin{equation} \label{loc}  | \Phi(u_n) |
\,\chi_{\{|u_n|<k\}}\leq 
\phi(k) \left[ \frac{1}{\nu_0} \sum_{j=1}^N A_j(u_n) \,\partial_j u_n +c(x)\right] 
\,\chi_{\{|u_n|<k\}}. \end{equation} 
In the right-hand side of \eqref{loc} we replace $\partial_j u_n$ by $\partial_j Z_{n,k}+\partial_j T_k (u_0)$, then we multiply the inequality by
$| \varphi_\lambda(Z_{n,k})|$ and integrate over $\Omega$ with respect to $x$. It follows that the second term in the left-hand side of \eqref{nou} is at least 
$$  -\frac{\phi(k)}{\nu_0} \sum_{j=1}^N\int_{\{|u_n|<k\}} A_j(u_n) \,
| \varphi_\lambda(Z_{n,k})| \,\partial_j Z_{n,k} \,dx-X_{k}(n).
$$ 
Using this fact, as well as \eqref{cop}, in \eqref{nou}, we see that $\mathcal F_{n,k}(u_n)$ (defined in \eqref{obb}) satisfies 
\begin{equation} \label{veb}  \mathcal F_{n,k}(u_n)\leq X_k(n)+Y_k(n)+\ell_{n,k}+I_{u_n}(\varphi_\lambda(Z_{n,k})).
\end{equation} 

Since $\lim_{n\to \infty} \ell_{n,k}=0$, by showing that $\lim_{n\to \infty} X_k(n)=\lim_{n\to \infty} Y_k(n)=0$, we conclude \eqref{nica}.  
Using \eqref{ver} and $c\in L^1(\Omega)$, we infer from the Dominated Convergence Theorem that 
\begin{equation} \label{alb1} c(x) |\varphi_\lambda(Z_{n,k})|\,\chi_{\{|u_n|<k\}}\to 0 \ \mbox{in } L^1(\Omega)\ \mbox{as } n\to \infty.\end{equation} 
Next, up to a subsequence of $\{u_n\}$, $A_j(u_n) $ converges weakly 
in $L^{p_j'}(\Omega)$ 
as $n\to \infty$ for all $1\leq j\leq N$. Hence,  
$ \sum_{j=1}^N A_j(u_n)\,\partial_j  u_0 $
converges in $L^1(\Omega)$ as $n\to \infty$. Then, there exists a non-negative function $F\in L^1(\Omega)$ (independent of $n$) such that,  
up to a subsequence of $\{u_n\}$,  
$ \left|\sum_{j=1}^N A_j(u_n)\,\partial_j  u_0\right|\leq F $ a.e. in $ \Omega$ for every $n\geq 1$. 
By the Dominated Convergence Theorem, 
\begin{equation} \label{alb2}  \sum_{j=1}^N A_j(u_n)\, \partial_j T_k (u_0)\, |\varphi_\lambda(Z_{n,k})|\, \chi_{\{|u_n|<k\}}\to 0
\ \mbox{in } L^1(\Omega)\ \mbox{as }n\to \infty.  \end{equation} 
From \eqref{alb1} and \eqref{alb2}, we find that $\lim_{n\to \infty} X_k(n)=0$. 
Remark that $|\varphi_\lambda'(Z_{n,k})|$ is bounded above by a constant independent of $n$ and $\chi_{\{|u_0|<k\}}\,
\chi_{\{|u_n|\geq k\} }\to 0$ a.e. in $\Omega$ as $n\to \infty$. Hence, we can use a similar argument as for $X_k(n)$, to obtain that, up to a subsequence of $\{u_n\}$, 
$\lim_{n\to \infty} Y_k(n)=0$. 
From \eqref{veb}, we conclude \eqref{nica} with $S_k(n)=X_k(n)+Y_k(n)+\ell_{n,k}$. 
\end{proof}

\medskip

{\bf Acknowledgements.} The first author has been supported by the Sydney Mathematical Research Institute International Visitor Program (August--September 2019) and by Programma di Scambi Internazionali dell'Universit\`a degli Studi di Napoli Federico II. The research of the second author is supported by Australian Research Council under the Discovery Project Scheme (DP190102948 and DP220101816).


\medskip

\end{document}